\documentclass[ejsv2,preprint,noshowframe,linksfromyear]{imsart}
\usepackage[authoryear,round]{natbib}
\usepackage{amsmath,amsthm,mathtools}
\usepackage{microtype,booktabs,array}
\usepackage{xcolor,graphicx}
\usepackage{refcount}
\usepackage[colorlinks=true,linkcolor=blue!45!black,citecolor=blue!45!black,urlcolor=blue!45!black]{hyperref}
\usepackage{bookmark}
\newcommand{\papertitle}{Minimax Slope Estimation and Support Recovery under Weak Gaussian Selection}
\hypersetup{pdftitle={Minimax Slope Estimation and Support Recovery under Weak Gaussian Selection},pdfauthor={Zeen Song}}
\startlocaldefs
\newtheorem{theorem}{Theorem}[section]
\newtheorem{lemma}[theorem]{Lemma}
\newtheorem{proposition}[theorem]{Proposition}
\newtheorem{corollary}[theorem]{Corollary}
\theoremstyle{definition}
\newtheorem{assumption}[theorem]{Assumption}
\theoremstyle{remark}
\newtheorem{remark}[theorem]{Remark}
\newcommand{\R}{\mathbb R}
\newcommand{\E}{\mathbb E}
\newcommand{\Prob}{\mathbb P}
\newcommand{\norm}[1]{\left\lVert#1\right\rVert}

\DeclareMathOperator{\TV}{TV}
\DeclareMathOperator{\diam}{diam}
\DeclareMathOperator{\Var}{Var}
\makeatletter
\newcommand{\aliaslabel}[2]{%
  \protected@write\@auxout{}{%
    \string\newlabel{#1}{{\getrefnumber{#2}}{\getpagerefnumber{#2}}{}{\getrefbykeydefault{#2}{anchor}{}}{}}}}
\makeatother

\endlocaldefs
\allowdisplaybreaks[2]
\setpkgattr{tablename}{size}{\upshape\scshape}
\setpkgattr{figurename}{size}{\upshape\scshape}

\begin{document}
\begin{frontmatter}
\pdfsubject{Preprint}
\title{\papertitle}
\runtitle{Slope estimation under weak Gaussian selection}
% Author name, affiliation and email supplied by the author on 2026-10-07.
\begin{aug}
\author[A]{\fnms{Zeen}~\snm{Song}\ead[label=e1]{zeensong17@gmail.com}}
\address[A]{School of Mathematics, Statistics and Mechanics,
Beijing University of Technology\printead[presep={,\ }]{e1}}
\runauthor{Z. Song}
\end{aug}

\begin{abstract}
In sample selection models, identifying a regression slope does not determine how accurately it can be estimated or distinguished from zero. We establish sharp uniform limits for outcome slopes in a fixed-dimensional Gaussian/probit selection model with unknown scale and an unknown covariate density bounded above and away from zero on a cube. We prove a uniform inverse bound between slope differences and observed-law Hellinger distance, allowing every nuisance parameter to differ between the two models. A matching path lies entirely within positive-correlation nonignorable selection. On parameter classes containing a feasible weak-selection neighborhood, the minimax squared slope risk is of order $n^{-2/3}$. For each prespecified minimum-signal sequence $b_n$, uniform exact support recovery is possible if and only if $n^{1/3}b_n\to\infty$, with rules allowed to depend on that sequence. The same estimator attains uniform root-$n$ slope accuracy on $\|\gamma_s\|\le M n^{-1/3}$ for each fixed finite $M\ge0$, including the exact zero-selection-slope layer. A testing-based construction establishes statistical attainability without estimating the covariate density; computational efficiency is not asserted.
\end{abstract}
\begin{keyword}[class=MSC]
\kwdgroup[type=primary]{\kwd{62D10}\kwd{62C20}}
\kwdgroup[type=secondary]{\kwd{62F12}\kwd{62J05}}
\end{keyword}
\begin{keyword}
\kwd{Nonignorable missingness}
\kwd{sample selection}
\kwd{Hellinger distance}
\kwd{minimax rates}
\kwd{support recovery}
\kwd{singular information}
\end{keyword}
\end{frontmatter}

\section{Introduction}
When an outcome is observed only for a nonrandom subset of a population, identifying its regression coefficients is only the first step toward reliable inference. A coefficient may be uniquely determined by the observed distribution and still be difficult to distinguish from zero. In Gaussian sample selection, changes in the outcome mean, noise scale, and selection mechanism can compensate one another. We ask how small an outcome slope can be estimated uniformly, and how large a nonzero slope must be for its presence to be recovered reliably.

The distinction between an exact parameter layer and its neighborhood is central. At zero marginal selection slope, selection shifts the respondent intercept but leaves its outcome slopes unchanged. Our full-model estimator has uniform root-$n$ slope accuracy on that layer, including zero correlation, and on $\|\gamma_s\|\le Mn^{-1/3}$ for every fixed finite $M\ge0$ (Corollary~\ref{cor:transition-upper}). Yet joint movement of selection slopes and correlation can conceal different outcome supports, giving worst-case squared slope risk of order $n^{-2/3}$ over a weak neighborhood. A related distinction between pointwise and local minimax rates appears in finite-mixture estimation \citep{HeinrichKahn2018}; here both outcome and nuisance parameters may move.

We study a fixed-dimensional Gaussian/probit experiment in which the noise scale is free and bounded, and the covariate density is unknown but bounded above and away from zero on a cube. Weakness here concerns the joint approach of $a=\sigma\rho$ and the marginal selection slope $g=\gamma_s$ to zero; the marginal response probabilities $\Prob(R=1\mid X)$ remain uniformly bounded away from zero. The main results follow a single chain. First, a uniform inverse bound controls the outcome-slope difference by the two-thirds power of the observed-law Hellinger distance, allowing all nuisance coordinates to move at both endpoints. Second, an explicit pair of paths with positive correlation matches this bound. Third, a testing-based estimator attains squared Hellinger risk at most $C/n$ without using the design density. These facts give the minimax squared slope rate and the exact signal order for uniform support recovery.

The matching paths remain strictly MNAR with the same correlation sign; their scale changes compensate second-order respondent variance. For each prespecified minimum-signal sequence $b_n$, uniform exact support recovery is possible precisely when $n^{1/3}b_n\to\infty$, with the rule allowed to use that sequence. This boundary concerns the observed experiment. Section~\ref{sec:estimation} gives the statistical construction, without asserting computational efficiency or adaptation to an unspecified separation sequence.

\paragraph{Related work.}
Singular sample-selection likelihoods and higher-order rates have a long history \citep[Sections 2.1, 5.1, and 6]{LeeChesher1986}; see also \citet[Sections 4.3--4.5]{RCBR2000}. \citet[Section 3.1 and Appendix A]{JinLee2018} use $r=\rho^3$ for singular likelihood estimation and group selection. At their fixed singular endpoint, non-intercept outcome slopes are root-$n$, the $n^{-1/3}$ rate concerns the variance parameter $\sigma^2$, and the penalized group consists of selection slopes and correlation. Our support lower bound instead compares two moving endpoints with different outcome supports. Cubic and centered skew-normal coordinates also precede this analysis \citep[Section 3.2 and Section 6.1]{HallinLey2014}. \citet{HoNguyen2019} study singularity structures and coordinate-dependent rates in finite mixtures, including skew-normal mixtures. These provide geometric context for the complete selection experiment studied here.

Hellinger geometry and testing-based estimation provide the methodological basis. \citet[Propositions 1--2]{BeckertMcFadden2010} relate moduli to uniform rates, with additional LAQ and estimator conditions for attainability. Our construction follows Birg\'e's $T$-estimator architecture \citep[Sections 4.2 and 5]{Birge2006} and local metric-dimension ideas \citep[Proposition 15 and Corollary 16]{BBS2017}. \citet{DoDoNguyen2025} derive conditional-law inverse bounds for regression mixtures, a different observed experiment. General conditional-distribution estimation with unknown design is treated by \citet{BaraudBirge2018}; conditional-integral corrections in candidate comparisons appear in \citet{Sart2017}. Here uniform density-coordinate and local packing bounds across the singular region make these testing ideas yield $C/n$ squared Hellinger risk, which combines with the slope geometry and matching path.

Related selection procedures include adaptive Lasso under positive definite information \citep[Section 4.3, C4 and Theorems 1--2]{Ogundimu2022} and Bayesian spike-and-slab selection \citep{IqbalEtAl2026}. \citet{LeebPotscher2008} emphasize that pointwise oracle properties need not control uniform risk; our support boundary quantifies this distinction over shrinking signals. Uniform inference near singularity is studied by \citet[Sections 3--4 and 6]{Bottai2003} for scalar models, including informative nonresponse, and by \citet[Theorem 2.4]{EkvallBottai2022} under their coverage conditions. \citet{HanMcCloskey2019} analyze extremum/Wald inference across identification strengths; their sample-selection example uses control-function moments. \citet{KhanNekipelov2024} study a semiparametric intercept under different design and tail conditions. These targets differ from the complete-law, shrinking-separation slope-support problem here. Our secondary confidence result is unconditional and simultaneous.

\citet{MaEtAl2026} study normal regression under a mixture of MAR and unrestricted MNAR response mechanisms. The regression result in the arXiv version \citep[version 2, Section 5, Theorem 17]{MaEtAl2026Preprint} uses a specified noise scale and gives a high-probability squared Euclidean error bound of order $1/\log n$ when the remaining parameters are fixed; its conditions also allow dimension growth. The positive MAR component imposes a lower bound on response probabilities conditional on the outcome and covariates. For nonzero correlation, the probit response probability in our model approaches zero in one outcome tail. The mechanism classes therefore do not support a direct rate ranking. Our targets are uniform squared slope risk and a shrinking-separation support boundary with free scale.

Section~\ref{sec:model} specifies the experiment. Sections~\ref{sec:geometry}--\ref{sec:main-support} develop the geometry, slope rates, and support boundary; Section~\ref{sec:restrictions} summarizes the restricted regimes. Section~\ref{sec:numerical} illustrates the separation mechanism. Appendices A--G contain all proofs and the detailed secondary results; Appendices~\ref{app:numerical}--\ref{app:additional-experiments} record the numerical protocols and complete results.

\section{The observed experiment and the parameter class}\label{sec:model}
Fix $d\ge1$, let $D=[-1,1]^d$, and let $\nu$ be normalized uniform measure on $D$. Write $z(x)=(1,x^T)^T$. The latent model is
\begin{equation}\label{eq:latent}
 Y=\beta_0+\beta_s^TX+\sigma\varepsilon,
 \qquad R=\mathbf1\{\gamma_0+\gamma_s^TX+U>0\},
\end{equation}
where $(\varepsilon,U)$ is jointly standard Gaussian with correlation $\rho$, independently of $X$. Set $m(x)=\beta_0+\beta_s^Tx$ and $t(x)=\gamma_0+\gamma_s^Tx$. The observation is $(X,O)$, with
\[
 O\in\mathcal O=\{\star\}\mathbin{\dot\cup}(\{1\}\times\R),
 \qquad O=\star\text{ if }R=0,\quad O=(1,Y)\text{ if }R=1.
\]
Thus $\star$ denotes nonresponse, not a zero outcome. Relative to $\mu$, the sum of unit mass at $\star$ and Lebesgue measure on the respondent branch, the conditional density is
\begin{align}
 Q_{\theta,0}(x)&=\Phi(-t(x)),\label{eq:atom}\\
 Q_{\theta,1}(y\mid x)&=\frac1\sigma\phi(e)
  \Phi\!\left(\frac{t(x)+\rho e}{\sqrt{1-\rho^2}}\right),
  \qquad e=\frac{y-m(x)}\sigma .\label{eq:respondent}
\end{align}
The respondent component integrates to $\Phi(t(x))$. Its normalization
$p_\theta(y\mid x,R=1)=Q_{\theta,1}(y\mid x)/\Phi(t(x))$ is a probability density. All statistics below use the atom and observed respondent outcomes only.

Let $\Theta$ be a fixed closed rectangular coefficient class for $\beta,\gamma$, together with
\[
 0<\sigma_-\le\sigma\le\sigma_+<\infty,
 \qquad |\rho|\le\rho_+<1.
\]
The intervals may be nonsymmetric. All bounds and $d$ are independent of $n$. The class may contain $\rho=0$; no information lower bound is imposed. The design law is $f\,d\nu$, where
\[
 \mathcal F_{\ell,u}=\{f:\textstyle\int f\,d\nu=1,
             \ 0<\ell\le f\le u<\infty\},\qquad \ell\le1\le u.
\]
The measurable density $f$ need not be smooth or known. An iid experiment uses one fixed $f$, and every pairwise comparison uses the same $f$. Write $P_\theta^f(dx,do)=f(x)Q_\theta(o\mid x)\nu(dx)\mu(do)$. We use the convention without a factor $1/2$:
\begin{align}
 H_f^2(\theta,\theta')&=\int_D f(x)\int_{\mathcal O}
       (\sqrt{Q_\theta}-\sqrt{Q_{\theta'}})^2\,d\mu\,d\nu,\label{eq:hellinger}\\
 h_\nu&=H_1,\qquad
 \sqrt\ell\,h_\nu\le H_f\le\sqrt u\,h_\nu.\label{eq:design-comparison}
\end{align}
In particular, $H_f\in[0,\sqrt2]$. A loss $H_f(\widehat\theta,\theta)$ evaluates the estimated conditional law with the actual common $f$; it does not mean that $\widehat\theta$ estimates the arbitrary marginal density of $X$.

Constants $C,c,A$ may change between displays and depend only on $d,\ell,u$, the coefficient intervals, $\sigma_-,\sigma_+$ and $\rho_+$, not on $n$, the shape of $f$, or the endpoints. Subclass constants may also depend on the stated fixed strength.

\citet[Theorem 2(c) and Example 4]{MiaoDingGeng2015} establish normal/probit identification and its relation to the Heckman parameterization. The following cumulant argument fixes our notation and is self-contained.

Write
\begin{equation}\label{eq:cumulants}
 \begin{split}
 a&=\sigma\rho,\quad g=\gamma_s,\quad t_0=\gamma_0,
 \quad\lambda=\phi/\Phi,\quad h=-\lambda',\quad\kappa=\lambda'',\\
 M(x)&=m(x)+a\lambda(t(x)),\quad
 V(x)=\sigma^2-a^2h(t(x)),\quad K_3(x)=a^3\kappa(t(x)).
 \end{split}
\end{equation}
These are the first three respondent cumulants; Appendix~\ref{app:geometry} derives them and proves $0<h<1$ and $\kappa>0$. The response probability identifies $t$ and, by the full-cube design, $\gamma$. Then $K_3/\kappa(t)$ identifies $a^3$ and its real cube root $a$; $M$ identifies $\beta$; and $V$ identifies $\sigma^2$, hence $\rho=a/\sigma$. This argument assumes, rather than validates, the exact Gaussian/probit model.

The support and its separated class are
\begin{equation}\label{eq:support-class}
 \begin{split}
 S(\theta)&=\{j\in\{1,\ldots,d\}:\beta_j\ne0\},\\
 \Theta(b)&=\{\theta\in\Theta:\ \beta_j=0\text{ or }|\beta_j|\ge b
                                  \text{ for every }j\ge1\}.
 \end{split}
\end{equation}
This definition includes empty support. Matching lower bounds need the following additional feasible-neighborhood condition; upper bounds do not.

\begin{assumption}[Natural weak neighborhood]\label{ass:natural}
There exist an interior scale $\sigma_0\in(\sigma_-,\sigma_+)$ and an interior selection intercept $t_0$ such that $\Theta$ contains a fixed neighborhood of
\[
 (\beta,\gamma,\sigma,\rho)=(0,(t_0,0),\sigma_0,0).
\]
In particular, the outcome coefficients and selection slopes admit two-sided neighborhoods of zero, and $\rho_+>0$. For a known-sign class, the intersection of this neighborhood with the corresponding closed correlation half-space suffices.
\end{assumption}
For sharpness, containment of the explicit paths below suffices. A smaller compact subclass need not admit a matching lower bound.

\section{Outcome-slope geometry}\label{sec:geometry}
\begin{theorem}[All-pairs cubic inverse modulus]\label{thm:modulus}
For every $f\in\mathcal F_{\ell,u}$ and every $\theta,\theta'\in\Theta$,
\begin{equation}\label{eq:main-modulus}
 \norm{\beta_s-\beta'_s}\le C H_f(\theta,\theta')^{2/3},
 \qquad H_f^2(\theta,\theta')\ge c\norm{\beta_s-\beta'_s}^{3}.
\end{equation}
All nuisance parameters may differ at the two endpoints. If one endpoint has $\norm{g}\ge g_0>0$ or $|a|\ge a_0>0$, then the stronger bound
$\norm{\beta_s-\beta'_s}\le C(g_0,a_0)H_f(\theta,\theta')$ holds against all competitors in $\Theta$.
\end{theorem}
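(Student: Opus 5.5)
First, I will reduce to the uniform design and to a finite-dimensional functional of the observed law. By \eqref{eq:design-comparison} it suffices to prove the bounds with $h_\nu$ in place of $H_f$, at the cost of $\ell$-dependent constants. For each $x$, the Hellinger distance between the conditional laws $Q_\theta(\cdot\mid x)$ and $Q_{\theta'}(\cdot\mid x)$ controls a few quantities: the difference of response probabilities $|\Phi(t(x))-\Phi(t'(x))|$, and the differences of the first three respondent moments $M,V,K_3$ of \eqref{eq:cumulants}. The moment step uses Gaussian-type tails uniform over $\Theta$, so that $|\int \psi\,(dP-dP')|\le C_\psi\, h(P,P')$ for polynomially growing $\psi$ by Cauchy--Schwarz on $\sqrt{p}\pm\sqrt{p'}$. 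Integrating over $x\in D$ then gives $L^2(\nu)$ control of $t-t'$, $M-M'$, $V-V'$ and $K_3-K_3'$ by $C h_\nu$. These are polynomial or smooth functions of $x$ with bounded coefficients, and norm equivalence on finite-dimensional function spaces over the full cube converts this into coefficient control: $|\gamma-\gamma'|\le Ch$ with $h=h_\nu(\theta,\theta')$.

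Next comes the algebra linking $\beta_s$ to these coordinates. The gradient of $M$ is $\nabla M=\beta_s+a\lambda'(t(x))g$, so on $D$ we have $\beta_s-\beta'_s=\nabla(M-M')-\bigl(a\lambda'(t)g-a'\lambda'(t')g'\bigr)$. The first term is $O(h)$ after inverse inequalities for the smooth family, although $M$ is not polynomial; I will handle this by working with the finite Taylor structure in $x$, or by testing against fixed smooth functions and integrating by parts. Because $g-g'$ and $t-t'$ are already $O(h)$, the nuisance term reduces to $|a-a'|\,\|g\|+O(h)$. The task is therefore to bound $\Delta:=|a-a'|\min(\|g\|,\|g'\|)$, with a symmetric choice of endpoint.

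For the cubic modulus, the third cumulant gives $|a^3\kappa(t)-a'^3\kappa(t')|\le Ch$ in $L^2(\nu)$. Since $\kappa$ is bounded below on the compact range of $t$ and $t-t'=O(h)$, this yields $|a^3-a'^3|\le Ch$, hence $|a-a'|\le C h^{1/3}$. That alone gives only $\|\beta_s-\beta'_s\|\lesssim h^{1/3}$, so the main step must combine the cumulants in a sharper way. Write $\delta=|a-a'|$ and $G=\max(\|g\|,\|g'\|)$. I see two regimes.

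If $G\le h^{1/3}$, the bound $\delta G\le Ch^{2/3}$ follows immediately. If $G>h^{1/3}$, then the mean derivative in $x$ along the direction of $g$ carries a curvature term $a\lambda''(t)\,g g^T$. Comparing second derivatives of $M-M'$, which are $O(h)$ in the same sense as above, gives $|a\|g\|^2-a'\|g'\|^2|\lesssim h$. Together with $\|g-g'\|\le Ch$ this gives $\delta G^2\lesssim h+|a|G\,h$, so $\delta G\lesssim h/G+h\lesssim h^{2/3}$.

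The main obstacle is that the second-derivative comparison needs $\lambda''=\kappa>0$ bounded below and a clean derivative extraction from an $L^2$ bound on a non-polynomial function. I would handle this by expanding $M(x)$ along lines in the cube and using the fact that $\{1,s,\lambda(t_0+s\,c)\}$-type families have uniformly nondegenerate Gram matrices. Where the variance cumulant $V$ is needed to pin down $a^2$ against $\sigma$, I would use it in the same way.

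The strong case follows directly. If $\|g\|\ge g_0$, the curvature identity gives $\delta\lesssim h/g_0^2$. If $|a|\ge a_0$, then $|a^3-a'^3|\ge c\,a_0^2\delta$ gives $\delta\lesssim h/a_0^2$. In either case $\|\beta_s-\beta'_s\|\le C(g_0,a_0)h$.
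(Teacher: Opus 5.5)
Your proposal is correct and follows the paper's proof. The paper likewise takes $|a-a'|^3\le CH$ from the third cumulant. It gets $|a-a'|\,\norm{g}^2\le CH$ by testing against $\psi''(x_j)$, with $\psi(z)=(1-z^2)^2$, and integrating by parts twice; this is your weak second derivative, taken along a coordinate $j$ with $g_j^2\ge\norm{g}^2/d$, which is easier on the cube than the direction of $g$. It bounds $\norm{\beta_s-\beta_s'}\le C(H+|a-a'|\,\norm{g})$ through the affine projection. Finally it combines these via $vG\le\sqrt{v\cdot vG^2}$, which is equivalent to your case split at $G=h^{1/3}$. The only bookkeeping difference is that the paper first forms $B=m-m'+(a-a')\lambda(t)$, so a single selection index appears. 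You instead absorb $\kappa(t)-\kappa(t')$ and $g-g'$ as $O(h)$ perturbations after testing, which also works.

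Drop the alternative justification via "uniformly nondegenerate Gram matrices" of $\{1,s,\lambda(t_0+sc)\}$; that claim is false. The nonaffine residual of $\lambda(t_0+sc)$ has norm of order $c^2$ and vanishes at $c=0$. Uniform nondegeneracy would give $|a-a'|\le Ch$, which the matching path of Theorem~\ref{thm:hardpath} violates: there $|a_2-a_1|=r$ while $H_f\asymp r^3$. The quantitative lower bound $\ge c\,\kappa_*\norm{g}^2$ on that residual is what your integration-by-parts test supplies. Rely on that route, together with the strict positivity $\kappa>0$ proved in Lemma~\ref{lem:curvature}.
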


The source of the exponent is explicit. With $H=H_f$, $v=|a-a'|$ and $G=\norm{g}$, observed moments and the strictly positive curvature of the inverse Mills ratio give
\begin{equation}\label{eq:geometry-mechanism}
 v^3\le CH,\qquad vG^2\le CH,\qquad
 \norm{\beta_s-\beta'_s}\le C(H+vG).
\end{equation}
Thus $vG\le\sqrt{v\,vG^2}\le CH^{2/3}$. At an endpoint with $G=0$, the slope bound is linear against every competitor, yielding Corollary~\ref{cor:zero-selection-slope}. Appendix~\ref{app:geometry} proves \eqref{eq:geometry-mechanism} with both endpoints unrestricted.

\begin{theorem}[Independent same-sign matching path]\label{thm:hardpath}
Under Assumption~\ref{ass:natural}, put $h_0=h(t_0)>0$ and, for $k=1,2$ and sufficiently small $r>0$, set
\begin{equation}\label{eq:hardpath}
 \begin{split}
 t_r(x)&=t_0+rx_1,\qquad L_r(x)=\lambda(t_0)+\lambda'(t_0)rx_1,\\
 a_k&=kr,\qquad \sigma_k^2=\sigma_0^2+k^2r^2h_0,
 \qquad \rho_k=a_k/\sigma_k,\qquad m_k(x)=(1-k)rL_r(x).
 \end{split}
\end{equation}
Both endpoints belong to the fixed class and have $\rho_k>0$. Their supports are $\varnothing$ and $\{1\}$, and uniformly over the common $f$,
\[
 \beta_{1,2}-\beta_{1,1}=h_0r^2,\qquad
 c r^3\le H_f(\theta_{1,r},\theta_{2,r})\le C r^3.
\]
Thus for each freely varying slope coordinate $j$ the modulus
$\sup_{H_f(\theta,\theta')\le\delta}|\beta_j-\beta'_j|$
has order $\delta^{2/3}$ as $\delta\downarrow0$.
\end{theorem}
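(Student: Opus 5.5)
The proof rests on four facts. Membership and the slope gap are direct computations. The lower Hellinger bound comes for free from Theorem~\ref{thm:modulus}. The upper bound $H_f\le Cr^3$ is the real content, and it reduces, pointwise in $x$, to matching the respondent laws to second order in $r$.

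\emph{Membership, slopes and supports.} As $r\downarrow0$, $\sigma_k^2\to\sigma_0^2$ and $\rho_k=kr/\sigma_k\to0^+$. The selection coefficients are $(t_0,r e_1)$, and the outcome coefficients are $O(r)$. Hence both endpoints lie in the neighborhood of Assumption~\ref{ass:natural} (in the $\rho\ge0$ half-space for a known-sign class) and satisfy $\rho_k>0$.

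Since $m_k(x)=(1-k)r\lambda(t_0)+(1-k)r^2\lambda'(t_0)x_1$:
\begin{itemize}
\item $\beta_{s,1}=0$, so $S(\theta_{1,r})=\varnothing$;
\item $\beta_{1,2}=-r^2\lambda'(t_0)=h_0r^2\neq0$, with all other slopes zero, so $S(\theta_{2,r})=\{1\}$.
\end{itemize}

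\emph{Lower bound.} By Theorem~\ref{thm:modulus}, $H_f^2\ge c\norm{\beta_s-\beta'_s}^3=c h_0^3r^6$, so $H_f\ge cr^3$ uniformly in $f$.

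\emph{Upper bound: cumulant matching.} By \eqref{eq:design-comparison} it suffices to bound $h_\nu$. The atoms \eqref{eq:atom} coincide because $t$ is common, so only the respondent branch matters, uniformly over $x\in D$. Using \eqref{eq:cumulants} and $\lambda(t_r)-L_r=O(r^2)$ uniformly on $D$:
\begin{itemize}
\item $M_k=rL_r+kr(\lambda(t_r)-L_r)=rL_r+O(r^3)$;
\item $V_k=\sigma_0^2+k^2r^2(h_0-h(t_r))=\sigma_0^2+O(r^3)$;
\item $K_{3,k}=O(r^3)$.
\end{itemize}
So the two endpoints differ by $O(r^3)$ in the first three cumulants. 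To turn this into an $O(r^3)$ bound on the density, I would use the cumulant generating function of the unnormalized respondent measure,
\[
\psi_k(s)=s m_k+\tfrac12 s^2\sigma_k^2+\log\Phi(t_r+a_k s).
\]
Expanding in powers of $r$ with $s$ in compacts, the terms of orders $r$ and $r^2$ agree between $k=1$ and $k=2$; this is exactly the computation above. Correspondingly, I would Taylor expand $Q_{\theta_k,1}(y\mid x)$ jointly in $r$ around $\phi_{\sigma_0}(y)\Phi(t_r)$. The order-$r$ and order-$r^2$ coefficients are of the form (polynomial in $y$)$\times\phi_{\sigma_0}(y)$. Such functions are determined by their Laplace transforms, so matching of $\psi_k$ to order $r^2$ forces these coefficients to coincide. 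What remains is $r^3R_k(y,x)$ with an envelope $|R_k|\le C(1+|y|)^{p}\phi_{\sigma_0'}(y)$, where $\sigma_0'$ is slightly larger than $\sigma_0$.

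\emph{Main obstacle: from an $O(r^3)$ density difference to $H\le Cr^3$.} This requires the squared Hellinger distance to be $O(r^6)$. The naive bound $H^2\le\int|Q_1-Q_2|$ only gives $O(r^3)$, and the $\chi^2$ bound needs a lower bound on $Q_{\theta_1,1}$. That lower bound fails in the tail $e\to-\infty$, because the factor $\Phi\bigl((t+\rho e)/\sqrt{1-\rho^2}\bigr)$ vanishes there.

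I would split at $|y|\le K\sqrt{\log(1/r)}$.
\begin{itemize}
\item \emph{Inner region.} Since $\rho_k=O(r)$, the probit factor satisfies $\Phi(\cdot)\ge\tfrac12\Phi(t_0)$ on this region for small $r$. Then $\int(\sqrt{Q_1}-\sqrt{Q_2})^2\le\int(Q_1-Q_2)^2/Q_1$. The Gaussian envelope of $R_k$ against $\phi_{\sigma_0}$ lower bounds gives $Cr^6$, after choosing the envelope variance so that $\phi_{\sigma_0'}^2/\phi_{\sigma_0}$ stays integrable.
\item \emph{Tails.} Each of $Q_1$ and $Q_2$ has mass $\le C\exp(-K^2\log(1/r)/(4\sigma_+^2))\le r^6$ for $K$ large, so the tails contribute $O(r^6)$.
\end{itemize}
This yields $h_\nu\le Cr^3$, hence $H_f\le\sqrt u\,Cr^3$.

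\emph{Modulus.} Given $\delta$, choose $r=(\delta/C)^{1/3}$. The path then exhibits $|\beta_1-\beta'_1|=h_0r^2\asymp\delta^{2/3}$ with $H_f\le\delta$. The reverse inequality is \eqref{eq:main-modulus}. For a general freely varying coordinate $j$, replace $x_1$ by $x_j$ in \eqref{eq:hardpath}.
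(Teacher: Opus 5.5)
Your proof is correct, and its core matches the paper's. Both use second-order agreement in $r$ of the unnormalized respondent transforms, transform uniqueness to identify the $r$-Taylor coefficients of orders $0,1,2$, and a third-order remainder. Your appeal to the cumulant computation is legitimate here: for $j=1,2$ the $r^j$ coefficient of $\psi_k$ is a polynomial of degree at most $j$ in $s$, so mean and variance agreement to $O(r^3)$ suffices.

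The routes differ in how the Hellinger bounds are obtained.

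\emph{Lower bound.} You cite Theorem~\ref{thm:modulus} with the gap $h_0r^2$. The paper instead compares third cumulants directly, $K_{3,2}-K_{3,1}=7r^3\kappa(t_r)$, through Lemma~\ref{lem:moments}. Neither is circular, since Appendix~\ref{app:geometry} proves Theorem~\ref{thm:modulus} from those same lemmas.

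\emph{Upper bound.} The paper shows that $r\mapsto\sqrt{q_k(r)}$ is $C^3$ into $L^2$ with a common envelope. This requires polynomial bounds on $\partial_r^j\log q_k$, and hence the Mills-ratio growth bound $\lambda(w)\le C(1+|w|)$ for $w<0$. It then passes the coefficient agreement to the root density by the chain rule at the positive base $\Phi(t_0)\sigma_0^{-1}\phi(y/\sigma_0)$, and applies Taylor's formula in Hilbert space.

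You instead expand the density itself, which needs only Gaussian envelopes for $\partial_r^3Q_k$. You handle the vanishing probit tail by truncation:
on $|y|\le K\sqrt{\log(1/r)}$, the path-specific fact $\rho_k=O(r)$ keeps the probit factor above $\Phi(t_0)/2$, so the $\chi^2$ bound gives $O(r^6)$;
outside that region, the Gaussian tails contribute $O(r^6)$.

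\emph{Trade-off.} Your route needs no regularity of the square-root density, but it exploits $\rho_k\to0$ along the path. The paper's root-density regularity does not depend on that. It also delivers, in the same stroke, the $L^2$ limit \eqref{eq:path-root-coefficient} behind the constant in \eqref{eq:path-limit}.

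In a write-up, say explicitly that your envelope multiplied by $e^{sy}$ justifies differentiating the Laplace integral in $r$. That interchange is what links the $r$-expansion of $e^{\psi_k(s)}$ to the transforms of the density coefficients.
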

The displayed path uses positive correlation. For a class restricted to negative correlation, Proposition~\ref{prop:sign} gives the corresponding reflected path. The scale changes compensate second-order respondent variance. Appendix~\ref{app:hardpath} proves the distance bound using transform uniqueness and a common third-derivative envelope, not merely matching moments, independently of the estimator and its density chart. With $r\asymp n^{-1/6}$, the path has scales
\[
 |a_k|\asymp\|g\|\asymp n^{-1/6},\qquad
 \|\beta_{s,2}-\beta_{s,1}\|\asymp n^{-1/3},\qquad
 H_f(\theta_{1,r},\theta_{2,r})\asymp n^{-1/2}.
\]

\begin{figure}[tbp]
 \centering
 \includegraphics[width=\linewidth]{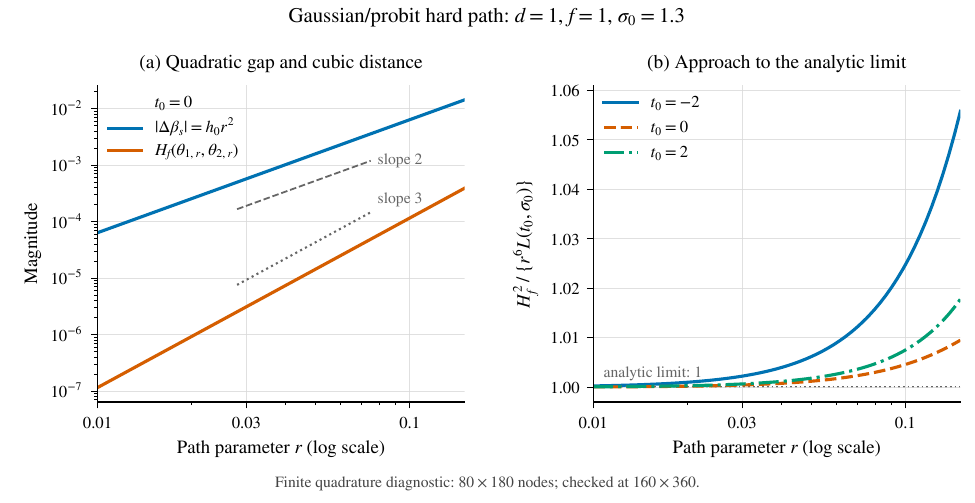}
 \caption{The same-sign path in \eqref{eq:hardpath}, with $d=1$, $f=1$ and $\sigma_0=1.3$. Left: the slope gap and Hellinger distance at $t_0=0$, with quadratic and cubic reference slopes. Right: $H_1^2/(r^6L)$ for three selection intercepts, where the positive coefficient $L=L(t_0,\sigma_0)$ is derived in \eqref{eq:path-limit}. The ratio approaches one as $r$ decreases. Numerical quadrature illustrates the analytic expansion; it supplies neither a proof nor an optimal global constant.}
 \label{fig:hard-path}
\end{figure}

\section{Observable estimation and slope rates}\label{sec:estimation}
We give the statistical construction before stating its risk bounds. The full proofs of the coordinate comparisons, local packing, and pairwise tests are in Appendices~\ref{app:chart}--\ref{app:testing}.

\subsection{Coordinates for the observed laws}\label{sec:finite-coordinates}
The singularity of the original parameters does not prevent a finite-dimensional description of distances between observed laws. In addition to $a,g,t_0$ from \eqref{eq:cumulants}, define
\begin{align}
 \eta_0&=\beta_0+a\lambda(t_0),&
 \eta_s&=\beta_s+a\lambda'(t_0)g,\notag\\
 v_0&=\sigma^2-a^2h(t_0),&
 q&=a^3,\qquad U_2=agg^T,\qquad Z=a^2g.\label{eq:overview-coordinates}
\end{align}
Here $\eta_0$ and $\eta_s$ are the value and gradient of the respondent mean at $x=0$, $v_0$ is its variance there, and $q$ records the cubic asymmetry. The coordinates $U_2$ and $Z$ retain the covariate dependence of the mean curvature and variance. Both are needed when the selection index varies.

Let $\operatorname{vech}$ list the upper triangular entries, including the diagonal, in a fixed order, and set
\begin{equation}\label{eq:overview-F}
 F(\theta)=(\gamma,\eta_0,\eta_s,v_0,q,\operatorname{vech}(U_2),Z)\in\R^K,
 \qquad K=3d+4+\frac{d(d+1)}2.
\end{equation}
Lemma~\ref{lem:embedding} proves the uniform comparisons
\begin{equation}\label{eq:overview-metric}
 c_E\|F(\theta)-F(\theta')\|_2
 \le h_\nu(\theta,\theta')
 \le C_E\|F(\theta)-F(\theta')\|_2.
\end{equation}
These hold across zero correlation as well as away from it. They concern conditional observed laws with the fixed reference design measure $\nu$; they do not embed or estimate the unrestricted marginal density $f$.

Set $\varepsilon=n^{-1/2}$. Lemma~\ref{lem:packing} constructs a deterministic, ordered set $\mathcal N_\varepsilon=\{\theta_1,\ldots,\theta_N\}\subset\Theta$ that is $\varepsilon$ separated and a $2\varepsilon$ cover in $h_\nu$. It starts with a sufficiently fine rectangular parameter grid and retains points in fixed order whenever their distance from every previously retained point exceeds $\varepsilon$. Every retained point is a legal original parameter. The coordinate comparison gives
\begin{equation}\label{eq:overview-packing}
 \#\{j:h_\nu(\theta_j,\vartheta)\le R\}
 \le (1+C_P R/\varepsilon)^K,
 \qquad \vartheta\in\Theta,\quad R\ge0.
\end{equation}
This local polynomial count is stronger than the existence of a finite global cover and is the input needed for the risk bound below.

\subsection{An observable finite-net estimator}\label{sec:estimator-construction}
The construction uses the $T$-estimator architecture of \citet{Birge2006}. For candidates $i,j$, write $Q_i=Q_{\theta_i}$ and define
\begin{align}
 \psi_{ij}(x,o)&=
 \frac{\sqrt{Q_j(o\mid x)}-\sqrt{Q_i(o\mid x)}}
      {\sqrt{Q_j(o\mid x)}+\sqrt{Q_i(o\mid x)}},\notag\\
 c_{ij}(x)&=\frac12\int_{\mathcal O}
       \{Q_i(o\mid x)+Q_j(o\mid x)\}\psi_{ij}(x,o)\,d\mu(o),\notag\\
 Z_{ij}(x,o)&=\psi_{ij}(x,o)-c_{ij}(x).\label{eq:overview-test}
\end{align}
The conditional integral is over the nonresponse atom and respondent line. It is calculated from the candidate laws at the observed $x$, so neither an unobserved outcome nor the design density enters the statistic. For a fixed orientation of each unordered pair, candidate $j$ defeats $i$ when $\sum_{r=1}^n Z_{ij}(X_r,O_r)>0$; the opposite sign favors $i$, and equality uses a predetermined winner. Conditional centering gives
\[
 \E_i^f Z_{ij}=-H_f^2(\theta_i,\theta_j)/2,
 \qquad \E_j^f Z_{ij}=H_f^2(\theta_i,\theta_j)/2
\]
for every allowed common $f$. Lemma~\ref{lem:pairtest} also controls the comparison when the truth is close to a candidate rather than exactly equal to it.

After all unordered pairs have been compared, set
\begin{equation}\label{eq:overview-estimator}
 D_i=\max\bigl(\{h_\nu(\theta_i,\theta_j):j\text{ defeats }i\}\cup\{0\}\bigr),
 \qquad
 \widehat\theta_n=\theta_{\min\operatorname*{arg\,min}_{1\le i\le N}D_i}.
\end{equation}
Thus the estimator minimizes the distance to its most distant defeating candidate, with a fixed rule for ties. It is Borel measurable and finite valued. The reference distances and candidate set are deterministic; the comparisons use only $(X_r,O_r)$. Exact conditional integrals and all pairwise comparisons specify a statistical procedure, but no bound on their numerical cost is claimed.

\subsection{Uniform risk bounds}

\begin{theorem}[Uniform statistical existence bound]\label{thm:estimator}
There is a finite-valued Borel estimator $\widehat\theta_n\in\Theta$, depending on the class bounds and on the observed data but not on $f$, and constants $A,c,C,w_*,v_*>0$ such that, for all $n\ge1$ and $w\ge w_*$,
\begin{align}
 \sup_{f,\theta}\Prob_{\theta}^{f,\otimes n}
  \{H_f(\widehat\theta_n,\theta)>w/\sqrt n\}&\le Ae^{-cw^2},\label{eq:main-risk-tail}\\
 \sup_{f,\theta}\E_{\theta}^{f,\otimes n}
       H_f^2(\widehat\theta_n,\theta)&\le C/n.\label{eq:main-risk}
\end{align}
After changing fixed constants, the first inequality also holds with $h_\nu$ in place of $H_f$. Moreover, for $v\ge v_*$,
\begin{equation}\label{eq:main-slope-tail}
 \sup_{f,\theta}\Prob_{\theta}^{f,\otimes n}
 \{\norm{\widehat\beta_s-\beta_s}>v n^{-1/3}\}\le Ae^{-cv^3}.
\end{equation}
Here and below the suprema range over $f\in\mathcal F_{\ell,u}$ and the indicated parameter class.
\end{theorem}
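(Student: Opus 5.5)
The estimator is the finite-net $T$-estimator \eqref{eq:overview-estimator} built on $\mathcal N_\varepsilon$ with $\varepsilon=n^{-1/2}$. The proof follows Birg\'e's architecture: the ingredients are the pairwise tests of Lemma~\ref{lem:pairtest}, the local packing count \eqref{eq:overview-packing}, and the design comparison \eqref{eq:design-comparison}, which converts $h_\nu$ into $H_f$ uniformly in $f$. Fix $f$ and $\theta$. By the $2\varepsilon$ cover there is a candidate $\theta_{i_0}$ with $h_\nu(\theta_{i_0},\theta)\le2\varepsilon$, hence $H_f(\theta_{i_0},\theta)\le 2\sqrt u\,\varepsilon$.

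The first step is a robust test bound. Each summand $Z_{ij}$ lies in $[-3/2,3/2]$ because $|\psi_{ij}|\le1$ and $|c_{ij}|\le 1/2$. The target is an inequality of the form
\[
\Prob_\theta^{f,\otimes n}\{j\text{ defeats }i\}\le \exp\{-c\,nH_f^2(\theta_i,\theta_j)\}
\quad\text{whenever } H_f(\theta,\theta_i)\le \kappa H_f(\theta_i,\theta_j)
\]
for a small absolute $\kappa$, and symmetrically with $i,j$ exchanged. I would prove it by bounding $\E_\theta^f Z_{ij}\le -H_f^2(\theta_i,\theta_j)/2+C H_f(\theta,\theta_i)H_f(\theta_i,\theta_j)$. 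This uses the exact centering at $\theta_i$, which is valid conditionally on each $x$ and so holds for every common $f$, together with Cauchy--Schwarz on $\sqrt{Q_\theta}-\sqrt{Q_i}$. I would bound $\Var_\theta Z_{ij}\le C H_f^2(\theta_i,\theta_j)$, since $\psi_{ij}^2\le(\sqrt{Q_j}-\sqrt{Q_i})^2/(\sqrt{Q_i}+\sqrt{Q_j})^2$ and $Q_\theta$ is comparable in integrated form. Bernstein's inequality then gives the bound. This is essentially the content of Lemma~\ref{lem:pairtest}, which I would invoke.

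The second step is the peeling argument. Let $w$ be large and suppose $h_\nu(\widehat\theta_n,\theta)>w\varepsilon$. Since $D_{\widehat\theta}\le D_{i_0}$, one of two events must occur. In the first, $i_0$ is defeated by some $j$ with $h_\nu(\theta_j,\theta_{i_0})\ge w\varepsilon/2$. In the second, every candidate $i$ with $h_\nu(\theta_i,\theta)>w\varepsilon$ fails to defeat some $j$ that is at distance at least $D_{i_0}$ and is far from $i$; standard $T$-estimator bookkeeping reduces this to the event that some far $j$ defeats $i_0$ or some far $i$ beats $i_0$. In both cases the union bound is taken over shells $\{j:2^k w\varepsilon< h_\nu(\theta_j,\theta)\le 2^{k+1}w\varepsilon\}$. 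By \eqref{eq:overview-packing}, each shell has at most $(1+C_P2^{k+1}w)^K$ elements. Each test error in shell $k$ is at most $\exp(-c\ell 4^k w^2)$, using $nH_f^2\ge \ell n h_\nu^2$. Summing over $k$ gives $\sum_k (C2^kw)^K e^{-c4^kw^2}\le Ae^{-c'w^2}$ for $w\ge w_*$, which is the $h_\nu$ tail. The $H_f$ tail \eqref{eq:main-risk-tail} follows from $H_f\le\sqrt u\,h_\nu$. Integrating the tail, using $H_f\le\sqrt2$, gives \eqref{eq:main-risk}. The estimator uses only the data and the deterministic $h_\nu$ distances, so it does not depend on $f$.

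The slope tail \eqref{eq:main-slope-tail} follows from Theorem~\ref{thm:modulus}. On the event $\|\widehat\beta_s-\beta_s\|>vn^{-1/3}$ we have $H_f(\widehat\theta_n,\theta)\ge (v/C)^{3/2}n^{-1/2}$, so the tail is bounded by $Ae^{-c(v/C)^3}$ once $(v/C)^{3/2}\ge w_*$.

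The main obstacle is the robust test bound. The difficulty is that the truth $\theta$ is only close to the candidate, not equal to it, and the variance bound for $Z_{ij}$ under $P_\theta$ must be controlled by $H_f^2(\theta_i,\theta_j)$ plus $H_f^2(\theta,\theta_i)$, uniformly in $f$ and across the singular region. I would handle this pointwise in $x$: first write $\sqrt{Q_\theta}=\sqrt{Q_i}+(\sqrt{Q_\theta}-\sqrt{Q_i})$, then use $|\psi_{ij}|\le1$, and only then integrate against $f\,d\nu$. Because every step is carried out conditionally on $x$, the model's singular geometry never enters this part of the argument; it enters only through the packing count.
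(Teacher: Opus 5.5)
Your proposal is correct and follows the paper's proof essentially step for step. The shared ingredients are the conditionally centered robust tests of Lemma~\ref{lem:pairtest}, a center $\theta_{i_0}$ within $2\varepsilon$ of the truth, a shell-by-shell union bound driven by the local count \eqref{eq:overview-packing} and $H_f\ge\sqrt\ell\,h_\nu$, integration of the tail, and Theorem~\ref{thm:modulus} for the slope. The one thing to tidy is your two-event peeling step: exactly one of $\widehat i$ and $i_0$ loses their unique comparison, and $D_{\widehat i}\le D_{i_0}$, so one always has $h_\nu(\widehat\theta_n,\theta_{i_0})\le D_{i_0}$ (the paper's single deterministic fact), and only the event that some far candidate defeats $i_0$ needs bounding.
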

The theorem applies to the construction in Section~\ref{sec:estimator-construction}. The robust comparison bound and local count in \eqref{eq:overview-packing} control errors shell by shell around a candidate close to the truth. At radius $w/\sqrt n$, the count grows polynomially in $w$ while comparison errors decay exponentially in $w^2$. This yields the Hellinger tail without an additional logarithmic loss. The slope tail then follows from Theorem~\ref{thm:modulus}.

Write $\widehat\beta_{s,n}$ for the outcome-slope component of
$\widehat\theta_n$ in Theorem~\ref{thm:estimator}.
Proofs of these corollaries are given in Appendix~\ref{app:slope-corollaries}.

\begin{corollary}[Minimax outcome-slope squared risk]
\label{cor:minimax-slope-risk}
For every parameter class covered by Theorem~\ref{thm:estimator},
its estimator satisfies
\[
 \sup_{f\in\mathcal F_{\ell,u}}\sup_{\theta\in\Theta}
 \E_\theta^{f,\otimes n}
   \|\widehat\beta_{s,n}-\beta_s\|^2\le Cn^{-2/3},
 \qquad n\ge1.
\]
If Assumption~\ref{ass:natural} holds, there are fixed constants
$0<c\le C<\infty$ such that, for every $n\ge1$,
\begin{equation}
 c n^{-2/3}\le
 \inf_{\widetilde\beta_{s,n}}
 \sup_{f\in\mathcal F_{\ell,u}}\sup_{\theta\in\Theta}
 \E_\theta^{f,\otimes n}
   \|\widetilde\beta_{s,n}-\beta_s\|^2
 \le C n^{-2/3}.
 \label{eq:minimax-slope-risk}
\end{equation}
The infimum is over all Borel estimators with values in $\R^d$
based only on the observed sample, without $f$ supplied as an input.
For either known-sign class $\Theta\cap\{\rho\ge0\}$ or
$\Theta\cap\{\rho\le0\}$, the same two-sided rate holds when
the corresponding half-neighborhood in
Assumption~\ref{ass:natural} is feasible.
\end{corollary}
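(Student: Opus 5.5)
The plan splits into the upper bound, which needs no assumption, and the lower bound, which uses Assumption~\ref{ass:natural}.

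\emph{Upper bound.} Integrate the slope tail \eqref{eq:main-slope-tail} of Theorem~\ref{thm:estimator}. Write $T=n^{1/3}\norm{\widehat\beta_{s,n}-\beta_s}$. Then
\[
 \E T^2=\int_0^\infty 2v\,\Prob(T>v)\,dv
 \le v_*^2+\int_{v_*}^\infty 2vAe^{-cv^3}\,dv\le C,
\]
uniformly in $f$ and $\theta$. Multiplying by $n^{-2/3}$ gives the first display. As an alternative, one can combine the deterministic bound $\norm{\beta_s-\beta'_s}^2\le CH_f^{4/3}$ from Theorem~\ref{thm:modulus} with Jensen's inequality, since $\E H_f^{4/3}\le(\E H_f^2)^{2/3}\le Cn^{-2/3}$ by \eqref{eq:main-risk}. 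This route is even simpler. Boundedness of $\Theta$ makes all moments finite for every $n\ge1$.

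\emph{Lower bound.} Use Le Cam's two-point method with the path of Theorem~\ref{thm:hardpath}. Take $f\equiv1$, which lies in $\mathcal F_{\ell,u}$ because $\ell\le1\le u$. Set $r=r_n=\tau n^{-1/6}$, with a fixed small $\tau$ chosen so that both endpoints lie in the neighborhood guaranteed by Assumption~\ref{ass:natural}; this holds for all $n\ge1$ once $\tau$ is small. Theorem~\ref{thm:hardpath} gives $H_1(\theta_{1,r},\theta_{2,r})\le Cr^3=C\tau^3n^{-1/2}$. Because $H$ is defined without the factor $1/2$, tensorization gives
\[
 1-\tfrac12H^2(P^{\otimes n},P'^{\otimes n})=(1-\tfrac12H^2)^n\ge 1-\tfrac n2H^2 .
\]
Hence the $n$-fold squared Hellinger distance is at most $C\tau^6$. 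After shrinking $\tau$, the total variation between the product laws is at most $1/2$. The slope separation is $\Delta=h_0r^2=h_0\tau^2n^{-1/3}$. Le Cam's inequality then shows that every estimator $\widetilde\beta_{s,n}$ satisfies
\[
 \max_k\E_k\norm{\widetilde\beta_{s,n}-\beta_{s,k}}^2
 \ge\tfrac{\Delta^2}{8}\bigl(1-\TV\bigr)\ge c\,n^{-2/3}.
\]
This follows from the triangle inequality and the fact that $\norm{\widetilde\beta-\beta_1}^2+\norm{\widetilde\beta-\beta_2}^2\ge\Delta^2/2$. The supremum over $f$ and $\Theta$ dominates the value at $f=1$ and these two points, which proves the lower half of \eqref{eq:minimax-slope-risk}. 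The upper half is the first display.

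\emph{Known-sign classes.} For $\Theta\cap\{\rho\ge0\}$, the upper bound is unchanged, because Theorem~\ref{thm:estimator} applies to any closed rectangular class and this restriction is one. The lower bound holds because the path has $\rho_k>0$ and lies in the positive half-neighborhood. For $\rho\le0$, use the reflected path of Proposition~\ref{prop:sign} with the same distance and separation orders.

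The main point needing care is that the endpoints stay feasible for \emph{every} $n\ge1$, not just for large $n$. Small $n$ is handled by the choice of $\tau$, since $r_n\le\tau$. The constants must also be uniform, which holds because Theorem~\ref{thm:hardpath} supplies bounds valid for all sufficiently small $r$.
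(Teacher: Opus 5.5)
Your proposal is correct and matches the paper's proof. For the upper bound, your alternative route (Theorem~\ref{thm:modulus} plus Jensen applied to \eqref{eq:main-risk}) is exactly the paper's argument, and integrating the slope tail also works. For the lower bound, the paper likewise applies Le Cam's two-point argument to the path of Theorem~\ref{thm:hardpath} at $f=1$ with $r_n=c_0n^{-1/6}$, bounds the product Hellinger distance by $nH_1^2$, uses the $\Delta_n^2/8$ risk bound, and handles the negative-sign class with the reflected path of Proposition~\ref{prop:sign}.
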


\begin{corollary}[Uniform slope rates on the zero-selection-slope layer]
\label{cor:zero-selection-slope}
Let $\Theta_0=\{\theta\in\Theta:\gamma_s=0\}$ be nonempty.
There is a constant $L_0>0$, depending only on the fixed class bounds,
such that, for every $\theta\in\Theta_0$, $\theta'\in\Theta$, and
$f\in\mathcal F_{\ell,u}$,
\begin{equation}
 \norm{\beta_s-\beta'_s}
 \le L_0h_\nu(\theta,\theta')
 \le L_0\ell^{-1/2}H_f(\theta,\theta').
 \label{eq:zero-selection-slope-modulus}
\end{equation}
The same full-model estimator as in Theorem~\ref{thm:estimator}
satisfies, for fixed $A_0,c_0,C_0,w_0>0$, all $n\ge1$, and
all $w\ge w_0$,
\begin{align}
 \sup_{f,\theta\in\Theta_0}
 \Prob_\theta^{f,\otimes n}
 \{\norm{\widehat\beta_{s,n}-\beta_s}>w/\sqrt n\}
 &\le A_0e^{-c_0w^2},
 \label{eq:zero-selection-slope-tail}\\
 \sup_{f,\theta\in\Theta_0}
 \E_\theta^{f,\otimes n}
       \norm{\widehat\beta_{s,n}-\beta_s}^2
 &\le C_0/n.
 \label{eq:zero-selection-slope-risk}
\end{align}
The constants are uniform over the entire layer, including $\rho=0$;
neither the estimator nor the competing endpoint is restricted to
$\Theta_0$.
\end{corollary}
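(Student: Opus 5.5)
The plan is to derive Corollary~\ref{cor:zero-selection-slope} from the coordinate comparison \eqref{eq:overview-metric} together with the estimator tail bounds of Theorem~\ref{thm:estimator}. The modulus \eqref{eq:zero-selection-slope-modulus} is the main step. Fix $\theta\in\Theta_0$, so $g=\gamma_s=0$, and an arbitrary $\theta'\in\Theta$. By Lemma~\ref{lem:embedding},
\[
\|F(\theta)-F(\theta')\|_2\le c_E^{-1}h_\nu(\theta,\theta').
\]
This single inequality controls the coordinate differences $\|\eta_s-\eta_s'\|$, $\|\gamma-\gamma'\|$, $|q-q'|$ and $\|Z-Z'\|$. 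Since $g=0$, we have $\eta_s=\beta_s$. It follows that
\[
\beta_s-\beta_s'=(\eta_s-\eta_s')+a'\lambda'(t_0')g'.
\]
The first term is at most $c_E^{-1}h_\nu$.

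The remaining task is to bound $|a'|\,\|g'\|$ linearly by $h_\nu$. This is where I expect the only real obstacle, because neither $|a'|$ nor $\|g'\|$ alone need be small. I would use the coordinate $Z=a^2g$, which vanishes at $\theta$. This gives $|a'|^2\|g'\|=\|Z'-Z\|\le Ch_\nu$. Also $\|g'\|=\|g'-g\|\le\|\gamma'-\gamma\|\le Ch_\nu$.

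I would then split into two cases.
\begin{itemize}
\item If $|a'|\le 1$, bound $|a'|\,\|g'\|\le \|g'\|\le Ch_\nu$ directly.
\item If $|a'|>1$, bound $|a'|\,\|g'\|\le |a'|^2\|g'\|\le Ch_\nu$.
\end{itemize}
In both cases the product is $O(h_\nu)$. As a cleaner alternative, note that $|a'|\le \sigma_+\rho_+$ is bounded on $\Theta$, so $|a'|\,\|g'\|\le \sigma_+\rho_+\|g'\|\le Ch_\nu$ immediately. The quantity $\lambda'(t_0')$ is bounded on the compact intercept range. Hence $\|\beta_s-\beta_s'\|\le L_0h_\nu$.

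The second inequality in \eqref{eq:zero-selection-slope-modulus} then follows from the design comparison \eqref{eq:design-comparison}, namely $h_\nu\le\ell^{-1/2}H_f$. Nothing in this argument uses $\rho\ne0$, so the bound holds at $\rho=0$. It also holds for arbitrary competitors, since only $g=0$ at the first endpoint was used.

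For the tail bound \eqref{eq:zero-selection-slope-tail}, I apply the modulus with $\theta$ the truth in $\Theta_0$ and $\theta'=\widehat\theta_n$. The competitor is unrestricted, so it does not matter that $\widehat\theta_n$ need not lie in $\Theta_0$. This gives
\[
\{\|\widehat\beta_{s,n}-\beta_s\|>w/\sqrt n\}\subset\{h_\nu(\widehat\theta_n,\theta)>w/(L_0\sqrt n)\}.
\]
The $h_\nu$ version of \eqref{eq:main-risk-tail} in Theorem~\ref{thm:estimator} bounds the probability of the right-hand event by $Ae^{-c w^2/L_0^2}$ whenever $w\ge L_0w_*$. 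This yields \eqref{eq:zero-selection-slope-tail} with $w_0=L_0w_*$.

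For the risk bound \eqref{eq:zero-selection-slope-risk}, I would avoid integrating the tail altogether. The modulus and the design comparison give
\[
\E\|\widehat\beta_{s,n}-\beta_s\|^2\le L_0^2\ell^{-1}\,\E H_f^2(\widehat\theta_n,\theta)\le C_0/n
\]
by \eqref{eq:main-risk}. Both the tail and risk bounds are uniform over $f$ and over $\theta\in\Theta_0$, because the constants from Theorem~\ref{thm:estimator} are uniform in $f$ and $\theta$ and $L_0$ depends only on the class bounds.
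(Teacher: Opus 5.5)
Your proposal is correct. The tail and risk steps coincide with the paper's: apply the modulus at $\theta'=\widehat\theta_n$, then use the $h_\nu$ tail \eqref{eq:test-reference-tail} and \eqref{eq:main-risk}.

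The difference is in how you prove the linear modulus. The paper never uses the finite coordinates. It notes that on $\Theta_0$ the index $t(x)=\gamma_0$ is constant, so the function $B=m-m'+(a-a')\lambda(t)$ from \eqref{eq:geom-B} is exactly affine with slope $\beta_s-\beta_s'$. Lemma~\ref{lem:moments} (only its $\Delta M$ and $\Delta t$ bounds) and the affine Gram identity then give $\|\beta_s-\beta_s'\|\le\sqrt3\|B\|_{L^2(\nu)}\le Ch_\nu$.

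You instead use the lower comparison in Lemma~\ref{lem:embedding} together with the identity $\beta_s-\beta_s'=\Delta\eta_s+a'\lambda'(t_0')g'$. You control the correction term through $\|g'\|=\|g'-g\|\le\|\Delta\gamma\|$ and the bound $|a'|\le\sigma_+\rho_+$. This is valid, holds across $\rho=0$ and for unrestricted competitors, and once the embedding is granted it takes two lines. It also reads the corollary directly off the coordinates the estimator analysis uses: on the layer, $\eta_s=\beta_s$.

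The cost is heavier machinery. As proved, that lemma controls $\Delta\eta_s$ only after bounding the curvature coordinate $U_2$ through Lemma~\ref{lem:projection} and the Taylor expansion \eqref{eq:embed-mean-expansion}. The paper's argument avoids both, because at $g=0$ there is no curvature remainder to remove. Finally, your $Z$-based case split is redundant, as you noticed.
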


\begin{corollary}[A transition upper bound near the zero-selection-slope layer]
\label{cor:transition-upper}
Under the assumptions of Theorem~\ref{thm:estimator}, let
$G=\|\gamma_s\|$ at the true parameter $\theta$. The same full-model
estimator satisfies, uniformly over $\theta\in\Theta$ and
$f\in\mathcal F_{\ell,u}$,
\begin{equation}
 \E_\theta^{f,\otimes n}
 \|\widehat\beta_{s,n}-\beta_s\|^2
 \le C\left[n^{-1}
       +\min\{G^2n^{-1/3},n^{-1}G^{-2}\}\right],\qquad G>0.
 \label{eq:transition-risk}
\end{equation}
For $G=0$, the bound is $C/n$. In particular, for every fixed
$0\le M<\infty$,
\begin{equation}
 \sup_{f\in\mathcal F_{\ell,u}}
 \sup_{\theta\in\Theta:\,\|\gamma_s\|\le Mn^{-1/3}}
 \E_\theta^{f,\otimes n}
 \|\widehat\beta_{s,n}-\beta_s\|^2
 \le \frac{C(1+M^2)}{n},\qquad n\ge1,
 \label{eq:shrinking-layer-risk}
\end{equation}
whenever the restricted class is nonempty. The estimator is not
restricted to this class and does not use $G$ or $M$.
\end{corollary}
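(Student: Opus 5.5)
We prove Corollary~\ref{cor:transition-upper} by applying the geometric mechanism \eqref{eq:geometry-mechanism} at the true endpoint $\theta$ with competitor $\theta'=\widehat\theta_n$, and then integrating against the Hellinger tail of Theorem~\ref{thm:estimator}. Write $H=H_f(\widehat\theta_n,\theta)$, $v=|\widehat a-a|$ and $G=\|\gamma_s\|$ at the truth. Appendix~\ref{app:geometry} proves \eqref{eq:geometry-mechanism} with both endpoints unrestricted, so the bounds hold with $G$ taken at $\theta$:
\[
 v^3\le CH,\qquad vG^2\le CH,\qquad \|\widehat\beta_{s,n}-\beta_s\|\le C(H+vG).
\]
If the appendix states these only with $G$ at one endpoint, the symmetric roles let us choose the true endpoint. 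This yields two pathwise bounds on $vG$. The first is $vG\le G\,(CH)^{1/3}$ from the cubic bound. The second is $vG\le CH/G$ from the bound $vG^2\le CH$. Hence
\[
 \|\widehat\beta_{s,n}-\beta_s\|^2\le C\bigl[H^2+\min\{G^2H^{2/3},\,H^2G^{-2}\}\bigr],
\]
and for $G=0$ this reduces to $CH^2$.

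Next we take expectations. The bound $\E\min\{X,Y\}\le\min\{\E X,\E Y\}$ separates the two branches, so it suffices to control $\E H^2\le C/n$ by \eqref{eq:main-risk} and $\E H^{2/3}\le(\E H^2)^{1/3}\le Cn^{-1/3}$ by Jensen. Substituting gives \eqref{eq:transition-risk}:
\[
 \E\|\widehat\beta_{s,n}-\beta_s\|^2\le C\bigl[n^{-1}+\min\{G^2n^{-1/3},n^{-1}G^{-2}\}\bigr].
\]
The constants are uniform in $f$ and $\theta$ because both the geometry constants and the constants of Theorem~\ref{thm:estimator} are.

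For \eqref{eq:shrinking-layer-risk}, restrict to $G\le Mn^{-1/3}$ and use the first branch of the minimum: $G^2n^{-1/3}\le M^2n^{-1}$. The risk is therefore at most $C(1+M^2)/n$. Neither the estimator nor this argument uses $G$ or $M$, since the class restriction enters only through evaluating the bound at the truth.

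The main obstacle is making sure the middle inequality $vG^2\le CH$ is really available with $G$ measured at the true endpoint, uniformly over an arbitrary competitor, including $\rho=0$ at either endpoint. If Appendix~\ref{app:geometry} instead gives a symmetrized form, such as with $\max(G,G')$ or $\min(G,G')$, I would first try to recover the truth-side version. The idea is to note that $\|g-g'\|\le C h_\nu\le CH$ by \eqref{eq:overview-metric}, since $\gamma$ is a coordinate of $F$. Then $G\le G'+CH$, and the extra term contributes only $vH\le CH^{4/3}$, whose square has expectation of order $n^{-4/3}$ by the tail bound \eqref{eq:main-risk-tail}; this is negligible.
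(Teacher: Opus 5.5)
Your proposal is correct and follows the paper's proof essentially line for line: the same three inequalities from Lemmas~\ref{lem:moments} and~\ref{lem:projection} at the true endpoint with competitor $\widehat\theta_n$, the two bounds $vG\le CGH^{1/3}$ and $vG\le CH/G$, then $\E\min\le\min\E$, Jensen for $\E H^{2/3}$, and the first branch of the minimum for the shrinking layer. The obstacle you flag does not arise: in Lemma~\ref{lem:projection}, $G$ is the selection-slope norm of the unprimed endpoint (the index in $B=m-m'+(a-a')\lambda(t)$ is the unprimed one), and both endpoints are arbitrary, so you may take the truth as the unprimed endpoint and drop the fallback argument.
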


If $g=0$, the respondent mean is $M(x)=\beta_0+a\lambda(t_0)+\beta_s^Tx$: selection shifts the intercept but leaves the slopes unchanged. Corollary~\ref{cor:transition-upper} extends root-$n$ slope accuracy to $\|g\|\le Mn^{-1/3}$ for fixed $M$, using the same unrestricted estimator. Its upper bound gives the global $n^{-1/3}$ slope scale at $\|g\|\asymp n^{-1/6}$, where the matching path conceals a slope change through joint nuisance variation. No matching lower bound for every drifting regime or maximal root-$n$ neighborhood radius is asserted.

\section{The exact support-separation boundary}\label{sec:main-support}
\begin{theorem}[Specified-sequence support frontier]\label{thm:support}
Assume the natural weak neighborhood. For each prespecified positive sequence $(b_n)$, there exists an observed-data support rule satisfying
\[
 \sup_{f\in\mathcal F_{\ell,u}}\sup_{\theta\in\Theta(b_n)}
 \Prob_{\theta}^{f,\otimes n}\{\widehat S_n(b_n)\ne S(\theta)\}\longrightarrow0
\]
if and only if $n^{1/3}b_n\to\infty$. For sufficiency one may take
\begin{equation}\label{eq:threshold-rule}
 \widehat S_n(b_n)=\{j\in\{1,\ldots,d\}:|\widehat\beta_j|>b_n/2\}.
\end{equation}
Its uniform error is at most $Ae^{-cnb_n^3}$ when $nb_n^3$ exceeds a fixed class constant. If $n^{1/3}b_n$ does not diverge, then along an infinite bounded subsequence the minimax support error is bounded below by a strictly positive constant. This lower bound holds even for a rule that knows any fixed allowed $f$.
\end{theorem}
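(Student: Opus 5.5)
The proof splits into sufficiency, via the threshold rule \eqref{eq:threshold-rule} built on the estimator of Theorem~\ref{thm:estimator}, and necessity, via a two-point lower bound along the matching path of Theorem~\ref{thm:hardpath}.

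\emph{Sufficiency.} Take $\widehat\beta$ to be the slope component of the finite-net estimator $\widehat\theta_n$. Fix $\theta\in\Theta(b_n)$ and suppose $\norm{\widehat\beta_s-\beta_s}\le b_n/2$. For $j\notin S(\theta)$ we have $\beta_j=0$, so $|\widehat\beta_j|\le b_n/2$ and $j$ is excluded. For $j\in S(\theta)$ we have $|\beta_j|\ge b_n$, so $|\widehat\beta_j|\ge b_n/2$. To get strict inclusion, use the event $\norm{\widehat\beta_s-\beta_s}<b_n/2$, or shrink the threshold slightly; this changes constants only. The error event is therefore contained in $\{\norm{\widehat\beta_s-\beta_s}\ge b_n/2\}$. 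Write $b_n/2=v n^{-1/3}$ with $v=n^{1/3}b_n/2$. Once $v\ge v_*$, which is the condition that $nb_n^3$ exceed a class constant, the slope tail \eqref{eq:main-slope-tail} gives a uniform error bound $Ae^{-cv^3}=Ae^{-c' n b_n^3}$ over all $f\in\mathcal F_{\ell,u}$. This bound tends to zero whenever $n^{1/3}b_n\to\infty$. Because the estimator does not depend on $f$, the supremum over $f$ is harmless.

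\emph{Necessity.} Suppose $n^{1/3}b_n\not\to\infty$. Then there are $B<\infty$ and an infinite subsequence along which $n^{1/3}b_n\le B$. On this subsequence, fix any allowed $f$ (for instance $f=1$) and apply Theorem~\ref{thm:hardpath} with $r=r_n$ chosen so that $h_0r_n^2=\max(b_n,\,\cdot\,)$. The simplest choice is $r_n=(b_n/h_0)^{1/2}$, which makes $\beta_{1,2}=h_0r_n^2=b_n$. Both endpoints then lie in $\Theta(b_n)$: the first has empty support, and the second has support $\{1\}$ with $|\beta_1|=b_n$. Two things need checking.
\begin{enumerate}
\item The path is legal only for small $r$, so we need $b_n\to0$. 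If $b_n$ does not tend to zero along the subsequence, then $n^{1/3}b_n$ cannot stay bounded, so bounded $n^{1/3}b_n$ forces $b_n\le Bn^{-1/3}\to0$.
\item The remaining coordinates $\beta_0$ and $\beta_j$ for $j\ge2$ must satisfy the separation condition. They are exactly zero, and $\beta_{0,k}=(1-k)r\lambda(t_0)$ is an intercept, which is unconstrained.
\end{enumerate}
Along the path, $H_f(\theta_{1},\theta_{2})\le Cr_n^3=C'b_n^{3/2}\le C'' n^{-1/2}$. The product-measure Hellinger affinity is therefore bounded below, since $1-\tfrac12 H^2_{\otimes n}=(1-\tfrac12 H_f^2)^n\ge e^{-C}$. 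By Le Cam's two-point bound, any rule, even one that knows $f$, satisfies
\[
\max_k \Prob_{\theta_k}\{\widehat S\neq S(\theta_k)\}\ge \tfrac12\bigl(1-\TV(P_1^{\otimes n},P_2^{\otimes n})\bigr)\ge c>0,
\]
because the two supports differ. Here TV is bounded by the Hellinger distance, which stays bounded away from $\sqrt2$. This gives the positive lower bound along the subsequence, so the uniform error cannot tend to zero.

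\emph{Main obstacle.} The delicate point is the bookkeeping that puts the path endpoints inside $\Theta(b_n)$ when $b_n$ is tiny. The path gives $|\beta_{1,2}|=h_0r^2$ exactly, so we can set this equal to $b_n$. If we instead had $b_n\ll n^{-1/3}$, the choice $r=n^{-1/6}$ would not satisfy the separation constraint exactly, which is why we match $b_n$ directly with $r_n=(b_n/h_0)^{1/2}$. We must also confirm that $r_n$ is below the admissibility threshold of Theorem~\ref{thm:hardpath} for all large $n$ in the subsequence. Finitely many small $n$ can be discarded, since the claim concerns an infinite subsequence.
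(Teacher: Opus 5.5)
Your proposal is correct and follows the paper's proof essentially step for step: a threshold at $b_n/2$ on a margin event combined with the cubic slope tail gives sufficiency, and the matching path with $r_n=(b_n/h_0)^{1/2}$ combined with a product-affinity two-point bound gives necessity. One small precision: $\TV\le H$ alone does not give $\TV<1$ when $H$ is only bounded away from $\sqrt2$, so you should use $\TV\le\sqrt{1-\mathcal A_n^2}$ with your bound $\mathcal A_n\ge e^{-C}$, which is exactly what the paper does in \eqref{eq:support-affinity}.
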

The vector tail proves sufficiency; product affinity on the matching path proves necessity. Appendix~\ref{app:support} gives both arguments and explains the specified-sequence quantifier. The theorem concerns consistency rather than optimal leading constants.

\section{Restricted regimes}\label{sec:restrictions}
A known correlation sign preserves the weak-class boundary when scale is free and the corresponding half-neighborhood is feasible (Proposition~\ref{prop:sign}). The distinct known-scale problem is discussed in Remark~\ref{rem:known-scale}.

At truths with $\|g\|\ge g_0>0$ or $|a|\ge a_0>0$, the full-vector inverse bound is linear against all competitors (Proposition~\ref{prop:strong}). A fixed nonzero exclusion slope also yields root-$n$ target-slope accuracy, either in the known-zero model or through the full-model estimator (Proposition~\ref{prop:exclusion}).

For each prespecified $b_n$, fixed-strength support recovery is possible when $\sqrt n b_n\to\infty$; a matching converse requires a feasible perturbation from zero in an allowed target slope (Corollary~\ref{cor:regular-support}). Appendix~\ref{app:restrictions} gives the complete statements and proofs. The separate unconditional simultaneous confidence result is Corollary~\ref{cor:confidence}.

\section{Numerical illustration}\label{sec:numerical}
These calculations illustrate geometry and discrimination for specified endpoint pairs. They neither optimize over all competitors nor evaluate the full-model estimator. Uniform empirical risk, sharp asymptotic boundary constants, and certified quadrature errors are outside their scope; Appendices~\ref{app:numerical}--\ref{app:additional-experiments} give the protocols and complete results.

\subsection{Matched-gap geometry}
Figure~\ref{fig:geometry-comparison} contrasts the matching path with two paths on the exact zero-selection-slope layer. All three use $d=1$, $f=1$, $t_0=0$, and a common outcome-slope gap $\delta=h(0)r^2$. On the weak path, the nuisance coordinates vary as in \eqref{eq:hardpath}. On each comparison path, $g=0$, $\sigma=1.3$, and $a$ is fixed at either $0$ or $0.4$; only the outcome slope changes from $0$ to $\delta$. The latter choice retains nonzero outcome--selection dependence.

Consecutive log--log slopes of $H_f^2$ are close to $3$ on the weak path and $2$ on the zero-layer paths; the normalizations approach positive limits. Appendix~\ref{app:additional-experiments} reports the full ranges. These curves illustrate the distinct geometries in Theorem~\ref{thm:modulus} and Corollary~\ref{cor:zero-selection-slope}.

\begin{figure}[tbp]
\centering
\includegraphics[width=\linewidth]{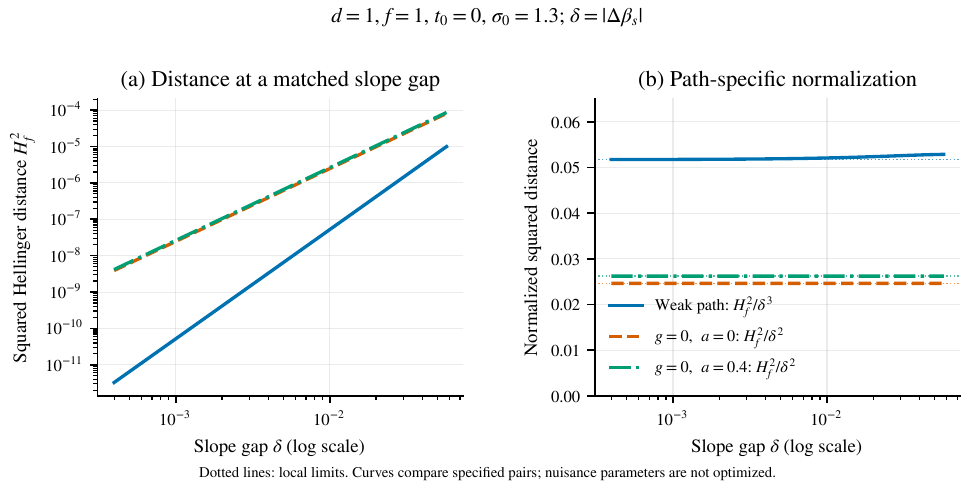}
\caption{Geometry at a matched slope gap. Left: observed-law squared Hellinger distance for the weak path and two exact $g=0$ paths. Right: $H_f^2/\delta^3$ on the weak path and $H_f^2/\delta^2$ on each zero-layer path; these different powers are indicated in the legend. Dotted lines are the corresponding local limits. Each curve uses all 20 fixed grid points, with identical nonresponse atoms within each pair.}
\label{fig:geometry-comparison}
\end{figure}

\subsection{Known-endpoint discrimination}
For the matching path, set $r_n=0.6(n/512)^{-\alpha}$ and
$b_n=h(0)r_n^2$. The choices $\alpha=1/4,1/6,1/12$ make $nb_n^3$
decrease, stay constant, or increase. At $n=32768$, the estimated
equal-prior errors are $0.423$, $0.286$, and $0.055$
(Figure~\ref{fig:two-point}). Appendix~\ref{app:numerical} specifies
the uniform design, 2000 repetitions per endpoint, and complete results.

The test receives both candidate endpoints. Persistent error illustrates the
support lower-bound obstruction even with that information; small error
establishes distinguishability only for the supplied pair.

Appendix~\ref{app:additional-experiments} repeats the grid for tilted and
edge-weighted designs in $\mathcal F_{0.4,2}$. It retains every scenario and
reports endpoint-specific, average, and maximum errors with Monte Carlo
intervals. Both examples show the same qualitative pattern.

\begin{figure}[tbp]
\centering
\includegraphics[width=\linewidth]{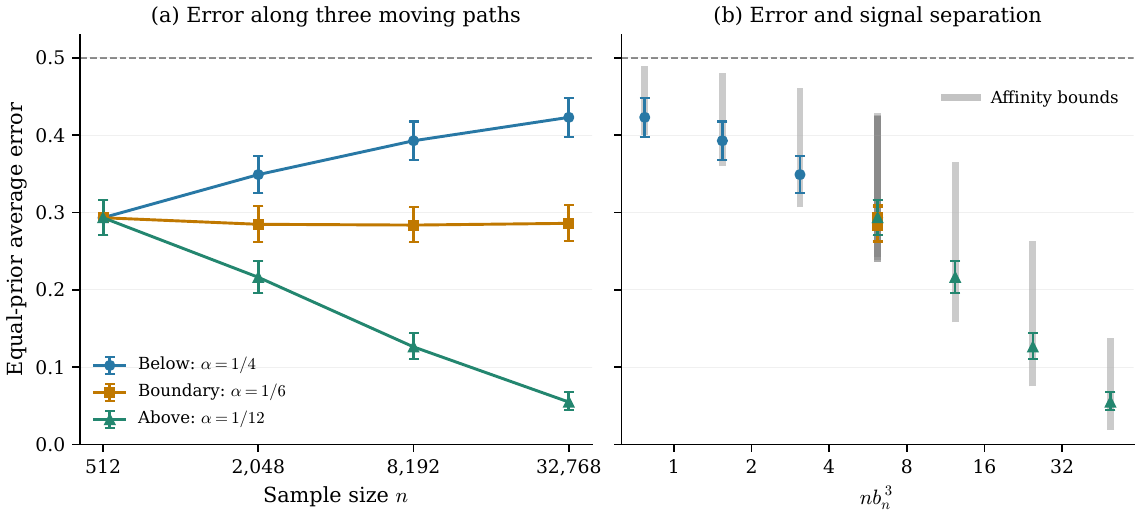}
\caption{Known-endpoint discrimination along three prespecified paths. Left: estimated average testing error as the sample size and parameter pair change together. Colored bars are conservative pointwise Monte Carlo intervals with coverage at least $95\%$. Right: all scenarios against $nb_n^3$; gray segments show the affinity bounds in \eqref{eq:two-point-affinity}, evaluated by quadrature. Boundary points share the same horizontal coordinate. The common $n=512$ pair is simulated once and reused. The dashed line marks error $1/2$.}
\label{fig:two-point}
\end{figure}

\section{Discussion}
The worst-case boundary persists with known design, alongside the root-$n$ slope accuracy on shrinking neighborhoods in Corollary~\ref{cor:transition-upper}. These slope bounds do not assert root-$n$ accuracy for the full parameter vector near the singularity. Sharp rates over all joint drifting sequences remain open.

The fixed-dimensional, full-cube design bounds and exact Gaussian/probit structure supply the projections, cumulants, and density envelopes used in the proofs. The distinction from semiparametric selection is substantive: \citet{HonoreHu2020} obtain partial-identification results without exclusion restrictions when the error distribution is unspecified. Here the exact model identifies the parameters, and the question is the uniform accuracy remaining within it.

The finite net and conditional integrals establish statistical attainability. A computational extension would need to control approximation and testing errors before transferring the risk bounds to a practical procedure; they do not automatically hold for maximum likelihood or adaptive Lasso. Other open problems are the sharp known-scale boundary (Remark~\ref{rem:known-scale}) and adaptation to an unspecified support-separation sequence.

\appendix
\section{Analytic proof of the inverse modulus}\label{app:geometry}
All norms in this appendix without another measure indicated are with respect to $\nu$. Choose finite $M_*,T,A$ from the parameter bounds so that $|m(x)|\le M_*$, $|t(x)|\le T$ and $|a|\le A$. In particular $p_*:=\Phi(-T)>0$. The latent representation of the respondent outcome is
\[
 Y=m(x)+aT_t+\sqrt{\sigma^2-a^2}W,
\]
where $T_t$ is standard normal conditioned to exceed $-t$ and $W$ is an independent standard normal. Completing the square gives the exact respondent cumulant generating function
\begin{equation}\label{eq:geom-cgf}
 K(s\mid x)=m(x)s+\frac{\sigma^2s^2}{2}
             +\log\Phi(t(x)+as)-\log\Phi(t(x)).
\end{equation}
Differentiating at zero proves \eqref{eq:cumulants}. The same Gaussian representation gives moments of every fixed order uniformly over the class.

\begin{lemma}[Strict inverse Mills curvature]\leavevmode\label{lem:curvature}
On $\R$, $0<h(t)<1$ and $\kappa(t)>0$. On every bounded interval they and the required derivatives are continuous and bounded, and
\[
 h_*:=\min_{|t|\le T}h(t)>0,\qquad
 \kappa_*:=\min_{|t|\le T}\kappa(t)>0.
\]
\end{lemma}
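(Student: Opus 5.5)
The plan is to read $h$ and $\kappa$ as cumulants of the truncated variable $T_t$ from the representation in \eqref{eq:geom-cgf}. Setting $m=0$, $\sigma=a=1$ there and dropping the Gaussian part, the cumulant generating function of $T_t$ is $s\mapsto\log\Phi(t+s)-\log\Phi(t)$. Its derivatives at $s=0$ are $\lambda(t)$, $\lambda'(t)=-h(t)$ and $\lambda''(t)=\kappa(t)$. Hence
\[
 \E T_t=\lambda(t),\qquad \Var T_t=1-h(t),\qquad \E(T_t-\lambda(t))^3=\kappa(t).
\]
From $\lambda'=-\lambda(t+\lambda)$ we also get the explicit form $h=\lambda(t+\lambda)$. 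Continuity and boundedness of $h,\kappa$ and of any further derivatives on bounded intervals are immediate, since $\lambda$ is a ratio of smooth functions with $\Phi>0$. Once strict positivity holds pointwise, continuity on the compact interval $[-T,T]$ gives $h_*>0$ and $\kappa_*>0$.

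\emph{Step 1: $0<h<1$.} The bound $h<1$ holds because $\Var T_t>0$, as $T_t$ is nondegenerate. For $h>0$, since $\lambda>0$ it suffices that $t+\lambda(t)>0$. This is trivial for $t\ge0$. For $t=-s<0$ it is the Mills inequality $s\{1-\Phi(s)\}<\phi(s)$, which follows from
\[
 1-\Phi(s)=\int_s^\infty\phi(u)\,du<\int_s^\infty (u/s)\phi(u)\,du=\phi(s)/s .
\]

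\emph{Step 2: $\kappa>0$.} This is the main obstacle. Since $\kappa=-h'$, differentiating $h=\lambda(t+\lambda)$ and using $\lambda'=-h$ gives
\[
 h'=\lambda(1-h)-h(t+\lambda).
\]
Substituting $h=\lambda(t+\lambda)$ and dividing by $\lambda>0$ shows that $\kappa>0$ is equivalent to
\[
 1-h(t)<\{t+\lambda(t)\}^2 .
\]
In the variable $c=-t$ this reads $\Var(Z\mid Z>c)<\{\E(Z\mid Z>c)-c\}^2$. That is, the excess $W_c=Z-c$ given $Z>c$ has coefficient of variation strictly below one. This is Sampford's (1953) inequality.

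The first route I would try is log-concavity. The excess $W_c$ has density proportional to $\phi(c+w)$ on $(0,\infty)$. This density is strictly log-concave, with $(\log)''=-1$.
\begin{itemize}
 \item For $c\ge0$ the density is also nonincreasing. A nonincreasing log-concave density on $[0,\infty)$ satisfies $\Var W\le(\E W)^2$, with equality only for the exponential law. Strictness follows because the Gaussian tail is not exponential.
 \item For $c<0$ the density first increases, so I would argue directly instead. Put $D(c)=\{\E W_c\}^2-\Var W_c$. As $c\to-\infty$, $D(c)\approx c^2-1\to\infty$. If $D$ vanished at a first point, the identities $\partial_c\E(Z\mid Z>c)=\lambda(-c)\{\E(Z\mid Z>c)-c\}$ and the analogous identity for the second moment would give $D'$ there. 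I expect a sign contradiction from that derivative.
\end{itemize}

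If this ODE argument becomes delicate, the fallback is the Prékopa--Brascamp--Lieb route. Write $u(t)=\log\Phi(t)$; then $\lambda=u'$ and $-h=u''$. The function $t\mapsto\Var T_t=1-h(t)$ is the variance of the log-concave measure $\phi$ restricted to a half-line. Its strict monotonicity in the truncation point is a known consequence of log-concavity: truncating further strictly reduces variance. Monotonicity in this direction gives exactly $h'<0$, hence $\kappa>0$, and strictness would come from analyticity of $h$.
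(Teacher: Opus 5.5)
Your reduction in Step 2 is correct and matches the paper's. With the excess $B=T_t+t$ one has $\E B=t+\lambda$, $\Var B=1-h$ and $\kappa=\lambda\{(\E B)^2-\Var B\}$, so everything rests on a strict coefficient-of-variation bound for $B$. Step 1 is fine. There is a minor slip: the cumulant generating function of $T_t$ is $s^2/2+\log\Phi(t+s)-\log\Phi(t)$, although the moments you state are right.

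The genuine gap is the case $c<0$, i.e.\ $t>0$. You leave it as an expectation, and the argument as framed does not close. Write $\mu=\E(Z\mid Z>c)$ and $m=\mu-c=\E W_c$, so that $\Var W_c=1-\mu m$ and $D=m^2-1+\mu m$. Using $\mu'=\mu m$ and $m'=\mu m-1$, at any zero of $D$ (where $\mu m=1-m^2$) one gets $D'(c)=-2m^3<0$. At a first zero reached from $c=-\infty$, where $D>0$, this is exactly the sign one expects, so no contradiction arises. The computation helps only in the opposite direction. Take the largest zero below a point already known to satisfy $D>0$ (e.g.\ $c=0$ from your first case); there $D'\ge0$ would be forced, which contradicts $D'=-2m^3$. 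Your fallback also falls short. Weak monotonicity of the truncated variance plus analyticity gives only $\kappa\ge0$ with isolated zeros. It does not give $\kappa(t)>0$ at every $t$, which is what $\kappa_*>0$ requires.

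The missing idea is that monotonicity of the density plays no role; only the increasing hazard matters. For every $t$, $B$ has survival function $S_t(v)=\Phi(t-v)/\Phi(t)$ and hazard $\lambda(t-v)$. This hazard is strictly increasing in $v$ because $\lambda'=-\lambda(t+\lambda)<0$ by your Step 1. Integrating the hazard gives $S_t(x+y)<S_t(x)S_t(y)$ for $x,y>0$. Tonelli then gives $\E B^2=2\int_0^\infty\!\int_0^\infty S_t(x+y)\,dx\,dy<2(\E B)^2$, that is, $\Var B<(\E B)^2$ with strictness built in. This is the paper's argument. It treats all $t$ at once, so neither the split at $c=0$, the ODE, nor an appeal to the equality case for exponential laws is needed.
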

\begin{proof}
Put
\[
 A(t)=\frac{\Phi(t)}{\phi(t)}
       =\int_0^\infty e^{tv-v^2/2}\,dv,
 \qquad g_t(v)=A(t)^{-1}e^{tv-v^2/2}\mathbf1\{v>0\}.
\]
On a bounded $t$ interval each finite derivative under the integral has a polynomial times Gaussian integrable majorant. If $B$ has density $g_t$, then $\E B=A'/A$ and $\E B^2=A''/A$. Integration by parts yields $A'=tA+1$, hence
\[
 t+\lambda(t)=\E B>0,\qquad
 \lambda'(t)=-\lambda(t)\{t+\lambda(t)\}<0.
\]
The survival function is $S_t(v)=\Phi(t-v)/\Phi(t)$, with strictly increasing hazard $\lambda(t-v)$. For $x,y>0$, integration of that hazard shows
$S_t(x+y)<S_t(x)S_t(y)$. Tonelli's theorem therefore gives the strict inequality
\[
 \E B^2=2\int_0^\infty\!\int_0^\infty S_t(x+y)\,dx\,dy
        <2\left(\int_0^\infty S_t(v)\,dv\right)^2=2(\E B)^2.
\]
Twice differentiating $\lambda=1/A$ gives
\begin{equation}\label{eq:geom-positive}
 \kappa(t)=\lambda(t)\{2(\E B)^2-\E B^2\}>0.
\end{equation}
Also $B$ has the law of $T_t+t$. Differentiating its logarithmic normalizer twice, or using \eqref{eq:geom-cgf} for $T_t$, gives
$\Var(B)=\Var(T_t)=1+\lambda'(t)=1-h(t)>0$.
Together with $\lambda'<0$ this proves $0<h<1$. Finally $\Phi$ is positive everywhere; its quotient with $\phi$ is smooth. Continuity on the fixed interval now supplies positive minima and bounded derivatives. Strict positivity was proved analytically before taking minima.
\end{proof}

\begin{lemma}[Observed moments and the affine index]\label{lem:moments}
For arbitrary endpoints put $H=H_f(\theta,\theta')$. Then
\begin{equation}\label{eq:geom-moments}
 \norm{\Delta\gamma}+\norm{\Delta t}_\infty
 +\norm{\Delta M}_{L^2(\nu)}+\norm{\Delta V}_{L^2(\nu)}
 +\norm{\Delta K_3}_{L^2(\nu)}\le CH.
\end{equation}
Moreover $|a^3-a'^3|\le CH$ and $|a-a'|^3\le CH$.
\end{lemma}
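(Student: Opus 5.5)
The plan is to convert the observed-law Hellinger distance into control of conditional functionals, one covariate point at a time, and then integrate over $x$. By \eqref{eq:design-comparison} it suffices to work with $h_\nu$, because $H_f\ge\sqrt\ell\,h_\nu$. Set
\[
 H_x^2=\int_{\mathcal O}(\sqrt{Q_\theta}-\sqrt{Q_{\theta'}})^2\,d\mu,
\]
so that $\int H_x^2\,d\nu\le CH^2$. Each $H_x$ is at most $\sqrt2$. The bounds $|m|\le M_*$, $|t|\le T$ and $\sigma\in[\sigma_-,\sigma_+]$ give uniform Gaussian-type tails and uniform moments of every fixed order for the respondent component.

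The pointwise step has two parts.
\begin{itemize}
\item \emph{Atom.} The nonresponse atom gives $|\Phi(-t(x))-\Phi(-t'(x))|\le CH_x$, since $|\sqrt p-\sqrt{p'}|\le H_x$ and $p,p'\ge p_*$. Because $\phi$ is bounded below on $[-T,T]$, this yields $|\Delta t(x)|\le CH_x$.
\item \emph{Respondent moments.} For a polynomial $y^k$ with $k\le3$, use the standard estimate
\[
 \Bigl|\int y^k(Q_1-Q'_1)\,dy\Bigr|\le H_x\Bigl(\int y^{2k}(\sqrt{Q_1}+\sqrt{Q'_1})^2\,dy\Bigr)^{1/2}\le CH_x .
\]
This is Cauchy--Schwarz after factoring $Q-Q'=(\sqrt Q-\sqrt{Q'})(\sqrt Q+\sqrt{Q'})$, with the second factor bounded by the uniform moments. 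Dividing by the response probabilities, which are at least $p_*$ and differ by $O(H_x)$, shows that the conditional raw moments of orders one to three of $p_\theta(\cdot\mid x,R=1)$ differ by at most $CH_x$.
\end{itemize}

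Cumulants of order at most three are polynomials in the raw moments, and those moments are uniformly bounded. Hence $|\Delta M(x)|+|\Delta V(x)|+|\Delta K_3(x)|\le CH_x$. Squaring and integrating against $\nu$ gives the three $L^2(\nu)$ bounds.

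Next I pass from the index to the coefficients. On the full cube $D$, $\int|\Delta t|^2\,d\nu$ is a positive definite quadratic form in $\Delta\gamma$, because $1,x_1,\dots,x_d$ are linearly independent in $L^2(\nu)$. This gives $\norm{\Delta\gamma}\le C\norm{\Delta t}_{L^2(\nu)}\le CH$. Since $t$ is affine on a bounded cube, it follows that $\norm{\Delta t}_\infty\le C\norm{\Delta\gamma}\le CH$. This proves \eqref{eq:geom-moments}.

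For $a$, write
\[
 a^3-a'^3=\frac{K_3-K'_3-a'^3\{\kappa(t)-\kappa(t')\}}{\kappa(t)} .
\]
Lemma~\ref{lem:curvature} gives $\kappa\ge\kappa_*$ and $\kappa$ Lipschitz on $[-T,T]$, and $|a'|\le A$. Taking $L^2(\nu)$ norms then yields $|a^3-a'^3|\le C(\norm{\Delta K_3}+\norm{\Delta t}_\infty)\le CH$. Finally, the elementary inequality $|a-a'|^3\le 4|a^3-a'^3|$ holds for all real $a,a'$; it follows from
\[
 a^3-a'^3=(a-a')\bigl(a^2+aa'+a'^2\bigr)
\]
together with $a^2+aa'+a'^2\ge\tfrac14(a-a')^2$.

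The main obstacle is the moment comparison near the atom for respondent laws with unbounded support. There one must confirm that a single constant controls $\int y^{6}(\sqrt{Q_1}+\sqrt{Q'_1})^2\,dy$ uniformly, including as $|\rho|\to\rho_+$. I would handle this through the latent representation of Appendix~\ref{app:geometry}: $Q_{\theta,1}(\cdot\mid x)$ is dominated by the $N(m(x),\sigma^2)$ density, whose moments are uniformly bounded. This avoids any dependence on $1-\rho^2$.
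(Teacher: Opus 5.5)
Your proposal is correct and follows essentially the same route as the paper: Cauchy--Schwarz on the observed submoments, with the respondent subdensity dominated by the $N(m(x),\sigma^2)$ density; normalization by response probabilities bounded below by $p_*$; Lipschitz cumulant polynomials; the atom together with the affine Gram matrix for $\gamma$ and $t$; the identity $(a^3-a'^3)\kappa(t)=\Delta K_3-a'^3\{\kappa(t)-\kappa(t')\}$; and the bound $a^2+aa'+a'^2\ge(a-a')^2/4$. The differences are cosmetic: you bound the atom pointwise in $x$ before integrating, and you invoke positive definiteness of the Gram matrix rather than its explicit form $\operatorname{diag}(1,1/3,\ldots,1/3)$.
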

\begin{proof}
Let $h_x$ be the conditional observed Hellinger distance and define
$m_j(x)=\int y^jQ_{\theta,1}(y\mid x)\,dy$ for $j=0,1,2,3$; thus $m_0=\Phi(t)$. These are observed submoments, not moments of an unobserved imputation. For $j\ge0$ let
$B_{2j}=\E(M_*+\sigma_+|W|)^{2j}$, interpreting $B_0=1$.
The respondent subdensity is bounded above by the unconditional outcome normal density. Cauchy--Schwarz gives
\begin{align*}
 |m_j-m_j'|^2
 &\le h_x^2\int y^{2j}(\sqrt{Q_{\theta,1}}+\sqrt{Q_{\theta',1}})^2\,dy
 \le4B_{2j}h_x^2,\\
 \norm{m_j-m_j'}_{L^2(\nu)}&\le2\sqrt{B_{2j}/\ell}\,H.
\end{align*}
Since both response probabilities exceed $p_*$ and the submoments are uniformly bounded, the same $L^2$ control holds for the normalized raw moments $r_j=m_j/m_0$. The cumulants
\[
 M=r_1,\qquad V=r_2-r_1^2,\qquad
 K_3=r_3-3r_1r_2+2r_1^3
\]
are Lipschitz polynomials on the bounded range of these raw moments. This proves their bounds without assuming that $Y$ is bounded; the sixth Gaussian moment suffices.

The derivative of $\Phi$ is at least $\phi(T)$ on the index interval, so
\[
 \norm{t-t'}_{L^2(\nu)}\le\frac{2H}{\sqrt\ell\,\phi(T)}.
\]
The affine Gram matrix is exactly
$G_\nu=\operatorname{diag}(1,1/3,\ldots,1/3)$. Thus
\begin{equation}\label{eq:geom-gram}
 \norm{\Delta\gamma}\le\sqrt3\norm{\Delta t}_{L^2(\nu)},
 \qquad \norm{\Delta t}_\infty\le\sqrt{1+3d}\norm{\Delta t}_{L^2(\nu)}.
\end{equation}
The actual-design Gram matrix satisfies $\ell G_\nu\preceq G_f\preceq uG_\nu$. No differentiability of $f$ is used.

For the cubic scalar coordinate, use the exact identity
\[
 (a^3-a'^3)\kappa(t)=K_3-K_3'-a'^3\{\kappa(t)-\kappa(t')\}.
\]
Its right side has $L^2(\nu)$ norm at most $CH$, by the preceding bounds and the bounded derivative of $\kappa$. Its left side has norm at least $|a^3-a'^3|\kappa_*$. Finally, with $v=|a-a'|$,
\[
 |a^3-a'^3|=v(a^2+aa'+a'^2),\qquad
 a^2+aa'+a'^2=\tfrac34(a+a')^2+\tfrac14(a-a')^2\ge v^2/4.
\]
This proves the remaining assertions for positive, negative, and zero correlations.
\end{proof}

\begin{lemma}[Quantitative nonaffine projection]\label{lem:projection}
Let $\Pi$ be orthogonal projection onto the affine functions in $L^2(\nu)$, let $g=\gamma_s$, and put $G=\norm{g}$. Then
\begin{align}
 \norm{(I-\Pi)\lambda(t)}_{L^2(\nu)}
    &\ge\frac{\kappa_*}{3\sqrt5\,d}G^2,\label{eq:geom-residual}\\
 \norm{c_s(t)}&\le L_*G,\qquad L_*:=\max_{|t|\le T}|\lambda'(t)|,\label{eq:geom-affine-slope}
\end{align}
where $c_s(t)$ is the slope vector of $\Pi\lambda(t)$. For the two endpoints of Lemma~\ref{lem:moments},
\begin{equation}\label{eq:geom-vg}
 |a-a'|G^2\le CH,\qquad
 \norm{\Delta\beta_s}\le CH+L_*|a-a'|G.
\end{equation}
\end{lemma}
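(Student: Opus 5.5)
The argument has three parts, in this order: the residual lower bound \eqref{eq:geom-residual}, the affine-slope bound \eqref{eq:geom-affine-slope}, and then the two endpoint inequalities \eqref{eq:geom-vg}, obtained by splitting $\Delta M$ into affine and nonaffine parts.

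\emph{Residual bound.} Write $t(x)=t_0+g^Tx$ and expand $\lambda(t(x))$ around $t_0$ by Taylor's formula with integral remainder. The function $\lambda(t)$ minus its best affine approximation is at least as large, in $L^2(\nu)$-distance from the affine functions, as the distance of $\lambda(t)$ from any particular affine function. It is therefore cleaner to lower bound directly $\inf_{\alpha,b}\norm{\lambda(t)-\alpha-b^Tx}$. I reduce to one dimension along the direction $e=g/G$. Set $s=e^Tx$ and write $x=se+x_\perp$. Conditioning on $x_\perp$ only increases the infimum, so it suffices to use a univariate convexity argument. Since $\lambda''=\kappa\ge\kappa_*$ on $[-T,T]$, the function $u\mapsto\lambda(t_0+Gu)-\alpha-\beta u$ has second derivative at least $\kappa_*G^2$. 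A function with second derivative at least $c$ on an interval of length comparable to one has $L^2$ distance at least a constant times $c$ from linear functions; one can see this by testing against the Legendre polynomial $P_2$, which is orthogonal to affine functions. Concretely, test the residual against the quadratic $q(x)=(g^Tx)^2-\E_\nu(g^Tx)^2$ (or $x_j^2-1/3$ along a coordinate where $|g_j|\ge G/\sqrt d$). This loses a factor of $d$, which matches the constant $\kappa_*/(3\sqrt5 d)$: the $\sqrt5$ is the norm of the Legendre $P_2$ on the uniform law, and the $3$ and $d$ come from the coordinate choice. The plan is therefore to pick $j$ with $g_j^2\ge G^2/d$ and pair the residual with $\psi(x)=\tfrac{3\sqrt5}{2}(x_j^2-\tfrac13)$, which has unit norm and is orthogonal to affine functions. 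Then $\langle\lambda(t),\psi\rangle=\langle(I-\Pi)\lambda(t),\psi\rangle$. Integration by parts in $x_j$ twice, against the antiderivative kernel of $x_j^2-1/3$, which is nonnegative of the form $(1-x_j^2)^2/4$, expresses this inner product as $g_j^2\,\E[\kappa(t)\,w(x_j)]$ with a positive weight $w$. This gives the bound $c\kappa_*g_j^2\ge c\kappa_*G^2/d$. The main obstacle is getting this explicit constant; a generic $c>0$ suffices for the later use.

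\emph{Affine slope.} With $G_\nu=\operatorname{diag}(1,1/3,\dots)$, the slope of $\Pi\lambda(t)$ is $c_s=3\E_\nu[x\lambda(t(x))]$. Integrating by parts coordinatewise, using $\E_\nu[x_j\varphi(x_j)]=\tfrac12\E[(1-x_j^2)\partial_j\varphi]$, gives $c_{s,j}=\tfrac32\E[(1-x_j^2)\lambda'(t)]g_j$. Since $\tfrac32\E(1-x_j^2)=1$, we get $|c_{s,j}|\le L_*|g_j|$, and hence $\norm{c_s}\le L_*G$.

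\emph{Endpoint bounds.} Since $M=m+a\lambda(t)$ and $m$ is affine, we have $(I-\Pi)\Delta M=a(I-\Pi)\lambda(t)-a'(I-\Pi)\lambda(t')$. This equals
\[
(a-a')(I-\Pi)\lambda(t)+a'(I-\Pi)\{\lambda(t)-\lambda(t')\}.
\]
The last term is bounded by $A\,\sup|\lambda'|\,\norm{\Delta t}_\infty\le CH$ by Lemma~\ref{lem:moments}, and $\norm{\Delta M}\le CH$. Combining these with \eqref{eq:geom-residual} gives $|a-a'|G^2\le CH$. For the slopes, apply the slope-coefficient map to $\Pi\Delta M$, which is bounded by $C\norm{\Delta M}$ via the Gram matrix. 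This gives
\[
\Delta\beta_s=\text{slope}(\Pi\Delta M)-\bigl(a\,c_s(t)-a'c_s(t')\bigr).
\]
Write $a\,c_s(t)-a'c_s(t')=(a-a')c_s(t)+a'\{c_s(t)-c_s(t')\}$. The second piece is at most $C\norm{\Delta t}\le CH$, by Lipschitz continuity of $\lambda$ and $\Pi$. The first piece is at most $L_*|a-a'|G$ by \eqref{eq:geom-affine-slope}, which proves \eqref{eq:geom-vg}.
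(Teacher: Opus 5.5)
Your proposal is correct and is essentially the paper's proof: the paper pairs $\lambda(t)$ with $T_j=\psi''(x_j)=12x_j^2-4$, $\psi(z)=(1-z^2)^2$ (your $P_2$-type test function up to scaling) and integrates by parts twice under $\nu$; for \eqref{eq:geom-vg} it projects $B=\Delta m+(a-a')\lambda(t)=\Delta M-a'\{\lambda(t)-\lambda(t')\}$, exactly as your split does, and your coordinatewise integration by parts for $c_s$ is only a cosmetic variant of the paper's bound $\norm{c_s(t)}\le\sqrt3\norm{\lambda(t)-\lambda(t_0)}_{L^2(\nu)}\le L_*G$. One slip: the second antiderivative of $x_j^2-\tfrac13$ that vanishes together with its derivative at $\pm1$ is $(1-x_j^2)^2/12$, not $(1-x_j^2)^2/4$; with $1/12$ your unit-norm test function gives exactly $\kappa_*g_j^2/(3\sqrt5)$ and hence the stated constant.
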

\begin{proof}
Take $\psi(z)=(1-z^2)^2$ and $T_j(x)=\psi''(x_j)=12x_j^2-4$. The two boundary values of $\psi$ and $\psi'$ vanish, and direct integration under the normalized uniform measure gives
\[
 \E_\nu\psi(X_j)=8/15,\qquad
 \norm{T_j}_{L^2(\nu)}^2=64/5,\qquad
 \langle T_j,L\rangle_\nu=0\quad\text{for all affine }L.
\]
Two integrations by parts in $x_j$ give
\[
 \langle\lambda(t),T_j\rangle_\nu
  =g_j^2\E_\nu[\kappa(t)\psi(X_j)]\ge(8/15)\kappa_*g_j^2.
\]
Choose $j$ with $g_j^2\ge G^2/d$ and apply Cauchy--Schwarz to the residual to obtain \eqref{eq:geom-residual}. All integrations by parts take place under $\nu$, not under the possibly nonsmooth actual density $f$. In particular the corresponding actual-design residual satisfies
\[
 \inf_{L\text{ affine}}\norm{\lambda(t)-L}_{L^2(f\nu)}
 \ge\sqrt\ell\,\frac{\kappa_*}{3\sqrt5\,d}G^2.
\]
Subtracting $\lambda(t_0)$ and using the projection contraction and the affine Gram matrix yields
\[
 \norm{c_s(t)}\le\sqrt3\norm{\lambda(t)-\lambda(t_0)}_{L^2(\nu)}
 \le\sqrt3 L_*\norm{g^TX}_{L^2(\nu)}=L_*G.
\]
Now set
\begin{equation}\label{eq:geom-B}
 B(x)=M(x)-M'(x)-a'\{\lambda(t(x))-\lambda(t'(x))\}
     =m(x)-m'(x)+(a-a')\lambda(t(x)).
\end{equation}
Lemma~\ref{lem:moments} gives $\norm B_{L^2(\nu)}\le CH$. Its nonaffine projection and \eqref{eq:geom-residual} imply $|a-a'|G^2\le CH$; its affine projection, the Gram matrix, and \eqref{eq:geom-affine-slope} give the slope bound. This proves \eqref{eq:geom-vg} with both endpoints unrestricted.
\end{proof}

\begin{proof}[Proof of Theorem~\ref{thm:modulus}]
Put $v=|a-a'|$. Lemmas~\ref{lem:moments} and~\ref{lem:projection} give $v^3\le CH$ and $vG^2\le CH$. If $v>0$, then $vG=\sqrt{v(vG^2)}\le CH^{2/3}$; if $v=0$ the last term vanishes. Since $H\le\sqrt2$, the linear term is also bounded by a fixed multiple of $H^{2/3}$. This proves \eqref{eq:main-modulus}, including $H=0$.

If $G\ge g_0$, then $v\le Cg_0^{-2}H$. If $|a|\ge a_0$, then
\[
 a^2+aa'+a'^2=(a'+a/2)^2+3a^2/4\ge3a_0^2/4
\]
and the cubic-coordinate bound again gives $v\le CH$. Substitution into \eqref{eq:geom-vg} proves the asserted one-strong-endpoint result. In fact \eqref{eq:geom-B} and the variance equation then also control $\beta,\sigma,\rho$ at order $H$, as detailed in Appendix~\ref{app:restrictions}.
\end{proof}

\section{A feasible hard path and its root-density expansion}\label{app:hardpath}
\begin{proof}[Proof of Theorem~\ref{thm:hardpath}]
The parameters in \eqref{eq:hardpath} converge to the baseline of Assumption~\ref{ass:natural}. Thus a fixed $r_0>0$ makes both endpoints feasible for $0<r\le r_0$, with positive correlations less than $\rho_+$, interior scales, and
\[
 \sigma_k^2-a_k^2=\sigma_0^2-k^2r^2\{1-h_0\}\ge\sigma_0^2/2.
\]
The first endpoint has $m_1=0$. At the second,
\[
 \beta_{0,2}=-r\lambda(t_0),\qquad
 \beta_{1,2}=-r^2\lambda'(t_0)=h_0r^2,\qquad
 \beta_{j,2}=0\quad(j\ne1).
\]
This verifies the exact support and gap statements. It remains to prove the distance bound without assuming that low-order moment agreement alone implies root-density agreement.

Extend the displayed parameter formulas to $|r|\le r_0$ only for differentiation. Let $q_k(r;x,y)$ denote the unnormalized respondent component from \eqref{eq:respondent} along endpoint $k$, and write
\[
 q_k=\sigma_k^{-1}\phi(z_k)\Phi(w_k),\qquad
 z_k=(y-m_k)/\sigma_k,\qquad
 w_k=(t_r+\rho_k z_k)/\sqrt{1-\rho_k^2}.
\]
There is a common square-integrable envelope for the first three $r$ derivatives of $\sqrt{q_k}$. To verify the only tail issue, for $v\ge0$ use the integral in Lemma~\ref{lem:curvature} to obtain
\[
 A(-v)\ge\int_0^{1/(v+1)}e^{-vu-u^2/2}\,du
        \ge e^{-3/2}/(v+1).
\]
Hence $\lambda(w)\le C(1+|w|)$ for $w<0$; for $w\ge0$, use $\Phi(w)\ge1/2$. The identities
\[
 \lambda'=-\lambda(w+\lambda),\qquad
 \lambda''=\lambda(w^2-1)+3w\lambda^2+2\lambda^3
\]
show that these derivatives grow at most polynomially as well. Uniformly in $x,r,k$, all first three derivatives of $z_k,w_k$ are bounded by $C(1+|y|)$, because their scale denominators have a positive lower bound. The first three derivatives of $\ell_k=\log q_k$ are therefore bounded by a fixed polynomial in $|y|$. The exact square-root formulas are
\begin{align*}
 (\sqrt q)'&=\tfrac12\sqrt q\,\ell',\\
 (\sqrt q)''&=\sqrt q\{\ell''/2+(\ell')^2/4\},\\
 (\sqrt q)'''&=\sqrt q\{\ell'''/2+3\ell'\ell''/4+(\ell')^3/8\}.
\end{align*}
Since $\Phi(w_k)\le1$, $\sqrt{q_k}\le Ce^{-c y^2}$. A single function $C(1+|y|)^N e^{-c y^2}$ thus dominates these derivatives. Dominated convergence and the integral form of differentiation show that $r\mapsto\sqrt{q_k(r)}$ is $C^3$ into $L^2(f\,d\nu\,dy)$, uniformly over $f\le u$. The atom $\sqrt{\Phi(-t_r)}$ is smooth and identical at both endpoints. The same calculation for $q_k$ gives Gaussian envelopes for its density derivatives, including after multiplication by $e^{sy}$ for each fixed real $s$.

We next identify the zero, first, and second density derivatives. Put $M_r=rL_r$. Substituting \eqref{eq:hardpath} into the exact transform \eqref{eq:geom-cgf} and Taylor expanding $\log\Phi$ at $t_0$ gives, uniformly in $x$ and on bounded real $s$ intervals,
\begin{equation}\label{eq:path-transform}
 K_k(s\mid x)=M_rs+\tfrac12\sigma_0^2s^2
 +\kappa(t_0)r^3\left\{\frac{kx_1^2s}{2}
             +\frac{k^2x_1s^2}{2}+\frac{k^3s^3}{6}\right\}+O(r^4).
\end{equation}
Indeed, the linear term in $\log\Phi(t_0+r(x_1+ks))-\log\Phi(t_0+rx_1)$ is $kr\lambda(t_0)s$. Its second-order terms are
$\lambda'(t_0)r^2(kx_1s+k^2s^2/2)$. The affine mean in \eqref{eq:hardpath} reduces the first two location terms to $M_rs$, while $\sigma_k^2=\sigma_0^2-k^2r^2\lambda'(t_0)$ cancels the second-order variance term. The third-order difference is exactly the bracket in \eqref{eq:path-transform}. All derivatives of $\log\Phi$ needed for this bounded-argument expansion exist and are bounded.

The two unnormalized bilateral Laplace transforms are $\Phi(t_r)\exp\{K_k(s\mid x)\}$. By \eqref{eq:path-transform}, their zero, first, and second $r$ derivatives at zero agree for every real $s$. The Gaussian derivative envelopes proved above justify interchanging each of these derivatives with the $y$ integral. For each fixed $x$, a difference of these signed density derivatives has a Gaussian majorant. Its bilateral Laplace transform is therefore entire in $s$. Since that transform vanishes on the real axis, analytic uniqueness makes it vanish on the imaginary axis. Uniqueness of the Fourier transform of an integrable function makes the signed density itself zero almost everywhere. Thus the two density derivatives agree for orders zero through two. Their base density is the strictly positive
$\Phi(t_0)\sigma_0^{-1}\phi(y/\sigma_0)$, so the square-root chain rule makes the corresponding root-density derivatives equal as well. This argument uses the whole transform, not just finitely many moments.

In the Hilbert space of the observed square-root laws, let $S_k(r)$ denote endpoint $k$. Taylor's integral formula gives
\[
 S_k(r)=S_k(0)+rS_k'(0)+\tfrac12r^2S_k''(0)
              +\tfrac12\int_0^r(r-v)^2S_k'''(v)\,dv.
\]
The first three terms cancel between endpoints; the uniform third-derivative bound yields $\norm{S_2(r)-S_1(r)}\le Cr^3$. This is exactly $H_f\le Cr^3$. For the reverse bound, the third respondent cumulants differ by $7r^3\kappa(t_r)$. Lemma~\ref{lem:moments} and $\kappa\ge\kappa_*$ imply
\[
 7\kappa_*r^3\le\norm{K_{3,2}-K_{3,1}}_{L^2(\nu)}\le CH_f.
\]
This proves $H_f\asymp r^3$. Choosing $r=(\delta/C)^{1/3}$ gives a slope gap of order $\delta^{2/3}$ with distance at most $\delta$. Relabeling the cube coordinates gives every freely varying slope coordinate. Together with Theorem~\ref{thm:modulus}, this proves the modulus assertion. The path proof does not invoke the finite embedding or its statistical consequences.
\end{proof}

\paragraph{The coefficient used in Figure~\ref{fig:hard-path}.}
For the reference design $f=1$, the expansion also identifies the
leading squared-distance coefficient. Write $\kappa_0=\kappa(t_0)$,
$z=y/\sigma_0$ and $q_0(y)=\Phi(t_0)\sigma_0^{-1}\phi(z)$.
Let $\mathrm{He}_1(z)=z$, $\mathrm{He}_2(z)=z^2-1$ and
$\mathrm{He}_3(z)=z^3-3z$ be the first three probabilists' Hermite
polynomials. Using \eqref{eq:path-transform}, the difference of the
unnormalized transforms $\Phi(t_r)\exp\{K_k(s\mid x)\}$ has
third-order coefficient
$\kappa_0\{x_1^2s/2+3x_1s^2/2+7s^3/6\}$ times
$\Phi(t_0)\exp(\sigma_0^2s^2/2)$. Transform uniqueness, justified by the same
Gaussian derivative envelopes, identifies the corresponding density
coefficient. Since the lower-order density derivatives agree, the
square-root chain rule then gives
\begin{equation}\label{eq:path-root-coefficient}
 \frac{\sqrt{q_2(r;x,y)}-\sqrt{q_1(r;x,y)}}{r^3}
 \longrightarrow \sqrt{q_0(y)}\,\kappa_0
 \left\{\frac{x_1^2\mathrm{He}_1(z)}{4\sigma_0}
 +\frac{3x_1\mathrm{He}_2(z)}{4\sigma_0^2}
 +\frac{7\mathrm{He}_3(z)}{12\sigma_0^3}\right\}
\end{equation}
in $L^2(d\nu\,dy)$. This limit also follows from the $C^3$
root-density Taylor expansion above, so pointwise transform expansion
is not being used in place of norm convergence. The nonresponse
atoms agree for every $r$. Orthogonality under a standard Gaussian,
$\E\mathrm{He}_j(Z)^2=j!$, together with
$\int x_1^2d\nu=1/3$ and $\int x_1^4d\nu=1/5$, yields
\begin{equation}\label{eq:path-limit}
 L(t_0,\sigma_0):=
 \lim_{r\to0}\frac{H_1^2(\theta_{1,r},\theta_{2,r})}{r^6}
 =\Phi(t_0)\kappa_0^2
 \left\{\frac1{80\sigma_0^2}+\frac3{8\sigma_0^4}
                         +\frac{49}{24\sigma_0^6}\right\}>0.
\end{equation}
This is the asymptotic coefficient for this path and design, not an
optimal constant for the all-pairs modulus or support boundary.

\section{The cubic square-root density chart}\label{app:chart}

This appendix establishes the regularity needed to compare conditional laws
in centered coordinates.  The argument concerns normalized respondent
densities first; the response probability and missing-data atom are added
at the end.  Write
\[
 \phi_s(w)=(2\pi s)^{-1/2}\exp\{-w^2/(2s)\},\qquad s>0,
\]
where the subscript is a variance.  Let $T_t$ have the standard normal law
conditional on $T_t>-t$, and set $Z_t=T_t-\lambda(t)$.  Differentiating its
moment generating function gives
\[
 \mathbb{E}Z_t=0,\qquad \mathbb{E}Z_t^2=v(t)=1+\lambda'(t),
 \qquad \mathbb{E}Z_t^3=\kappa(t).
\]
In particular $v(t)>0$, because the truncated law is nondegenerate.
Lemma~\ref{lem:curvature} gives $h(t)=-\lambda'(t)>0$, so $v(t)<1$.

For $z=(M,V,a,t)$ with $s(z)=V-a^2v(t)>0$, define $p_z$ to be the density of
\begin{equation}\label{eq:chart-law}
 M+aZ_t+\sqrt{s(z)}W,\qquad W\sim N(0,1),\quad W\text{ independent of }T_t,
\end{equation}
and put $r_z=\sqrt{p_z}$.  The parametrization fixes respondent mean $M$
and variance $V$.  The real cubic coordinate is $q=a^3$.

\begin{lemma}[Cubic respondent chart]\label{lem:chart}
The map
\[
 \mathcal{S}(M,V,q,t)=r_{(M,V,\operatorname{cbrt}(q),t)}
\]
is continuously differentiable as an $L^2(dy)$-valued map on the open set
$V-|q|^{2/3}v(t)>0$.  At $q=0$ its derivative is
\begin{equation}\label{eq:chart-qzero}
 \partial_q\mathcal{S}(M,V,0,t)(y)
 =\frac{\kappa(t)}{12V^{3/2}}
 H_3\!\left(\frac{y-M}{\sqrt V}\right)\sqrt{\phi_V(y-M)},
 \qquad H_3(u)=u^3-3u.
\end{equation}
Fix finite $M_*,V_*,A_*,T_*$ and $\eta>0$.  On the set
\begin{equation}\label{eq:chart-bounded-domain}
 |M|\le M_*,\quad 0<V\le V_*,\quad |a|\le A_*,\quad |t|\le T_*,
 \quad V-a^2v(t)\ge\eta,
\end{equation}
the first derivatives in $(M,V,q,t)$ have uniformly bounded $L^2$ norms.
For any two endpoints in this set,
\begin{equation}\label{eq:chart-lipschitz}
 \|r_z-r_{z'}\|_2\le C\bigl(
 |M-M'|+|V-V'|+|a^3-a'^3|+|t-t'|\bigr),
\end{equation}
where $C$ depends only on the displayed bounds.
\end{lemma}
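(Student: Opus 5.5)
The proof will rest on an explicit closed form for $p_z$ and a Taylor expansion in $a$ whose first two orders vanish because the chart fixes the mean and the variance. Put $\sigma^2=s(z)+a^2$, $m=M-a\lambda(t)$ and $\rho=a/\sigma$. Completing the square, as in \eqref{eq:geom-cgf}, shows that
\[
 p_z(y)=\frac{1}{\sigma\Phi(t)}\phi(e)\Phi\!\left(\frac{t+\rho e}{\sqrt{1-\rho^2}}\right),\qquad e=\frac{y-m}{\sigma}.
\]
Here $\sqrt{1-\rho^2}=\sqrt{s}/\sigma\ge\sqrt{\eta}/\sigma$. This formula is jointly smooth in $(M,V,a,t)$ on $s(z)>0$, including at $a=0$.

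\textbf{Envelopes.} I will repeat the argument of Appendix~\ref{app:hardpath} with $(M,V,a,t)$ in place of $r$. The bound $\lambda(w)\le C(1+|w|)$ for $w<0$ and the identities for $\lambda',\lambda''$ imply that the partial derivatives of $\log p_z$ up to order three are bounded by a fixed polynomial in $|y|$. This holds uniformly on the enlarged domain obtained from \eqref{eq:chart-bounded-domain} by replacing $\eta$ with $\eta/2$. The denominators $\sigma$, $\sqrt s$ and $\Phi(t)$ are bounded below there. The square-root chain rule then gives a common envelope $C(1+|y|)^Ne^{-cy^2}$ for all partial derivatives of $r_z$ up to order three. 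By dominated convergence, $z\mapsto r_z$ is $C^3$ into $L^2(dy)$ in the variables $(M,V,a,t)$, with uniformly bounded derivatives.

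\textbf{The cubic coordinate.} The only delicate point is $\partial_q=(3a^2)^{-1}\partial_a$ near $a=0$. The cumulant generating function of the law \eqref{eq:chart-law} is
\[
 Ms+\tfrac12Vs^2+\log\Phi(t+as)-\log\Phi(t)-a\lambda(t)s+\tfrac12a^2\lambda'(t)s^2.
\]
Its Taylor expansion in $a$ is $Ms+Vs^2/2+a^3\kappa(t)s^3/6+O(a^4)$, uniformly for $s$ in bounded sets. Hence $\partial_a p$ and $\partial_a^2p$ vanish at $a=0$, and $\partial_a^3p|_{a=0}=\kappa(t)(-\partial_y)^3\phi_V(y-M)$. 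These identities are justified by the Laplace/Fourier uniqueness argument of Appendix~\ref{app:hardpath}, with the Gaussian envelopes permitting differentiation under the integral. Since the base density $\phi_V(y-M)$ is positive, the same vanishing transfers to $\partial_ar$ and $\partial_a^2r$ at $a=0$, and
\[
 \partial_a^3r\big|_{a=0}=\frac{\kappa(t)V^{-3/2}H_3(u)\phi_V(y-M)}{2\sqrt{\phi_V(y-M)}},\qquad u=\frac{y-M}{\sqrt V},
\]
using $(-\partial_y)^3\phi_V(y-M)=V^{-3/2}H_3(u)\phi_V(y-M)$. Taylor's integral formula then gives
\[
 \partial_ar(a)=a^2\int_0^1(1-w)\,\partial_a^3r(aw)\,dw.
\]
Therefore
\[
 \partial_q\mathcal S=\frac13\int_0^1(1-w)\,\partial_a^3r(aw)\,dw,\qquad a=\operatorname{cbrt}(q).
\]
This is continuous in $L^2$ across $q=0$ and is bounded by the third-derivative envelope. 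Its value at $q=0$ is $\partial_a^3r(0)/6$, which is exactly \eqref{eq:chart-qzero}. Continuity of $\partial_M,\partial_V,\partial_t$ composed with $\operatorname{cbrt}$ is immediate, so $\mathcal S$ is $C^1$ with uniformly bounded first derivatives.

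\textbf{Lipschitz bound.} For \eqref{eq:chart-lipschitz}, the domain in $(M,V,q,t)$ need not be convex. By uniform continuity of $s$ in these coordinates on the compact box, there is $\delta>0$ such that any segment of length at most $\delta$ starting in the $\eta$-domain stays in the $\eta/2$-domain. The mean value inequality then applies along such a segment. If the two endpoints are farther apart than $\delta$, use $\|r_z-r_{z'}\|_2\le\sqrt2\le(\sqrt2/\delta)\times$distance. Finally, $|q-q'|=|a^3-a'^3|$.

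\textbf{Main obstacle.} I expect the hardest step to be the $q$-derivative at and near zero: showing that the first two $a$-derivatives of the root density vanish identically in $L^2$, not just that the first two moments match. I will handle this with the transform-uniqueness argument of Appendix~\ref{app:hardpath}.
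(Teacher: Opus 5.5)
Your proof is correct, but it reaches the two technical ingredients by a different route from the paper.

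\textbf{Domination.} The paper does not start from the closed form. It writes $p_z$ as the marginal of a joint kernel $G_z(b,y)$ on the fixed half-line $b>0$, whose log-derivatives are quadratic polynomials. It then controls $D^\alpha p_z/p_z$ through moments of the truncated normal law of $B$ given $Y=y$, using a tilting/covariance argument. You instead recognise the chart law as the Heckman respondent density with $\sigma^2=V+a^2h(t)$, $m=M-a\lambda(t)$ and $\rho=a/\sigma$. You then bound the derivatives of $\log p_z$ directly from the linear growth of $\lambda$ on the negative axis, exactly as in Appendix~\ref{app:hardpath}. That is shorter and reuses machinery already in the paper.

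\textbf{Cancellations.} The paper uses the heat-equation identity $\partial_a k=-Zk^{(1)}-avk^{(2)}$ to write $p_a,p_{aa},p_{aaa}$ as explicit expectations for every $a$. Then $p_a(0)=p_{aa}(0)=0$ follows at once from $\E Z=0$ and $\E Z^2=v$. This makes visible that the cancellation is precisely the chart's mean/variance compensation, and it involves no transform inversion. Your CGF expansion with Laplace/Fourier uniqueness yields the same identities only at $a=0$. That is all the Taylor integral formula needs, but it costs the extra interchange of $a$-derivatives with $e^{sy}$-weighted integrals, which your Gaussian envelopes do supply.

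\textbf{Lipschitz step.} Your uniform-continuity choice of $\delta$ replaces the paper's explicit radius $\delta_0$. The constant still depends only on the box, and segments stay in the box because all its constraints except the margin are convex in $(M,V,q,t)$.

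\textbf{Sign slip and two small gaps.} The displayed CGF has a sign error. Since
$\tfrac12 s(z)s^2+\tfrac12a^2s^2=\tfrac12Vs^2-\tfrac12a^2\lambda'(t)s^2$,
the last term must be $-\tfrac12a^2\lambda'(t)s^2$, that is $+\tfrac12a^2h(t)s^2$. With your sign the $a^2$ terms double instead of cancelling. Your stated expansion $Ms+Vs^2/2+a^3\kappa(t)s^3/6+O(a^4)$ is correct for the corrected expression.

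You should also state two small steps.
\begin{enumerate}
\item The segment from $a$ to $0$, with $(M,V,t)$ fixed, stays feasible, because $V-u^2v(t)$ increases as $|u|$ decreases.
\item Differentiability exactly at $q=0$ does not follow from substituting $a=0$ into a formula derived for $q\ne0$. Either compute the difference quotient
$(\mathcal S(q)-\mathcal S(0))/q=\tfrac12\int_0^1(1-w)^2\partial_a^3r(aw)\,dw\to\partial_a^3r(0)/6$,
as the paper does, or invoke the fact that a continuous Hilbert-valued map whose derivative off $q=0$ has a limit is differentiable there with that limit.
\end{enumerate}
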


\begin{proof}
We give the differentiability and domination argument explicitly, since
pointwise smoothness alone does not justify differentiation of square
roots in $L^2$.

\paragraph{Fixed support and joint domination.}
Set $B=T_t+t$, so $B>0$, and write
\[
 z_t(b)=b-t-\lambda(t),\qquad w=y-M-a z_t(b).
\]
The joint density of $(B,Y)$ in \eqref{eq:chart-law} is
\begin{equation}\label{eq:chart-fixed-kernel}
 G_z(b,y)=\mathbf{1}_{\{b>0\}}
 \frac{\phi(b-t)}{\Phi(t)}\phi_{s(z)}(y-M-a z_t(b)),
 \qquad p_z(y)=\int_0^\infty G_z(b,y)\,db.
\end{equation}
Work first on an enlarged bounded set with $s\ge\eta/2$; derivatives at
its boundary are taken in a slightly larger open set with $s>\eta/4$.
All bounds below remain valid after this enlargement.  On $b>0$,
\begin{equation}\label{eq:chart-logjoint}
 \log G_z=-\log\Phi(t)-\frac{(b-t)^2}{2}
 -\frac12\log s-\frac{w^2}{2s}-\log(2\pi).
\end{equation}
Every parameter derivative of this expression of total order at most
three is a polynomial in $(b,y)$ of degree at most two, with uniformly
bounded coefficients.  Indeed, $w$ is affine in $(b,y)$, its parameter
derivatives are also affine, $s$ stays positive, and all needed
derivatives of $\lambda$ and $v$ are bounded on the enlarged $t$ interval.
It follows by repeated differentiation of the exponential that
$D^\alpha G_z/G_z$ is a polynomial of degree at most $2|\alpha|$ with
bounded coefficients, for $1\le|\alpha|\le3$.

For the joint Gaussian bound, let $B_0=b-t$ and
$Y_0=y-(M-a\lambda(t))$.  The exponent's quadratic form is
$B_0^2+(Y_0-aB_0)^2/s$.  If $s\le S_*$ on the enlarged set, then
\[
 B_0^2+Y_0^2
 \le(1+2A_*^2)B_0^2+2(Y_0-aB_0)^2
 \le\max\{1+2A_*^2,2S_*\}
 \left(B_0^2+\frac{(Y_0-aB_0)^2}{s}\right),
\]
with enlarged constants if necessary.  Both translation centers are
bounded.  The prefactor in \eqref{eq:chart-fixed-kernel} is bounded by
the positive lower bounds on $s$ and $\Phi(t)$.  Consequently, for fixed
$c,C>0$,
\begin{equation}\label{eq:chart-joint-envelope}
 G_z(b,y)\le Ce^{-c(b^2+y^2)},\qquad
 |D^\alpha G_z(b,y)|\le
 C(1+b+|y|)^6e^{-c(b^2+y^2)},\quad |\alpha|\le3.
\end{equation}
These are common integrable envelopes.  They justify differentiating
\eqref{eq:chart-fixed-kernel} under the integral through order three,
and show that the resulting derivatives of $p_z(y)$ are continuous.
Differentiating the Gaussian factor in $y$ up to order six is justified
by the same argument, with a larger fixed polynomial degree.

\paragraph{Conditional moments and square-root domination.}
An integrable bound on $D^\alpha p_z$ does not by itself control its
ratio to $p_z$ in the tails.  To obtain that control, put
\[
 \mu=M-a\lambda(t),\qquad D=s+a^2.
\]
Before truncation the pair $(B,Y)$ in the numerator of
\eqref{eq:chart-fixed-kernel} is Gaussian.  Conditional on $Y=y$ and
then on $B>0$, its first coordinate has the distribution
\begin{equation}\label{eq:chart-conditional-normal}
 B\mid Y=y\ \sim\ N\!\left(
 t+\frac{a}{D}(y-\mu),\frac{s}{D}\right)\ \text{conditioned on }B>0.
\end{equation}
The conditional mean is affine in $y$ with bounded coefficients, and
the conditional variance has positive lower and finite upper bounds.
For later use, if $B\sim N(m,\tau^2)$ conditioned on $B>0$, with
$0<\tau\le\tau_*$, then for every fixed integer $k\ge0$,
\begin{equation}\label{eq:chart-truncated-moments}
 \mathbb{E}B^k\le C_k(1+|m|^k).
\end{equation}
For $m\ge0$ the conditioning probability is at least $1/2$, so the
claim follows from ordinary Gaussian absolute moments.  For $m<0$ the
conditional density on the positive half-line is a half-normal density
tilted by the decreasing function $b\mapsto e^{mb/\tau^2}$.  This tilt
decreases every increasing moment: for increasing $F$ and decreasing
$W$, two independent half-normal variables $B,B'$ satisfy
\[
 2\operatorname{Cov}(F(B),W(B))
 =\mathbb{E}\{(F(B)-F(B'))(W(B)-W(B'))\}\le0.
\]
Taking $F(b)=b^k$ proves the bound by the half-normal moment.  Applied
to \eqref{eq:chart-conditional-normal}, this yields
\[
 \mathbb{E}_z[(1+B+|y|)^k\mid Y=y]\le C_k(1+|y|)^k.
\]
The differentiation already justified above and conditional expectation
therefore give, for $1\le|\alpha|\le3$,
\begin{equation}\label{eq:chart-density-ratios}
 \frac{D^\alpha p_z(y)}{p_z(y)}
 =\mathbb{E}_z\!\left[
 \frac{D^\alpha G_z(B,y)}{G_z(B,y)}\,\middle|\,Y=y\right],
 \qquad
 \left|\frac{D^\alpha p_z(y)}{p_z(y)}\right|
 \le C(1+|y|)^{2|\alpha|}.
\end{equation}
The marginal density is strictly positive, as is also seen from
\begin{equation}\label{eq:chart-explicit-marginal}
 p_z(y)=\frac{\phi_D(y-\mu)}{\Phi(t)}
 \Phi\!\left(\frac{t+a(y-\mu)/D}{\sqrt{s/D}}\right),
 \qquad 0<p_z(y)\le Ce^{-cy^2}.
\end{equation}
The last bound uses the uniform variance and location bounds for
$\phi_D(y-\mu)$, together with the lower bound for $\Phi(t)$.

For $r=\sqrt p$, direct differentiation gives
\begin{align}
 r_a&=\frac{p_a}{2\sqrt p},\qquad
 r_{aa}=\frac{p_{aa}}{2\sqrt p}-\frac{p_a^2}{4p^{3/2}},
 \label{eq:chart-root-first-two}\\
 r_{aaa}&=\frac{p_{aaa}}{2\sqrt p}
 -\frac{3p_a p_{aa}}{4p^{3/2}}+\frac{3p_a^3}{8p^{5/2}}.
 \label{eq:chart-root-third}
\end{align}
For $\xi=M,V,t$, $r_\xi=p_\xi/(2\sqrt p)$.  Combining
\eqref{eq:chart-density-ratios} with \eqref{eq:chart-explicit-marginal}
shows that one function
\begin{equation}\label{eq:chart-Ltwo-envelope}
 \mathcal{G}(y)=C(1+|y|)^6e^{-cy^2/2}\in L^2(dy)
\end{equation}
dominates $|r|,|r_a|,|r_{aa}|,|r_{aaa}|,|r_M|,|r_V|,|r_t|$
throughout the enlarged set.  Each derivative is pointwise continuous,
so domination by $\mathcal{G}^2$ proves its $L^2$ continuity.  Applying
the pointwise fundamental theorem of calculus along a sufficiently
small parameter segment, followed by the same domination, shows that
these are the actual $L^2$ derivatives.  In particular $r$ is three
times continuously differentiable in $a$, with the indicated
derivatives jointly continuous in the other parameters.

\paragraph{Three derivatives and the two cancellations.}
Keep $M,V,t$ fixed and abbreviate $Z=Z_t$, $v=v(t)$ and
$k=\phi_{V-a^2v}(y-M-aZ)$.  Write $k^{(j)}=\partial_y^j k$.
The heat equation $\partial_s\phi_s=\tfrac12\partial_y^2\phi_s$
implies $\partial_a k=-Zk^{(1)}-avk^{(2)}$.  Repeated differentiation,
with the integrals justified by \eqref{eq:chart-joint-envelope}, gives
\begin{align}
 p_a&=\mathbb{E}[-Zk^{(1)}-avk^{(2)}],\label{eq:chart-pa}\\
 p_{aa}&=\mathbb{E}[(Z^2-v)k^{(2)}+2avZk^{(3)}
                         +a^2v^2k^{(4)}],\label{eq:chart-paa}\\
 p_{aaa}&=\mathbb{E}\bigl[-(Z^3-3vZ)k^{(3)}
                  -3av(Z^2-v)k^{(4)}\notag\\
 &\hspace{37mm}{}-3a^2v^2Zk^{(5)}-a^3v^3k^{(6)}\bigr].
 \label{eq:chart-paaa}
\end{align}
At $a=0$ the kernel is independent of $Z$, whence
\begin{equation}\label{eq:chart-cancellations}
 p_a(0)=-\mathbb{E}Z\,\phi_V'=0,\qquad
 p_{aa}(0)=(\mathbb{E}Z^2-v)\phi_V''=0,
 \qquad r_a(0)=r_{aa}(0)=0.
\end{equation}
These are identities for derivatives of the full density and its
square root, not conclusions drawn solely from equality of two moments.
The variance compensation in \eqref{eq:chart-law} is essential.
Moreover,
\[
 \log\mathbb{E}e^{uZ}
 =\frac{u^2}{2}+\log\Phi(t+u)-\log\Phi(t)-u\lambda(t),
\]
so $\mathbb{E}Z^3=\kappa(t)$ and
\begin{equation}\label{eq:chart-third-at-zero}
 p_{aaa}(0)=-\kappa(t)\phi_V'''(y-M),\qquad
 r_{aaa}(0)=\frac{\kappa(t)}{2V^{3/2}}
 H_3\!\left(\frac{y-M}{\sqrt V}\right)\sqrt{\phi_V(y-M)}.
\end{equation}

\paragraph{The derivative through zero.}
If $(M,V,a,t)$ is feasible, its segment to $a=0$ is feasible with at
least the same variance margin, because
\begin{equation}\label{eq:chart-a-segment}
 V-u^2v(t)\ge V-a^2v(t),\qquad |u|\le|a|.
\end{equation}
Thus the Hilbert-space fundamental theorem of calculus and
\eqref{eq:chart-cancellations} give, for either sign of $a$,
\begin{equation}\label{eq:chart-integral-derivative}
 r_a(a)=a^2\int_0^1(1-u)r_{aaa}(ua)\,du.
\end{equation}
If $B$ is the uniform $L^2$ bound on $r_{aaa}$, this implies
$\|r_a(a)\|_2\le Ba^2/2$.  For $q\ne0$ and $a=\operatorname{cbrt}(q)$,
\begin{equation}\label{eq:chart-q-derivative}
 \mathcal{S}_q(M,V,q,t)=\frac{r_a(a)}{3a^2}
 =\frac13\int_0^1(1-u)r_{aaa}(ua)\,du,
 \qquad \|\mathcal{S}_q\|_2\le B/6.
\end{equation}
Taylor's integral formula also gives
\begin{equation}\label{eq:chart-zero-difference-quotient}
 \frac{\mathcal{S}(M,V,q,t)-\mathcal{S}(M,V,0,t)}q
 =\frac12\int_0^1(1-u)^2r_{aaa}(ua)\,du
 \longrightarrow\frac16r_{aaa}(0)
\end{equation}
in $L^2$, from both signs of $q$.  This proves
\eqref{eq:chart-qzero}.  Joint continuity of $r_{aaa}$ and its common
envelope show that \eqref{eq:chart-q-derivative} extends continuously
to this value as $(M,V,q,t)$ approaches any point with $q=0$.
The ordinary partial derivatives in $M,V,t$ are continuous by
\eqref{eq:chart-Ltwo-envelope} and continuity of the real cube root.
Continuous partial derivatives on this open domain give joint
$L^2$-valued $C^1$ regularity.  Integrating the bounded $q$ derivative
at fixed $M,V,t$ also gives a Lipschitz bound across zero: when the
endpoint signs differ, $|q-q'|=|q|+|q'|$ accounts for both segments.

\paragraph{Local interpolation and the endpoint bound.}
It remains to justify interpolation when all four coordinates move.
Let
\[
 \Delta=|M-M'|+|V-V'|+|q-q'|+|t-t'|,
 \quad v_* =\max_{|t|\le T_*}v(t),
 \quad L_v=\max_{|t|\le T_*}|v'(t)|.
\]
Interpolate linearly in $(M,V,q,t)$, using subscript $u\in[0,1]$.
Since $|q_u|\le A_*^3$ and
$\bigl||q_u|^{2/3}-|q|^{2/3}\bigr|\le|q_u-q|^{2/3}$,
\begin{equation}\label{eq:chart-interpolation-margin}
 V_u-|q_u|^{2/3}v(t_u)
 \ge\eta-(1+A_*^2L_v)\Delta-v_*\Delta^{2/3}.
\end{equation}
An explicit sufficient neighborhood is
\begin{equation}\label{eq:chart-interpolation-radius}
 \delta_0=\min\left\{1,\frac{\eta}{4(1+A_*^2L_v)},
                      \left(\frac{\eta}{4v_*}\right)^{3/2}\right\}>0.
\end{equation}
When $\Delta\le\delta_0$, the whole segment has convolution variance
at least $\eta/2$.  All its ordinary parameter bounds are retained,
and its auxiliary segments to $a=0$ retain the same margin by
\eqref{eq:chart-a-segment}.  The uniformly bounded first derivatives
can therefore be integrated on this segment to prove
\eqref{eq:chart-lipschitz}.  When $\Delta>\delta_0$, two normalized
densities have square-root distance at most $\sqrt2$, and
$\sqrt2\le(\sqrt2/\delta_0)\Delta$ gives the same result with a larger
constant.  A globally feasible straight segment between distant
endpoints is neither needed nor asserted.
\end{proof}

For parameters of the regression model, the chart coordinates are
\[
 M=m+a\lambda(t),\qquad V=\sigma^2-a^2h(t),\qquad q=a^3,
\]
and their convolution variance satisfies the uniform bound
\begin{equation}\label{eq:chart-model-margin}
 V-a^2v(t)=\sigma^2-a^2
 =\sigma^2(1-\rho^2)
 \ge\sigma_-^2(1-\rho_+^2)>0.
\end{equation}
All other chart bounds follow from the fixed coefficient and scale
bounds.  In the direct sum $\mathbb{R}\oplus L^2(dy)$ the square root
of the full conditional observed law is
\begin{equation}\label{eq:chart-observed-root}
 \left(\sqrt{\Phi(-t)},\ \sqrt{\Phi(t)}\,
       \mathcal{S}(M,V,q,t)\right).
\end{equation}
Both scalar square-root probabilities have bounded derivatives on the
fixed $t$ interval, and $\|\mathcal{S}\|_2=1$.  Thus
\eqref{eq:chart-lipschitz} also bounds the full conditional Hellinger
distance, with the same four coordinate differences and a changed
constant.  This step includes the nonresponse atom.

The interpolated densities used in the proof need not correspond to a
common affine regression parameter in the original coefficient box.
They are valid auxiliary densities with a controlled positive variance.
All actual statistical candidates used below remain in the original
parameter class.  The proof establishes $C^1$ regularity in the cubic
coordinate; it makes no claim of two derivatives in $q$ or a uniformly
invertible Fisher information matrix.

\section{Finite coordinates and local packing}\label{app:embedding}

Use the coordinates and map $F:\Theta\to\R^K$ defined in
\eqref{eq:overview-coordinates}--\eqref{eq:overview-F}.
The norm of $F$ is the ordinary Euclidean norm.  For symmetric matrices,
the Euclidean norm of $\operatorname{vech}$ lies between the Frobenius
norm divided by $\sqrt2$ and the Frobenius norm.  Matrix estimates below
can therefore use the Frobenius norm without changing the conclusion.

\begin{lemma}[Two metric comparisons]\label{lem:embedding}
There are constants $0<c_E\le C_E<\infty$, depending only on the fixed
model class, such that every $\theta,\theta'\in\Theta$ satisfies
\begin{equation}\label{eq:embed-two-sided}
 c_E\|F(\theta)-F(\theta')\|_2
 \le h_\nu(\theta,\theta')
 \le C_E\|F(\theta)-F(\theta')\|_2.
\end{equation}
The map $F$ is continuous and one-to-one.  For any common actual design
density $f$ in the bounded-density class, the two corresponding bounds
for $H_f$ follow by multiplying the left constant by $\sqrt\ell$ and
the right constant by $\sqrt u$.
\end{lemma}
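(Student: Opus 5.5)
The upper comparison is a Lipschitz bound, and I will prove it through the chart of Lemma~\ref{lem:chart}. By \eqref{eq:chart-observed-root} and \eqref{eq:chart-lipschitz},
\[
h_\nu^2\le C\int\bigl(|\Delta M(x)|+|\Delta V(x)|+|\Delta q|+|\Delta t(x)|\bigr)^2\,d\nu.
\]
Here $q=a^3$ is constant in $x$. The index satisfies $\Delta t=\Delta\gamma_0+\Delta\gamma_s^Tx$, which is bounded by $C\|\Delta\gamma\|$. The remaining work is to show that $\Delta M$ and $\Delta V$ are controlled uniformly by $\|\Delta F\|$.

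For $\Delta M$ and $\Delta V$ I Taylor expand $\lambda(t(x))$ and $h(t(x))$ about $t_0$ in the direction $g^Tx$:
\[
M(x)=\eta_0+\eta_s^Tx+\tfrac12 a\,\lambda''(t_0)\,x^TU_2x/a+\dots
\]
More precisely, I write $M(x)=\eta_0+\eta_s^Tx+a(g^Tx)^2R_M(t_0,g^Tx)$, where $R_M$ is a smooth integral remainder, and $a(g^Tx)^2=x^TU_2x$. Similarly,
\[
V(x)=v_0-a^2\bigl(h(t(x))-h(t_0)\bigr)=v_0-(Z^Tx)\,R_V(t_0,g^Tx),
\]
with $R_V$ smooth. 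Differences of these products are bounded by differences of $(U_2,Z,\gamma)$ times bounded factors, plus bounded coefficients times $\Delta\gamma$. This uses $|U_2|\le C$ and $|Z|\le C$, together with the uniform smoothness of $R_M$ and $R_V$ on the compact index range. Hence $\sup_x(|\Delta M|+|\Delta V|)\le C\|\Delta F\|$, which gives the right inequality in \eqref{eq:embed-two-sided}.

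The left inequality needs a lower bound on $h_\nu$, and I reuse Lemma~\ref{lem:moments} with $f=1$. It gives $\|\Delta\gamma\|$, $\|\Delta M\|_{L^2(\nu)}$, $\|\Delta V\|_{L^2(\nu)}$, $\|\Delta K_3\|_{L^2(\nu)}$ and $|\Delta q|$ all at most $Ch_\nu$. The task is then to recover $\eta_0,\eta_s,v_0,U_2,Z$ linearly from these functional differences. For $U_2$, the nonaffine part of $M$ is $x^TU_2x\cdot\lambda''(t_0)/2+O(|a|\,|g|^3)$. I project $\Delta M$ onto the quadratic polynomials, using test functions in the style of Lemma~\ref{lem:projection}. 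Then the identity $\Delta(ag g^T)=\Delta U_2$ together with the already controlled $\Delta\gamma$ should give $\|\Delta U_2\|\le C h_\nu$. The same argument applies to $Z$: I use the affine slope of $\Delta V$, namely $-\Delta(a^2h'(t_0)g)$ plus higher-order terms.

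\textbf{Main obstacle.} The higher-order remainder terms, of type $a g^{\otimes k}$ with $k\ge3$, must be dominated by $\|\Delta F\|$ and not merely be small. I plan to handle this by a compactness/contradiction argument. Suppose the ratio $h_\nu/\|\Delta F\|$ tends to $0$ along some sequence. If $\|\Delta F\|$ stays bounded below, continuity and injectivity contradict this: injectivity of $F$ follows from the identification argument after \eqref{eq:cumulants}, since $F$ determines $\gamma$, then $a$ via $q$, and then $\beta,\sigma$. Otherwise both endpoints converge to a common limit. If that limit has $g\ne0$ or $a\ne0$, then near it $F$ is a smooth injective immersion, and the chart gives a nondegenerate derivative. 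If $a=g=0$, I would blow up by normalizing the differences. The leading root-density directions are Hermite functions multiplied by $1$, $x$ and $x^Tx$-type polynomials, weighted by $\kappa(t_0)$ and $\lambda''$ factors, with the same structure as \eqref{eq:path-root-coefficient}. Linear independence of these directions in $L^2(d\nu\,dy)$ shows that the normalized limit is nonzero, which is the contradiction. The transfer to $H_f$ is immediate from \eqref{eq:design-comparison}.
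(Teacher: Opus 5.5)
Your upper bound is the paper's own argument: the chart Lipschitz bound applied through \eqref{eq:chart-observed-root}, plus the exact integral-remainder forms $M=\eta_0+\eta_s^Tx+U_2:B_\gamma(x)$ and $V=v_0-Z^TC_\gamma(x)$, whose factors are bounded and Lipschitz in $\gamma$. The lower bound, however, is not actually proved. Your ``main obstacle'' comes from reading $\Delta U_2$ off the quadratic part of $\Delta M$. Differences of the higher-order terms are of order $\|g\|^{k-2}\|\Delta U_2\|+\|\Delta\gamma\|$, which is not small relative to $\|\Delta U_2\|$ unless $\|g\|$ is small.

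The compactness fallback then rests on steps you assert but do not prove:
\begin{itemize}
\item Near a limit with $a\neq0$ or $g\neq0$, Lemma~\ref{lem:chart} supplies only $C^1$ regularity, not an injective derivative. Nondegeneracy there is a separate fact, which you could obtain from Proposition~\ref{prop:strong}.
\item At $a=g=0$ the blow-up needs a first-order expansion in $\Delta F$ that is uniform when both endpoints move. You would have to exhibit the root density as a $C^1$ map on an open subset of $\R^K$ in the $F$ coordinates, with injective derivative at that point. This can be done using the chart and the integral-remainder identities. The derivative directions are $\mathrm{He}_0$ times $1,x_j$ (with the atom), $\mathrm{He}_1$ times $1,x_j,x_jx_k$, $\mathrm{He}_2$ times $1,x_j$, and $\mathrm{He}_3$; these are exactly $K$ independent functions. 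But you do not carry this out.
\item The case with $\|\Delta F\|$ bounded below uses identifiability of the observed law, not injectivity of $F$.
\end{itemize}

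The missing idea, which removes the obstacle entirely, is an algebraic splitting combined with exact residual bounds that involve no Taylor remainder. First, write $\Delta U_2=(a-a')gg^T+a'(gg^T-g'g'^T)$. The second term is $O(\|\Delta\gamma\|)$. The first is controlled by \eqref{eq:geom-vg}, $|a-a'|\|g\|^2\le Ch_\nu$, which comes from projecting $B=\Delta M-a'\{\lambda(t)-\lambda(t')\}$; its nonaffine part is exactly $(a-a')(I-\Pi)\lambda(t)$. Second, for $Z$, project the identity $\Delta V+a'^2\{h(t)-h(t')\}=\Delta\sigma^2-(a^2-a'^2)h(t)$ off constants. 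Combined with $\|(I-\Pi_0)h(t)\|_{L^2(\nu)}\ge\kappa_*\|g\|/\sqrt{3d}$ (integration by parts against $1-x_j^2$), this gives $|a^2-a'^2|\|g\|\le Ch_\nu$. Then $\Delta Z=(a^2-a'^2)g+a'^2\Delta g$ yields $\|\Delta Z\|\le Ch_\nu$. Only after this does the paper subtract the exact expansions. What remains is the affine function $\Delta\eta_0+\Delta\eta_s^Tx$ and the constant $\Delta v_0$, each $O(h_\nu)$ in $L^2(\nu)$, and the Gram identity finishes. This route is constructive and needs neither a case analysis at $a=g=0$ nor any Fisher-information nondegeneracy.
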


\begin{proof}
Throughout this proof $\Delta$ denotes an unprimed quantity minus its
primed counterpart, $\delta=h_\nu(\theta,\theta')$, and $\|\cdot\|_{L^2(\nu)}$
is taken under the uniform reference law, not the unknown actual
design law.  Lemmas~\ref{lem:moments} and \ref{lem:projection}, applied
with $f=1$, give
\begin{align}
 &\|\Delta\gamma\|_2+\|\Delta t\|_\infty
 +\|\Delta M\|_{L^2(\nu)}+\|\Delta V\|_{L^2(\nu)}
 +|a^3-a'^3|\le C\delta,\label{eq:embed-initial-bounds}\\
 &|a-a'|\|g\|_2^2\le C\delta.\label{eq:embed-U-prebound}
\end{align}
Here $M=m+a\lambda(t)$ and $V=\sigma^2-a^2h(t)$ are the respondent
mean and variance.  These estimates do not hold nuisance coordinates
fixed.  In particular,
\[
 \Delta U_2=(a-a')gg^T+a'(gg^T-g'g'^T),
\]
and boundedness of $a',g,g'$ shows that
\begin{equation}\label{eq:embed-U-bound}
 \|\Delta U_2\|_{\mathrm F}\le C\delta.
\end{equation}

An independent variance calculation is needed for $Z$; the preceding
bound on $U_2$ alone does not give it.  The variance identity yields
\begin{equation}\label{eq:embed-variance-difference}
 \Delta V+a'^2\{h(t)-h(t')\}
 =\Delta\sigma^2-(a^2-a'^2)h(t).
\end{equation}
Let $\Pi_0$ be the $L^2(\nu)$ projection onto constants.  If $g\ne0$,
choose $j$ with $|g_j|\ge\|g\|_2/\sqrt d$.  Conditional on the other
coordinates, integration by parts on $[-1,1]$ gives
\begin{equation}\label{eq:embed-first-projection}
 \mathbb{E}_\nu[X_jh(t)]
 =\frac{g_j}{2}\mathbb{E}_\nu[(1-X_j^2)h'(t)],
 \qquad
 |\mathbb{E}_\nu[X_jh(t)]|\ge\frac{\kappa_*|g_j|}{3}.
\end{equation}
The boundary term vanishes because $1-X_j^2$ is zero at both
endpoints; the lower bound uses $h'=-\kappa\le-\kappa_*$ and
$\mathbb{E}_\nu(1-X_j^2)=2/3$.  Since $\mathbb{E}_\nu X_j=0$ and
$\|X_j\|_{L^2(\nu)}=1/\sqrt3$, Cauchy--Schwarz gives
\begin{equation}\label{eq:embed-variance-residual}
 \|(I-\Pi_0)h(t)\|_{L^2(\nu)}
 \ge\frac{\kappa_*}{\sqrt{3d}}\|g\|_2.
\end{equation}
The same inequality is trivial if $g=0$.  The left side of
\eqref{eq:embed-variance-difference} has $L^2(\nu)$ norm at most
$C\delta$, by \eqref{eq:embed-initial-bounds} and boundedness of $h'$.
Projection onto the orthogonal complement of constants therefore gives
\[
 |a^2-a'^2|\|g\|_2\le C\delta.
\]
Consequently,
\begin{equation}\label{eq:embed-Z-bound}
 \Delta Z=(a^2-a'^2)g+a'^2(g-g'),\qquad
 \|\Delta Z\|_2\le C\delta.
\end{equation}

Taylor's integral formula yields exact, global identities on the cube:
\begin{align}
 M(x)&=\eta_0+\eta_s^Tx+U_2:B_\gamma(x),
 &B_\gamma(x)&=xx^T\int_0^1(1-u)\kappa(t_0+ug^Tx)\,du,
 \label{eq:embed-mean-expansion}\\
 V(x)&=v_0-Z^TC_\gamma(x),
 &C_\gamma(x)&=x\int_0^1h'(t_0+ug^Tx)\,du.
 \label{eq:embed-variance-expansion}
\end{align}
Here $A:B=\sum_{j,k}A_{jk}B_{jk}$.  The arguments of $\kappa$ and $h'$
remain in a fixed bounded interval, and $\|x\|_2\le\sqrt d$.
Their bounded first derivatives, together with
\[
 |(t_0+ug^Tx)-(t'_0+ug'^Tx)|
 \le\sqrt{d+1}\|\gamma-\gamma'\|_2,
\]
show directly that $B_\gamma,C_\gamma$ are uniformly bounded and
uniformly Lipschitz in $\gamma$, in supremum norm over the cube.

Subtract \eqref{eq:embed-mean-expansion} at the two endpoints and use
\eqref{eq:embed-initial-bounds}, \eqref{eq:embed-U-bound}, and the
just established bounds for $B_\gamma$.  The affine function
$\Delta\eta_0+\Delta\eta_s^Tx$ has $L^2(\nu)$ norm at most $C\delta$.
The exact Gram identity
\[
 \|c_0+c_s^Tx\|_{L^2(\nu)}^2=c_0^2+\|c_s\|_2^2/3
\]
then controls $|\Delta\eta_0|+\|\Delta\eta_s\|_2$ by $C\delta$.
Subtracting \eqref{eq:embed-variance-expansion} and using
\eqref{eq:embed-Z-bound} likewise controls $|\Delta v_0|$.  All
coordinates of $F$ have now been controlled, proving the left
inequality in \eqref{eq:embed-two-sided}.

For the other direction, the same exact expansions and boundedness of
the coordinates give
\begin{equation}\label{eq:embed-uniform-chart-distance}
 \sup_{x\in[-1,1]^d}
 \bigl\{|\Delta M(x)|+|\Delta V(x)|+|\Delta t(x)|+|\Delta q|\bigr\}
 \le C\|F(\theta)-F(\theta')\|_2.
\end{equation}
Every original endpoint has the positive convolution variance margin
\eqref{eq:chart-model-margin}, uniformly in $x$.  Lemma~\ref{lem:chart}
and \eqref{eq:chart-observed-root} therefore bound each conditional
observed-law Hellinger distance by the left side of
\eqref{eq:embed-uniform-chart-distance}.  Integrating its square under
$\nu$ proves the right inequality in \eqref{eq:embed-two-sided}.
The chart's local interpolation argument and distant-endpoint bound
justify using it here for arbitrary two endpoints.

Finally, continuity of $F$ follows from its formulas.  Its inverse on
its image is explicit: $q$ gives the unique real $a$, $\gamma$ gives
$t_0,g$, $\eta_0,\eta_s$ give $\beta_0,\beta_s$, and $v_0$ gives
$\sigma^2=v_0+a^2h(t_0)$.  The positive root gives $\sigma$ and then
$\rho=a/\sigma$.  Thus $F$ is one-to-one.  The comparisons with a
common $f$ follow from $\sqrt\ell\,h_\nu\le H_f\le\sqrt u\,h_\nu$.
\end{proof}

\begin{lemma}[Deterministic covering and local packing]\label{lem:packing}
For every $\varepsilon>0$, there is a deterministic finite ordered set
$\mathcal{N}_\varepsilon=\{\theta_1,\ldots,\theta_N\}\subset\Theta$
that is $\varepsilon$ separated in $h_\nu$ (distinct distances exceed
$\varepsilon$) and covers $\Theta$ with radius $2\varepsilon$.
For every center $\theta\in\Theta$ and $R\ge0$,
\begin{equation}\label{eq:embed-local-packing}
 \#\{i:h_\nu(\theta_i,\theta)\le R\}
 \le\left(1+\frac{2C_E R}{c_E\varepsilon}\right)^K.
\end{equation}
The same local count holds for any finite $\varepsilon$ separated set
in a fixed compact submodel.
\end{lemma}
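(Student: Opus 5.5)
The plan is to transfer the problem to the finite-dimensional coordinates of Lemma~\ref{lem:embedding}. There the count becomes a Euclidean volume comparison, after a greedy construction has produced the net.

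\emph{Construction.} The set $\Theta$ is a compact rectangle and $F$ is continuous with Lipschitz-type control $h_\nu\le C_E\|F(\theta)-F(\theta')\|_2$. The coordinates of $F$ are polynomial or smooth in the original parameters on the compact box, so $F$ is Lipschitz there. Hence a rectangular parameter grid of mesh $\tau$ small enough (depending on $\varepsilon$ and class constants) is an $\varepsilon$-cover of $\Theta$ in $h_\nu$. I will run through the grid in a fixed order and keep a point whenever its $h_\nu$ distance to every previously kept point exceeds $\varepsilon$. The retained set is $\varepsilon$-separated by construction. Any grid point that was discarded lies within $\varepsilon$ of a kept point. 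Any $\theta\in\Theta$ lies within $\varepsilon$ of a grid point. The triangle inequality (since $h_\nu$ is an $L^2$ distance of root densities) then gives the $2\varepsilon$ cover. The set is finite because the grid is, and it is deterministic and ordered.

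\emph{Local count.} Fix $\theta$ and $R$, and let $I=\{i:h_\nu(\theta_i,\theta)\le R\}$. By the left inequality in \eqref{eq:embed-two-sided}, each $F(\theta_i)$ with $i\in I$ lies in the Euclidean ball of radius $R/c_E$ around $F(\theta)$. By the right inequality, distinct indices satisfy $\|F(\theta_i)-F(\theta_j)\|_2\ge h_\nu(\theta_i,\theta_j)/C_E>\varepsilon/C_E$. The Euclidean balls of radius $\rho=\varepsilon/(2C_E)$ around these images are therefore disjoint, and they are contained in the ball of radius $R/c_E+\rho$ in $\R^K$. Comparing volumes gives
\[
 \#I\le\Bigl(\frac{R/c_E+\rho}{\rho}\Bigr)^K=\Bigl(1+\frac{2C_ER}{c_E\varepsilon}\Bigr)^K,
\]
which is \eqref{eq:embed-local-packing}. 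Injectivity of $F$ is not even needed for this step, since separation is inherited directly. The final sentence follows the same way: the argument used only $\varepsilon$-separation and the two-sided comparison, which hold on any subset of $\Theta$. For a fixed compact submodel with its own comparison constants, the same volume computation applies.

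\emph{Main obstacle.} The only delicate point is the choice of grid mesh guaranteeing the initial $\varepsilon$-cover. I would bound $\|F(\theta)-F(\theta')\|_2\le L_F\|\theta-\theta'\|$ on the box, using bounded derivatives of $\lambda$ and $h$ on the compact index range and the bounds on $a$, $g$ and $\sigma$. I would then take $\tau\le\varepsilon/(C_EL_F\sqrt{\dim})$. Everything else is the standard volumetric argument.
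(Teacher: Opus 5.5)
Your proposal is correct and follows the paper's proof: greedy thinning of a fine rectangular grid gives the $2\varepsilon$ cover, and the local count comes from disjoint Euclidean balls of radius $\varepsilon/(2C_E)$ around the $F$-images, all lying inside the ball of radius $R/c_E+\varepsilon/(2C_E)$. The only difference is how you choose the grid mesh: you get $h_\nu(\theta,\theta')\le C_EL_F\|\theta-\theta'\|_2$ from smoothness of $F$ on the convex box together with the upper comparison in Lemma~\ref{lem:embedding}, whereas the paper proves such a bound directly from the Gaussian complete-data pair $(Y,t(x)+U)$ and Hellinger contraction under the parameter-free observation map, which avoids relying on the cubic chart; both routes are valid because Lemma~\ref{lem:embedding} is already established at this point.
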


\begin{proof}
We first give a deterministic way to obtain the covering slack in the
full rectangular class.  There is a constant $L_0\ge1$ such that
\begin{equation}\label{eq:embed-original-lipschitz}
 h_\nu(\theta,\theta')\le L_0\|\theta-\theta'\|_2.
\end{equation}
To check this upper bound without an observed information inverse,
augment the observations, at fixed $x$, to $(Y,L)$, where $L=t(x)+U$.
This pair is Gaussian with mean $(m(x),t(x))$ and covariance
\[
 \Sigma=\begin{pmatrix}\sigma^2&\sigma\rho\\\sigma\rho&1\end{pmatrix}.
\]
Its determinant is at least
$\sigma_-^2(1-\rho_+^2)>0$, and its trace is at most
$\sigma_+^2+1$.  Hence both eigenvalues have fixed positive
lower and finite upper bounds.  The derivative of a Gaussian log
density along a unit parameter direction is the sum of a bounded
constant, a linear polynomial in the centered pair, and a quadratic
polynomial in that pair, with bounded coefficients.  Gaussian fourth
moments therefore bound its second moment uniformly in $x$ and the
parameter direction.  Since a square-root derivative equals one half
of the square root times this score, its $L^2$ norm is uniformly
bounded.  Polynomial times Gaussian envelopes justify differentiation
and integration on the parameter segment, exactly as for ordinary
derivatives in Appendix~\ref{app:chart}.  The original parameter box
is convex, so the segment stays in this uniformly nondegenerate set.
Integrating the derivative gives the complete-data version of
\eqref{eq:embed-original-lipschitz}.  The map taking $(Y,L)$ to the
response atom when $L\le0$ and to $(1,Y)$ when $L>0$ is independent of
the parameter.  Hellinger distance cannot increase under this map:
Cauchy--Schwarz on each fiber increases affinity.  Integrating under
$\nu$ proves \eqref{eq:embed-original-lipschitz}.

Choose a finite rectangular grid $G_\varepsilon\subset\Theta$ so that
every parameter lies within Euclidean distance $\varepsilon/L_0$ of
a grid point.  In a fixed lexicographic order, retain the first point
and thereafter retain a point exactly when its $h_\nu$ distance from
every previously retained point exceeds $\varepsilon$.  The resulting
set is $\varepsilon$ separated.  Each rejected grid point lies within
$\varepsilon$ of a retained point, and the original grid approximates
any parameter within $\varepsilon$ in $h_\nu$.  The triangle inequality
gives covering radius $2\varepsilon$.  This construction and its
strict-distance convention are fixed before data are observed.

For points in an $h_\nu$ ball of radius $R$, the first comparison in
\eqref{eq:embed-two-sided} places their $F$ images inside the Euclidean
ball of radius $R/c_E$ about $F(\theta)$.  The second comparison makes
their pairwise Euclidean distances exceed $\varepsilon/C_E$.  Thus
Euclidean balls of radius $\varepsilon/(2C_E)$ about those images are
disjoint and all lie in the ball of radius
$R/c_E+\varepsilon/(2C_E)$.  Comparison of $K$ dimensional volumes
gives \eqref{eq:embed-local-packing}; the images need not have interior
in $\mathbb{R}^K$ for these ambient balls to be used.

For a compact constrained submodel, a finite maximal separated set
exists by compactness and continuity of the metric.  Fix any such
set and its ordering independently of the data.  Maximality gives
covering radius $\varepsilon$, and the same ambient volume argument
gives the identical local bound.  Rectangular zero-coordinate or sign
constraints can instead use the explicit grid construction above.
\end{proof}

\section{Observable tests and the finite-net estimator}\label{app:testing}

This appendix proves Theorem~\ref{thm:estimator}. The only geometric
inputs are the metric comparison in Lemma~\ref{lem:embedding} and its
local packing consequence, Lemma~\ref{lem:packing}. All conditional
integrals below use the dominating measure $\mu$ on the disjoint union
of the nonresponse atom and the respondent line. In particular, no
test uses an unobserved outcome. The actual design density $f$ is fixed
within each experiment and is common to every law being compared.

\subsection{A conditionally centered pairwise test}

Use the candidates, statistic $Z_{ij}=\psi_{ij}-c_{ij}$, pair orientation,
and tie rule from Section~\ref{sec:estimator-construction}. For $k=i,j$,
write $e_k(x)=\int Q_k(o\mid x)\psi_{ij}(x,o)\,d\mu(o)$, so
$c_{ij}=(e_i+e_j)/2$ in \eqref{eq:overview-test}.
Both conditional branches are positive, making the denominator positive.
Continuity of the candidate densities and integration of the bounded
measurable integrand give Borel statistics. Their evaluation uses only
candidate laws and the observed $X$, without the design density $f$.

\begin{lemma}[Robust observable pairwise tests]\label{lem:pairtest}
For every common $f\in\mathcal F_{\ell,u}$, let
$d_{ij}=H_f(\theta_i,\theta_j)$. If a true parameter $s\in\Theta$
satisfies $H_f(s,\theta_i)\le d_{ij}/32$, then the test
\eqref{eq:test-definition} satisfies
\begin{equation}
 \Prob_s^{f,\otimes n}\{i\text{ loses to }j\}
 \le \exp(-n d_{ij}^2/400).
 \label{eq:test-robust}
\end{equation}
The corresponding assertion holds with $i$ and $j$ interchanged.
\end{lemma}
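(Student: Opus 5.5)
The plan is the classical Birgé argument for robust Hellinger tests: bound the conditional mean of $Z_{ij}$ under the true law away from zero by a multiple of $d_{ij}^2$, bound its variance by a multiple of $d_{ij}^2$, and finish with Bernstein's inequality for the i.i.d.\ bounded summands. Write $d=d_{ij}$ and $\epsilon=H_f(s,\theta_i)\le d/32$. The event that $i$ loses to $j$ is contained in $\{\sum_{r=1}^n Z_{ij}(X_r,O_r)\ge0\}$, whichever way the predetermined tie rule goes, so it suffices to bound the probability of that event. Since $|\psi_{ij}|\le1$ we have $|c_{ij}|\le1$ and $|Z_{ij}|\le2$. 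Every computation below is conditional on $x$ and is then integrated against the common $f\,d\nu$. Because $c_{ij}(x)$ is a deterministic function of $x$ built only from candidate laws, $f$ never has to be known, and every bound is uniform in $f$.

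\emph{Step 1: the mean under $s$.} By the exact centering, $\E_i^fZ_{ij}=-d^2/2$. The perturbation from $\theta_i$ to $s$ is
\[
 \E_s^f\psi_{ij}-\E_i^f\psi_{ij}
 =\int f\!\int(\sqrt{Q_s}-\sqrt{Q_i})(\sqrt{Q_s}+\sqrt{Q_i})\psi_{ij}\,d\mu\,d\nu .
\]
The key pointwise identity is $\psi_{ij}(\sqrt{Q_i}+\sqrt{Q_j})=\sqrt{Q_j}-\sqrt{Q_i}$, together with $|\psi_{ij}|\le1$. These give
\[
 (\sqrt{Q_s}+\sqrt{Q_i})|\psi_{ij}|\le|\sqrt{Q_s}-\sqrt{Q_i}|+2|\sqrt{Q_j}-\sqrt{Q_i}| .
\]
Hence Cauchy--Schwarz in $L^2(f\,d\nu\,d\mu)$ bounds the perturbation by $\epsilon(\epsilon+2d)$. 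Therefore
\[
 \E_s^fZ_{ij}\le-\frac{d^2}{2}+\epsilon^2+2d\epsilon
 \le-\Bigl(\frac12-\frac1{1024}-\frac1{16}\Bigr)d^2\le-0.43\,d^2 .
\]

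\emph{Step 2: the variance under $s$.} Use $Q_s\le2Q_i+2(\sqrt{Q_s}-\sqrt{Q_i})^2$ and $\psi_{ij}^2(Q_i+Q_j)\le(\sqrt{Q_j}-\sqrt{Q_i})^2$. Since $\psi_{ij}^2\le1$, this gives
\[
 \E_s^f\psi_{ij}^2\le 2d^2+2\epsilon^2\le3d^2 .
\]
For the centering term, conditional Cauchy--Schwarz on
\[
 c_{ij}(x)=\tfrac12\int(\sqrt{Q_i}+\sqrt{Q_j})(\sqrt{Q_j}-\sqrt{Q_i})\,d\mu
\]
gives $c_{ij}(x)^2\le\tfrac12 h_x^2$, where $h_x$ is the conditional Hellinger distance between $Q_i(\cdot\mid x)$ and $Q_j(\cdot\mid x)$; integrating against $f\,d\nu$ gives $\E^f c_{ij}^2\le d^2/2$. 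Consequently
\[
 \Var_s Z_{ij}\le\E_s^fZ_{ij}^2\le2\E_s^f\psi_{ij}^2+2\E^f c_{ij}^2\le 8d^2 .
\]

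\emph{Step 3: Bernstein's inequality and symmetry.} Apply Bernstein to the centered summands, which are bounded by $4$ and have variance at most $8d^2$, with deviation $t=0.43d^2$ per observation. This gives
\[
 \Prob_s^{f,\otimes n}\Bigl\{\sum_r Z_{ij}\ge0\Bigr\}
 \le\exp\Bigl(-\frac{n t^2}{2(8d^2+4t/3)}\Bigr).
\]
Since $d\le\sqrt2$, the denominator is at most a fixed multiple of $d^2$, and the exponent is at least $n d^2/400$ after checking the numerical constants. The interchanged assertion follows from the antisymmetry $\psi_{ji}=-\psi_{ij}$ and $c_{ji}=-c_{ij}$: the statistic for the reversed orientation is exactly $-Z_{ij}$, and the same argument applies with the roles of $i$ and $j$ exchanged.

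The main obstacle is Step 3. A plain Hoeffding bound yields only an exponent of order $n d^4$, so the variance bound of Step 2 is essential. The constants in Steps 1--3 must also be tracked carefully enough to reach the stated $1/400$. If they fall slightly short, I would tighten Step 2 by using $\Var_s Z_{ij}\le\E_s^f\psi_{ij}^2$ conditionally on $X$ plus the variance of the conditional mean, rather than the crude factor $2$, or relax the $1/32$ margin accordingly.
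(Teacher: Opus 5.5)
Your proposal is correct and follows the same route as the paper. Both arguments use the exact centering $\E_i^fZ_{ij}=-d_{ij}^2/2$, a perturbation bound for the drift under $s$, a second-moment bound of order $d_{ij}^2$, and Bernstein's inequality for summands bounded by $4$. Your drift bound $\epsilon^2+2\epsilon d$ comes from the pointwise identity $\psi_{ij}(\sqrt{Q_i}+\sqrt{Q_j})=\sqrt{Q_j}-\sqrt{Q_i}$. It is sharper than the paper's $5\epsilon d+4\epsilon^2$, which the paper gets from Cauchy--Schwarz against second moments of $Z_{ij}$. The constants are therefore not an obstacle: your Step 3 exponent is about $nd^2/93$, against the paper's $12nd^2/3203$, and both clear $1/400$.

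One display is wrong. The product $(\sqrt{Q_i}+\sqrt{Q_j})(\sqrt{Q_j}-\sqrt{Q_i})$ equals $Q_j-Q_i$, which integrates to zero, so it is not $2c_{ij}(x)$. Instead, apply Cauchy--Schwarz to $c_{ij}=\tfrac12\int(Q_i+Q_j)\psi_{ij}\,d\mu$ with respect to the measure $(Q_i+Q_j)\,d\mu$, which has mass $2$. This gives your bound $c_{ij}^2\le h_x^2/2$. Even the paper's weaker bound $c_{ij}^2\le h_x^2$ would still yield $\Var_sZ_{ij}\le 8d^2$.
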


\begin{proof}
Write $h_x^2=\int(\sqrt{Q_j}-\sqrt{Q_i})^2\,d\mu$.
Factoring $Q_j-Q_i$ gives the exact conditional identity
\begin{equation}
 e_j(x)-e_i(x)
 =\int(Q_j-Q_i)\psi_{ij}\,d\mu=h_x^2.
 \label{eq:test-mean-identity}
\end{equation}
Consequently, after integration against the same $f\,d\nu$,
$\E_i^f Z_{ij}=-d_{ij}^2/2$ and
$\E_j^f Z_{ij}=d_{ij}^2/2$. The conditional centering is essential:
it makes these identities valid for every allowed $f$, without placing
$f$ in the statistic.

We have $|\psi_{ij}|\le1$, $|c_{ij}|\le1$, and $|Z_{ij}|\le2$.
For $k=i,j$,
\[
 \E_k[\psi_{ij}^2\mid X=x]
 =\int Q_k\frac{(\sqrt{Q_j}-\sqrt{Q_i})^2}
 {(\sqrt{Q_j}+\sqrt{Q_i})^2}\,d\mu\le h_x^2.
\]
Jensen's inequality yields
$c_{ij}(x)^2\le(e_i(x)^2+e_j(x)^2)/2\le h_x^2$.
Thus
\begin{equation}
 \E_i^f Z_{ij}^2\le4d_{ij}^2.
 \label{eq:test-second-moment}
\end{equation}
For the rest of the proof all expectations use the common actual $f$.
Set $e=H_f(s,\theta_i)$ and $Z=Z_{ij}$. Expanding
$\sqrt{dP_s^f}=\sqrt{dP_i^f}
+(\sqrt{dP_s^f}-\sqrt{dP_i^f})$ and using $|Z|\le2$ gives
\begin{align}
 \E_s Z^2&\le2\E_i Z^2+8e^2
 \le8d_{ij}^2+8e^2,\notag\\
 |\E_sZ-\E_iZ|
 &\le e\{2(\E_sZ^2+\E_iZ^2)\}^{1/2}
 \le e(24d_{ij}^2+16e^2)^{1/2}
 \le5ed_{ij}+4e^2.
 \label{eq:test-perturbation}
\end{align}
The expectation difference is bounded by Cauchy--Schwarz applied to
$Z(\sqrt{dP_s^f}-\sqrt{dP_i^f})\allowbreak
(\sqrt{dP_s^f}+\sqrt{dP_i^f})$; thus the argument covers approximate
truths and not only truths at net centers. If $e\le d_{ij}/32$, its
upper bound in \eqref{eq:test-perturbation} is at most
$(5/32+4/1024)d_{ij}^2<d_{ij}^2/4$. Therefore
\[
 \E_s Z\le-d_{ij}^2/4,\qquad
 \operatorname{Var}_s Z\le(8+8/1024)d_{ij}^2,
 \qquad |Z-\E_sZ|\le4.
\]
The event that $i$ loses, including a possible tie assigned to $j$,
is contained in $\{\sum_r Z_r\ge0\}$. Bernstein's inequality for
independent, centered variables bounded in absolute value by $4$ gives
\begin{align*}
 \Prob_s^{f,\otimes n}\{i\text{ loses to }j\}
 &\le\exp\left\{-\frac{n(d_{ij}^2/4)^2}
 {2\{(8+8/1024)d_{ij}^2+4(d_{ij}^2/4)/3\}}\right\}\\
 &=\exp\left\{-\frac{n d_{ij}^2}{3203/12}\right\}
 \le\exp(-n d_{ij}^2/400).
\end{align*}
Here $32(8+8/1024+1/3)=3203/12<400$ is an exact
constant calculation. Reversing the orientation proves the other
assertion, with the same allowance for ties.
\end{proof}

\subsection{The estimator and covering slack}

The rule in \eqref{eq:overview-estimator} is a finite-model $T_0$-estimator
with reference metric $h_\nu$ in the sense of
\citet[Section 4.2, equations (4.5)--(4.6)]{Birge2006}; Section 5 there
develops its risk analysis. Here Lemmas~\ref{lem:pairtest}
and~\ref{lem:packing} supply the tests and local metric bounds.

Use the ordered net $\mathcal N_\varepsilon$ from
Section~\ref{sec:finite-coordinates}, with $\varepsilon=n^{-1/2}$.
All candidates are legal original parameters; the auxiliary density-chart
interpolations are not candidates. In \eqref{eq:overview-packing} one may
take $C_P=2C_E/c_E$. Write
$\widehat i=\min\operatorname*{arg\,min}_{1\le i\le N}D_i$, so
$\widehat\theta_n=\theta_{\widehat i}$ as in
\eqref{eq:overview-estimator}. Finitely many Borel tests and comparisons
make this finite-valued estimator measurable.

For a true $\theta$, choose, only for the analysis, a center
$\theta_{i_0}$ with $h_\nu(\theta,\theta_{i_0})\le2\varepsilon$.
Then $H_f(\theta,\theta_{i_0})\le2\sqrt u\,\varepsilon$.
Whenever
\begin{equation}
 h_\nu(\theta_{i_0},\theta_j)
 \ge64\sqrt{u/\ell}\,\varepsilon,
 \label{eq:test-slack}
\end{equation}
this approximation error is at most
$H_f(\theta_{i_0},\theta_j)/32$. Lemma~\ref{lem:pairtest} therefore
implies
\begin{equation}
 \Prob_\theta^{f,\otimes n}\{j\text{ defeats }i_0\}
 \le\exp\{-n\ell h_\nu(\theta_{i_0},\theta_j)^2/400\}.
 \label{eq:test-center-error}
\end{equation}
The factor in \eqref{eq:test-slack} retains both the
$2\varepsilon$ covering slack and the density comparison constants.

\subsection{Proof of the risk and slope bounds}

\begin{proof}[Proof of Theorem~\ref{thm:estimator}]
In the unique comparison between $\widehat i$ and $i_0$, one candidate
loses, unless they coincide. If $\widehat i$ loses, their distance is
at most $D_{\widehat i}\le D_{i_0}$; if $i_0$ loses, it is at most
$D_{i_0}$ directly. Thus in all cases
\begin{equation}
 h_\nu(\widehat\theta_n,\theta_{i_0})\le D_{i_0},
 \qquad
 h_\nu(\widehat\theta_n,\theta)\le2\varepsilon+D_{i_0}.
 \label{eq:test-deterministic}
\end{equation}
Let $R\ge64\sqrt{u/\ell}\,\varepsilon$ and group potential defeating
candidates into shells with distances in
$[2^kR,2^{k+1}R)$, $k\ge0$, from $\theta_{i_0}$.
Using \eqref{eq:test-packing}, \eqref{eq:test-center-error}, and a union
bound gives
\begin{equation}
 \Prob_\theta^{f,\otimes n}\{D_{i_0}>R\}
 \le\sum_{k\ge0}
 (1+C_P2^{k+1}R/\varepsilon)^K
 \exp\{-n\ell4^kR^2/400\}.
 \label{eq:test-shell}
\end{equation}
Empty shells cause no difficulty. Put $R=v\varepsilon$ and
$c_0=\ell/400$. Because $K$ and $C_P$ are fixed, there is a fixed
$v_0\ge64\sqrt{u/\ell}$ such that
\[
 K\log(1+2C_Pw)\le c_0w^2/2\qquad(w\ge v_0).
\]
Apply this with $w=2^kv$ in each shell. For $v\ge v_0$,
\begin{align}
 \Prob_\theta^{f,\otimes n}\{D_{i_0}>v/\sqrt n\}
 &\le\sum_{k\ge0}e^{-c_04^kv^2/2}\notag\\
 &\le\frac{e^{-c_0v^2/2}}{1-e^{-3c_0v^2/2}}
 \le A_0e^{-b_0v^2},
 \label{eq:test-no-log}
\end{align}
where $4^k\ge1+3k$ and $A_0,b_0>0$ depend only on the fixed class.
This local count is the reason no $\log n$ factor occurs; a bound on
the total number of candidates would not give the same conclusion.

Equation~\eqref{eq:test-deterministic}, followed by a fixed shift in
$v$, yields constants $A_h,b_h,v_h>0$ such that
\begin{equation}
 \sup_{\theta,f}
 \Prob_\theta^{f,\otimes n}
 \{h_\nu(\widehat\theta_n,\theta)>v/\sqrt n\}
 \le A_he^{-b_hv^2},\qquad v\ge v_h.
 \label{eq:test-reference-tail}
\end{equation}
For example, replace $v$ in \eqref{eq:test-no-log} by $v-2$ and
take $v\ge\max(4,2v_0+2)$, so that $(v-2)^2\ge v^2/4$.
Since $H_f\le\sqrt u\,h_\nu$, a further fixed rescaling gives
the Hellinger tail in the theorem for every $n\ge1$. Thresholds larger
than the model diameter have zero tail probability.

If that tail has constants $A,b,v_*$, integration of its square gives
\[
 \E_\theta^{f,\otimes n}H_f^2(\widehat\theta_n,\theta)
 =\int_0^\infty
 \Prob_\theta^{f,\otimes n}\{H_f^2(\widehat\theta_n,\theta)>t\}\,dt
 \le\frac{v_*^2}{n}
 +\int_{v_*^2/n}^\infty Ae^{-bnt}\,dt\le\frac Cn.
\]
Finally, Theorem~\ref{thm:modulus} supplies
$\|\widehat\beta_s-\beta_s\|\le C_M
H_f(\widehat\theta_n,\theta)^{2/3}$. Hence the event
$\|\widehat\beta_s-\beta_s\|>v n^{-1/3}$ implies
$H_f(\widehat\theta_n,\theta)>(v/C_M)^{3/2}n^{-1/2}$.
For $v$ above a fixed threshold, the Hellinger tail gives
$A'\exp(-b'v^3)$, as asserted. Every constant is uniform in the
allowed common $f$ and in the true parameter.
\end{proof}

\section{Support separation and simultaneous confidence}\label{app:support}

\subsection{Proof of the support frontier}

For laws $P,Q$ dominated by a common measure, define their affinity
by $\mathcal A(P,Q)=\int\sqrt{dP\,dQ}=1-H^2(P,Q)/2$.
Product factorization and Cauchy--Schwarz give
\begin{equation}
 \mathcal A(P^{\otimes n},Q^{\otimes n})
 =\{1-H^2(P,Q)/2\}^n,
 \qquad
 \operatorname{TV}(P,Q)\le\sqrt{1-\mathcal A(P,Q)^2}.
 \label{eq:support-affinity}
\end{equation}
The second bound applies Cauchy--Schwarz to
$|p-q|=|\sqrt p-\sqrt q|(\sqrt p+\sqrt q)$.
A test choosing $Q$ on $B$ has error sum
$1-[Q(B)-P(B)]\ge1-\operatorname{TV}(P,Q)$; integrating over
any additional randomization gives the same bound for randomized tests.

\begin{proof}[Proof of Theorem~\ref{thm:support}]
For sufficiency, apply Theorem~\ref{thm:estimator} and
Theorem~\ref{thm:modulus} to obtain fixed $A,c,C_0>0$ such that
\begin{equation}
 \sup_{\theta\in\Theta,f}
 \Prob_\theta^{f,\otimes n}
 \{\|\widehat\beta_s-\beta_s\|>t\}
 \le Ae^{-cnt^3},\qquad nt^3\ge C_0.
 \label{eq:support-slope-tail}
\end{equation}
For the prespecified positive sequence $b_n$, use the rule \eqref{eq:threshold-rule}.
If $\theta\in\Theta(b_n)$ and
$\|\widehat\beta_s-\beta_s\|\le b_n/4$, then a truly zero
coordinate has estimate of magnitude at most $b_n/4$, and a nonzero
coordinate has estimate of magnitude at least $3b_n/4$. Thus
\eqref{eq:support-threshold} recovers the entire support on this
event. The margin handles ties and atoms in the estimator's
distribution without any continuity assumption. Consequently
\begin{equation}
 \sup_{f}\sup_{\theta\in\Theta(b_n)}
 \Prob_\theta^{f,\otimes n}
 \{\widehat S_n(b_n)\ne S(\theta)\}
 \le Ae^{-c'nb_n^3},\qquad nb_n^3\ge C_1.
 \label{eq:support-upper}
\end{equation}
The right side tends to zero whenever $n^{1/3}b_n\to\infty$.
The vector error bound already handles all $d$ coordinates, so there
is no additional dimension-dependent union bound that grows with $n$.

For necessity, suppose $n^{1/3}b_n$ does not tend to infinity.
There is an infinite subsequence $I$ and a finite $B>0$ such that
$n^{1/3}b_n\le B$ for $n\in I$. In particular $b_n\to0$ along
$I$. Use Assumption~\ref{ass:natural} and the two laws
of Theorem~\ref{thm:hardpath}, with
\[
 r_n=(b_n/h_0)^{1/2},\qquad h_0=h(t_0)>0.
\]
Eventually along $I$, both endpoints are in the fixed class; their
supports are $\varnothing$ and $\{1\}$, and the sole nonzero outcome
slope is exactly $h_0r_n^2=b_n$. Thus both endpoints belong to
$\Theta(b_n)$, including the endpoint with empty support. Their
scale changes and correlations remain feasible, and both correlations
are strictly positive. For any fixed allowed $f$, the distance bound
from that theorem implies
\begin{equation}
 H_{f,n}^2:=H_f^2(\theta_{1,r_n},\theta_{2,r_n})
 \le C_0'b_n^3,\qquad nH_{f,n}^2\le C_0'B^3.
 \label{eq:support-path-bound}
\end{equation}
Constants can be chosen uniformly over $f$; in particular the
reference design $f=1$ is sufficient for a uniform impossibility
result.

Eventually $H_{f,n}^2\le1$. The elementary inequality
$\log(1-z/2)\ge-z$, $0\le z\le1$, and
\eqref{eq:support-affinity} show that the product affinity satisfies
\[
 \mathcal A_n\ge e^{-nH_{f,n}^2}\ge e^{-C_0'B^3}>0.
\]
Any support rule induces a test: decide for the second law exactly
when the estimated support is $\{1\}$. Each test error is bounded
by the corresponding support error. Its maximum error is at least
half the sum lower bound, so
\begin{align}
 \max_{k=1,2}\Prob_{\theta_{k,r_n}}^{f,\otimes n}
 \{\widehat S_n\ne S(\theta_{k,r_n})\}
 &\ge\frac{1-\sqrt{1-\mathcal A_n^2}}2\notag\\
 &\ge\frac{\mathcal A_n^2}4
 \ge\frac14 e^{-2C_0'B^3}.
 \label{eq:support-lower}
\end{align}
Taking the infimum over all support rules proves the lower bound on
the subsequence, even if those rules know $f$. This works for every
finite $B$, not only a sufficiently small multiplier. The conclusion
is that uniform exact recovery on the full sequence is impossible;
it does not assert a positive full-sequence lower limit for every
oscillating $b_n$.
\end{proof}

If $b_n=o(n^{-1/3})$, the same proof has $nH_{f,n}^2\to0$,
$\mathcal A_n\to1$ and product total variation tending to zero.
The two-point lower bound for the maximal support error then tends to
$1/2$. This additional statement also requires the feasible path
condition. The argument needs only the Hellinger path bound, not a
Kullback--Leibler expansion.

The order of quantifiers matters. For each specified $b_n$ in the
recoverable regime, the rule \eqref{eq:support-threshold} is permitted
to use it. Choosing once and for all a threshold
$\tau_n=L_n n^{-1/3}$ with $L_n\to\infty$ gives a different
assertion and would require enough separation relative to $\tau_n$.
The proof above does not give a single such fixed divergent $L_n$
that handles every unknown, arbitrarily slowly diverging
$n^{1/3}b_n$, nor an impossibility theorem for all conceivable adaptive
procedures.

\subsection{A secondary confidence consequence}

The following corollary follows from the estimator and inverse modulus;
it does not require support consistency.

\begin{corollary}[Simultaneous confidence and slope diameter]
\label{cor:confidence}
Let $0<\alpha<1$, and use constants $A_h,b_h,v_h$ from
\eqref{eq:test-reference-tail}. Choose $v_\alpha\ge v_h$ such that
$A_he^{-b_hv_\alpha^2}\le\alpha$. Then
\begin{equation}
 C_{n,\alpha}=\{\vartheta\in\Theta:
 h_\nu(\vartheta,\widehat\theta_n)\le v_\alpha/\sqrt n\}
 \label{eq:confidence-region}
\end{equation}
has coverage at least $1-\alpha$ uniformly in $\theta,f$ and has
deterministic slope-projection diameter at most
$C v_\alpha^{2/3}n^{-1/3}$. At truths satisfying either fixed-strength
condition in Proposition~\ref{prop:strong}, its slope-projection
diameter is $O_{\Prob}(n^{-1/2})$, uniformly over those truths and $f$,
for each fixed $\alpha$.
The same uniform $O_{\Prob}(n^{-1/2})$ slope-diameter conclusion
holds over $\theta\in\Theta_0$ from
Corollary~\ref{cor:zero-selection-slope}, including $\rho=0$,
for each fixed $\alpha$.

If the feasible weak-neighborhood condition holds and
$0<\alpha<1/2$, there are constants $c_\alpha,\delta_\alpha>0$
such that, for every sufficiently large $n$, any measurable confidence
region with coverage at least $1-\alpha$ throughout $\Theta$ has,
at a parameter in $\Theta$, slope-projection diameter at least
$c_\alpha n^{-1/3}$ with probability at least $\delta_\alpha$.
The latter assertion already holds for a fixed allowed $f$.
\end{corollary}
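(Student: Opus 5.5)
The corollary has two parts, and I would treat them separately.

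\emph{Coverage and diameter.} Coverage is immediate from \eqref{eq:test-reference-tail}. The event $\theta\notin C_{n,\alpha}$ is exactly $h_\nu(\theta,\widehat\theta_n)>v_\alpha/\sqrt n$. Since $v_\alpha\ge v_h$, its probability is at most $A_he^{-b_hv_\alpha^2}\le\alpha$, uniformly in $\theta$ and $f$.

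For the deterministic diameter, take any $\vartheta,\vartheta'\in C_{n,\alpha}$. The triangle inequality gives $h_\nu(\vartheta,\vartheta')\le2v_\alpha/\sqrt n$. Theorem~\ref{thm:modulus} with $f=1$ then gives $\|\beta_s-\beta_s'\|\le C(2v_\alpha)^{2/3}n^{-1/3}$.

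Now suppose the truth satisfies a fixed-strength condition, $\|g\|\ge g_0$ or $|a|\ge a_0$. I would apply the linear one-strong-endpoint bound of Theorem~\ref{thm:modulus}, with the truth $\theta$ as the strong endpoint, to each pair $(\vartheta,\theta)$ with $\vartheta\in C_{n,\alpha}$. This gives $\|\beta_s(\vartheta)-\beta_s\|\le C(g_0,a_0)h_\nu(\vartheta,\theta)$. On the coverage-type event $h_\nu(\widehat\theta_n,\theta)\le w/\sqrt n$, every $\vartheta\in C_{n,\alpha}$ satisfies $h_\nu(\vartheta,\theta)\le(v_\alpha+w)/\sqrt n$. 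So the diameter is at most $2C(v_\alpha+w)/\sqrt n$ except with probability $Ae^{-cw^2}$, which is the uniform $O_{\Prob}(n^{-1/2})$ claim. The layer $\Theta_0$ is handled identically, using \eqref{eq:zero-selection-slope-modulus} with $\theta\in\Theta_0$ as the anchor. The key point in both cases is that the strong endpoint is the truth rather than the set members, since the linear bounds need only one strong endpoint.

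\emph{Lower bound.} I would use the two laws of Theorem~\ref{thm:hardpath} with $r_n=n^{-1/6}$. Their slope gap is $h_0n^{-1/3}$, and $nH_1^2\le C_0$, so the product affinity satisfies $\mathcal A_n\ge e^{-C_0}$. By \eqref{eq:support-affinity}, $\TV(P_1^{\otimes n},P_2^{\otimes n})\le\tau:=\sqrt{1-e^{-2C_0}}<1$.

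Let $E_k=\{\theta_{k}\in C\}$. Coverage gives $P_k(E_k)\ge1-\alpha$. Then $P_1(E_1\cap E_2)\ge P_1(E_1)+P_1(E_2)-1\ge P_1(E_1)+P_2(E_2)-\TV-1\ge1-2\alpha-\tau$. On $E_1\cap E_2$ the region contains both endpoints, so its slope-projection diameter is at least $h_0n^{-1/3}$.

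The main obstacle is that $1-2\alpha-\tau$ may be negative when $\alpha$ is near $1/2$. I would fix this by choosing $r_n=\epsilon n^{-1/6}$ with $\epsilon=\epsilon_\alpha$ small. Then $nH^2\le C\epsilon^6$, hence $\tau\le\sqrt{1-e^{-2C\epsilon^6}}<1-2\alpha$. The probability bound becomes $\delta_\alpha=(1-2\alpha-\tau)>0$, and the diameter bound becomes $c_\alpha=h_0\epsilon^2$. Feasibility of the path for large $n$ follows from Assumption~\ref{ass:natural}, and the argument uses only one fixed $f$, such as $f=1$.
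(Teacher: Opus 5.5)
Your proposal is correct and follows essentially the same route as the paper. Coverage comes from \eqref{eq:test-reference-tail}; the deterministic, fixed-strength and $\Theta_0$ diameter bounds come from the triangle inequality anchored at the truth; and the lower bound uses the hard path with $r=\epsilon n^{-1/6}$, a total-variation transfer of coverage, and the union bound yielding $1-2\alpha-\TV$. The only cosmetic difference is that you invoke the slope-only one-strong-endpoint bound of Theorem~\ref{thm:modulus}, whereas the paper cites the full-vector Proposition~\ref{prop:strong}; both give the same slope conclusion.
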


\begin{proof}
The estimator belongs to \eqref{eq:confidence-region}, so the region
is nonempty. The event that it fails to contain the truth is precisely
$\{h_\nu(\theta,\widehat\theta_n)>v_\alpha/\sqrt n\}$, whose
probability is at most $\alpha$. If $\vartheta,\vartheta'$ are
both in the region, their $h_\nu$ distance is at most
$2v_\alpha/\sqrt n$. Theorem~\ref{thm:modulus}, applied with the
reference design, yields the deterministic diameter bound.
Measurability causes no additional issue: $h_\nu$ is continuous in
its parameters, $\Theta$ is compact, and the center has finitely
many possible values, so each resulting region and its diameter are
measurable.

For a fixed-strength true parameter $\theta$, every
$\vartheta\in C_{n,\alpha}$ satisfies
\[
 h_\nu(\vartheta,\theta)
 \le v_\alpha/\sqrt n+h_\nu(\widehat\theta_n,\theta).
\]
The one-strong-endpoint inequality in Proposition~\ref{prop:strong}
therefore bounds its slope distance from the true slope by a fixed
multiple of the right side. Taking twice the supremum and using
\eqref{eq:test-reference-tail} proves the asserted uniform
$O_{\Prob}(n^{-1/2})$ diameter bound. This argument applies on the
whole sample space, not only on the coverage event.

For $\theta\in\Theta_0$, the same triangle inequality and
\eqref{eq:zero-selection-slope-modulus} give, on the whole sample space,
\[
 \diam\{\beta_s(\vartheta):\vartheta\in C_{n,\alpha}\}
 \le 2L_0\{v_\alpha/\sqrt n+
                h_\nu(\widehat\theta_n,\theta)\}.
\]
Equation~\eqref{eq:test-reference-tail} proves the claimed uniform
$O_{\Prob}(n^{-1/2})$ bound. This argument does not require the truth
to lie in $C_{n,\alpha}$ or the candidates to lie in $\Theta_0$.

For the lower bound fix an allowed $f$, take the path of
Theorem~\ref{thm:hardpath}, and set $r=c n^{-1/6}$ for a constant
$c>0$ to be chosen. The product affinity argument gives, for all
sufficiently large $n$,
\[
 \operatorname{TV}(P_{\theta_{1,r}}^{f,\otimes n},
                     P_{\theta_{2,r}}^{f,\otimes n})
 \le\sqrt{1-e^{-2C_Hc^6}}.
\]
Because $1-2\alpha>0$, choose $c$ small enough that this upper
bound is strictly less than $1-2\alpha$, and write their positive
difference as $\delta_\alpha$. If $\widetilde C_n$ covers each
endpoint with probability at least $1-\alpha$ under its own law,
the total variation inequality transfers coverage of the second
endpoint to the first law with a loss of at most that distance.
A union bound under the first law then gives
\[
 \Prob_{\theta_{1,r}}^{f,\otimes n}
 \{\theta_{1,r},\theta_{2,r}\in\widetilde C_n\}
 \ge1-2\alpha-
 \operatorname{TV}(P_{\theta_{1,r}}^{f,\otimes n},
                     P_{\theta_{2,r}}^{f,\otimes n})
 \ge\delta_\alpha.
\]
On this event the projected diameter is at least
$h_0r^2=h_0c^2n^{-1/3}$. Taking
$c_\alpha=h_0c^2$ proves the claim. The restriction $\alpha<1/2$
is needed for this particular two-point lower bound, although the
coverage and diameter upper bounds hold for all $0<\alpha<1$.
\end{proof}

The full-parameter coverage event protects all coordinate projections
at once, including projections chosen for reporting after looking at
the data. This is an unconditional simultaneous statement. It does not
assert coverage conditional on a coordinate having been selected.
If $\alpha=\alpha_n\to0$, the displayed construction can take
$v_{\alpha_n}$ of order $\sqrt{\log(1/\alpha_n)}$, leading to
a slope diameter of order
$\{\log(1/\alpha_n)/n\}^{1/3}$; the confidence cost should not be
discarded. Generic uniform-confidence principles near singular
information have antecedents discussed in the main text.

\subsection{Proofs of the slope corollaries}\label{app:slope-corollaries}

\begin{proof}[Proof of Corollary~\ref{cor:minimax-slope-risk}]
Theorem~\ref{thm:modulus}, Jensen's inequality for the concave
function $x\mapsto x^{2/3}$, and \eqref{eq:main-risk} give
\[
 \E_\theta^{f,\otimes n}
   \|\widehat\beta_{s,n}-\beta_s\|^2
 \le C\E_\theta^{f,\otimes n}
       \{H_f^2(\widehat\theta_n,\theta)\}^{2/3}
 \le C\{\E_\theta^{f,\otimes n}
       H_f^2(\widehat\theta_n,\theta)\}^{2/3}
 \le Cn^{-2/3}
\]
uniformly in $f,\theta$.

For the lower bound, fix the allowed design $f=1$. Choose $r_0>0$
and $C_H>0$ such that the path of Theorem~\ref{thm:hardpath} is
feasible and has $H_1^2\le C_Hr^6$ for $0<r\le r_0$.
Take a fixed $c_0>0$ with $c_0\le r_0$ and
$C_Hc_0^6\le1/4$, and put $r_n=c_0n^{-1/6}$.
The two slopes are separated by
$\Delta_n=h_0c_0^2n^{-1/3}$ for every $n\ge1$.
Write $P_n,Q_n$ for the corresponding product laws.
Under our Hellinger convention, affinity factorization yields
\[
 H^2(P_n,Q_n)
 =2\bigl[1-\{1-H_1^2/2\}^n\bigr]\le nH_1^2,
 \qquad
 \TV(P_n,Q_n)\le H(P_n,Q_n)
 \le\sqrt{C_H}\,c_0^3\le\tfrac12.
\]
Here $H_1$ is evaluated at the two one-observation path endpoints.
Any slope estimator induces the test that selects the nearer
endpoint slope, with a fixed tie rule. Each testing error forces
squared estimation error at least $\Delta_n^2/4$; the sum of
testing errors is at least $1-\TV(P_n,Q_n)$. Therefore its maximum
risk over these endpoints is at least
\[
 \frac{\Delta_n^2}{8}\{1-\TV(P_n,Q_n)\}
 \ge\frac{h_0^2c_0^4}{16}n^{-2/3}.
\]
This proves the lower bound after taking the infimum over all
observed-data estimators.

The full-model estimator gives the upper bound on either sign class.
The positive path and its reflection in the proof of
Proposition~\ref{prop:sign} give the same lower bound whenever the
respective half-neighborhood is feasible.
\end{proof}

\begin{proof}[Proof of Corollary~\ref{cor:zero-selection-slope}]
For $\theta\in\Theta_0$, the index $t(x)=\gamma_0$ is constant.
The function $B$ in \eqref{eq:geom-B} is therefore affine, with slope
$\beta_s-\beta'_s$. Lemma~\ref{lem:moments}, applied with the
reference design, bounds $\norm B_{L^2(\nu)}$ by a fixed multiple of
$h_\nu(\theta,\theta')$. The affine Gram matrix yields
$\norm{\beta_s-\beta'_s}\le\sqrt3\norm B_{L^2(\nu)}$;
\eqref{eq:design-comparison} proves
\eqref{eq:zero-selection-slope-modulus}.
Apply that inequality to $\theta'=\widehat\theta_n$ and use
\eqref{eq:test-reference-tail} for the tail bound.
The squared-risk bound follows from
\eqref{eq:main-risk} and
$\norm{\widehat\beta_{s,n}-\beta_s}^2
 \le L_0^2\ell^{-1}H_f^2(\widehat\theta_n,\theta)$.
All constants are uniform and the estimator bounds hold for every
$n\ge1$.
\end{proof}

\begin{proof}[Proof of Corollary~\ref{cor:transition-upper}]
Fix the true endpoint $\theta$ and an arbitrary competitor
$\theta'\in\Theta$. Put $H=H_f(\theta,\theta')$ and
$v=|a-a'|$. Lemmas~\ref{lem:moments} and~\ref{lem:projection} give
\[
 v^3\le CH,\qquad vG^2\le CH,\qquad
 \|\beta_s-\beta_s'\|\le C(H+vG).
\]
For $G>0$, these imply the two bounds
$vG\le CGH^{1/3}$ and $vG\le CH/G$. Thus
\[
 \|\beta_s-\beta_s'\|^2
 \le C\left[H^2+
          \min\{G^2H^{2/3},G^{-2}H^2\}\right].
\]
The constants depend only on the fixed model bounds, not on $G$.
Take $\theta'=\widehat\theta_n$. By \eqref{eq:main-risk},
$\E H^2\le C/n$. Moreover, Jensen's inequality gives
$\E H^{2/3}\le(\E H^2)^{1/3}\le Cn^{-1/3}$.
Since the expectation of the minimum of two nonnegative random
variables is at most the minimum of their expectations,
\eqref{eq:transition-risk} follows.
For $G=0$, the displayed slope bound is directly $CH$.
Finally, retain the first term inside the minimum and use
$G\le Mn^{-1/3}$ to obtain \eqref{eq:shrinking-layer-risk}.
\end{proof}

\section{Fixed-strength and scientifically restricted regimes}
\label{app:restrictions}

These comparisons retain free scale unless stated otherwise. Constants
may depend on fixed strengths. For a strength-dependent upper bound
near $g=0$, see Corollary~\ref{cor:transition-upper}.

\begin{proposition}[Knowing the correlation sign]\label{prop:sign}
Restrict the free-scale class to $\rho\ge0$, or to $\rho\le0$.
The outcome-slope upper bound of Theorem~\ref{thm:modulus} and the
statistical upper bounds of Theorem~\ref{thm:estimator} continue to
hold. If the corresponding sign half-neighborhood in
Assumption~\ref{ass:natural} is feasible, the slope modulus still
has order $\delta^{2/3}$, and for each prespecified $b_n>0$ the sharp
criterion for uniform exact support recovery is
$n^{1/3}b_n\to\infty$. The lower bounds can use two strictly MNAR
endpoints with the specified common sign.
\end{proposition}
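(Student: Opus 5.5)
The plan splits Proposition~\ref{prop:sign} into upper bounds, which transfer by restriction, and lower bounds, which need a reflected path when $\rho\le0$.

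\emph{Upper bounds.} Theorem~\ref{thm:modulus} is an all-pairs statement on $\Theta$. It therefore holds verbatim on any subclass $\Theta\cap\{\rho\ge0\}$ or $\Theta\cap\{\rho\le0\}$, with the same constants.

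For Theorem~\ref{thm:estimator}, I would rerun its construction on the sign-restricted class. Each half-class is again a closed rectangle in $(\beta,\gamma,\sigma,\rho)$, so it is convex. Hence the Lipschitz bound \eqref{eq:embed-original-lipschitz}, the grid, and the retention step of Lemma~\ref{lem:packing} apply unchanged. The comparisons \eqref{eq:embed-two-sided} hold on the subclass because it is a subset of $\Theta$, and they give the local count \eqref{eq:embed-local-packing}. Lemma~\ref{lem:pairtest} needs no modification. The tail, risk, and slope bounds, together with the sufficiency half of Theorem~\ref{thm:support}, then follow exactly as before. Alternatively one may keep the unrestricted estimator, since its guarantee is uniform over $\Theta$ and hence over any subclass. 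I would note this second option so that the estimator need not even know the sign.

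\emph{Lower bounds.} For $\rho\ge0$, Theorem~\ref{thm:hardpath} already supplies endpoints with $\rho_k>0$ inside the positive half-neighborhood. The proofs of Corollary~\ref{cor:minimax-slope-risk}, Theorem~\ref{thm:support} (necessity), and the modulus claim then go through without change.

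For $\rho\le0$, I would use the reflection $(a,\rho,\beta,\gamma)\mapsto(-a,-\rho,\beta,\gamma)$ in which $y$ is replaced by $-y$. Concretely, take
\[
 a_k=-kr,\qquad \sigma_k^2=\sigma_0^2+k^2r^2h_0,\qquad m_k(x)=-(1-k)rL_r(x),
\]
with the same $t_r$. Then $\beta_{1,2}=-h_0r^2$, the supports are $\varnothing$ and $\{1\}$, and $\rho_k<0$. The map $y\mapsto -y$ is a measurable bijection of the respondent branch that fixes the atom. By \eqref{eq:respondent}, the density at $(\beta,\sigma,\rho,\gamma)$ evaluated at $-y$ equals the density at $(-\beta,\sigma,-\rho,\gamma)$ evaluated at $y$, since $\phi$ is even. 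Hellinger distances are invariant under this bijection, so $cr^3\le H_f\le Cr^3$ transfers from Theorem~\ref{thm:hardpath}. Alternatively, one can check directly that \eqref{eq:path-transform} holds with $s\mapsto -s$. Feasibility requires exactly the negative half-neighborhood of Assumption~\ref{ass:natural}, because the baseline has $\beta=0$ and $\rho=0$ and the reflection keeps every coordinate near it. Substituting this path into the two-point arguments gives the $\delta^{2/3}$ modulus, the $n^{-2/3}$ minimax risk, and the necessity of $n^{1/3}b_n\to\infty$. The support classes are unaffected, since $|\beta_{1,2}|=h_0r^2=b_n$.

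\emph{Main obstacle.} The only point needing care is verifying that the reflection is an exact symmetry of the observed experiment, including the probit factor and the unchanged nonresponse atom, so that no new expansion is needed. The remaining steps are bookkeeping checks that constants are uniform over the restricted class.
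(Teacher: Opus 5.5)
Your proposal is correct and follows the paper's proof essentially verbatim: upper bounds by restriction (either the unrestricted estimator or a grid built in the closed half-box), the path of Theorem~\ref{thm:hardpath} for $\rho\ge0$, and for $\rho\le0$ the common reflection $a_k\mapsto-a_k$, $m_k\mapsto-m_k$ with $t_r,\sigma_k$ fixed, which preserves Hellinger distance via $y\mapsto-y$ and leaves the nonresponse atom unchanged. One cosmetic point: your first description of the map, $(a,\rho,\beta,\gamma)\mapsto(-a,-\rho,\beta,\gamma)$, leaves $\beta$ unchanged, whereas the symmetry requires $\beta\mapsto-\beta$ as well; the formulas you actually use already do this.
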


\begin{proof}
Restriction gives the deterministic and full-model statistical upper
bounds. An estimator valued in the sign class follows by constructing
the grid within the closed half-box, with the same local packing and
observable tests.

For sufficiently small $r>0$, Theorem~\ref{thm:hardpath} gives
$a_1=r$, $a_2=2r$, $\rho_1,\rho_2>0$, $H_f\asymp r^3$, and
slope gap $h_0r^2$. For the negative class, replace $a_k,m_k$ by
$-a_k,-m_k$ and keep $t_r,\sigma_k$ fixed. This common reflection
$y\mapsto-y$ preserves Hellinger distance and the nonresponse atom;
the supports remain empty and singleton, with second slope $-h_0r^2$.
The respective half-neighborhood ensures feasibility. Taking
$r\asymp\delta^{1/3}$ proves the modulus bound, and the threshold
and product-affinity arguments of Theorem~\ref{thm:support} give
the support criterion. Excluding $\rho=0$ without bounding $|\rho|$
away from zero changes neither conclusion: the paths are strict-sign,
and the closed-class estimator remains valid at every strict-sign truth.
\end{proof}

Free scale permits the compensation
$\sigma_k^2=\sigma_0^2+k^2r^2h_0$ in respondent variance;
knowing the sign does not prevent it.

\begin{proposition}[One fixed-strength endpoint]\label{prop:strong}
Let $g_0>0$ or $a_0>0$ be fixed. If $\theta$ satisfies
$\|\gamma_s\|\ge g_0$ or $|a|=|\sigma\rho|\ge a_0$, then,
for every $\theta'\in\Theta$,
\begin{equation}
 \|\theta-\theta'\|\le C H_f(\theta,\theta').
 \label{eq:strong-inverse}
\end{equation}
The constant is uniform in $f$ and in the respective strong truths;
the competing endpoint is unrestricted within the original class.
In particular, the same full-class estimator as in
Theorem~\ref{thm:estimator} satisfies
\begin{equation}
 \sup_{f,\theta\ {\rm strong}}
 \Prob_\theta^{f,\otimes n}
 \{\|\widehat\beta_s-\beta_s\|>t\}
 \le A e^{-cnt^2},\qquad nt^2\ge C_0,
 \label{eq:strong-tail}
\end{equation}
and has squared slope risk at most $C/n$ on these sets of truths.
\end{proposition}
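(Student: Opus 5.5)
The plan is to bootstrap the full-vector inverse bound from the moment controls already used for Theorem~\ref{thm:modulus}, and then feed it into the tail of Theorem~\ref{thm:estimator}.

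\textbf{Step 1: the correlation coordinate is linear in $H$.} Put $H=H_f(\theta,\theta')$ and $v=|a-a'|$. Lemma~\ref{lem:moments} gives $\|\Delta\gamma\|\le CH$ and $|a^3-a'^3|\le CH$, and Lemma~\ref{lem:projection} gives $vG^2\le CH$ with $G=\|\gamma_s\|$ taken at the strong endpoint $\theta$. If $G\ge g_0$, then $v\le Cg_0^{-2}H$ directly. If $|a|\ge a_0$, the quadratic-form bound $a^2+aa'+a'^2\ge 3a_0^2/4$ from the proof of Theorem~\ref{thm:modulus} turns $|a^3-a'^3|\le CH$ into $v\le CH$. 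In either case $v\le CH$.

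\textbf{Step 2: the remaining coordinates.} I will use the function $B$ of \eqref{eq:geom-B}, which satisfies $\|B\|_{L^2(\nu)}\le CH$. Writing $m-m'=B-(a-a')\lambda(t)$ with $\|\lambda(t)\|_\infty$ bounded gives $\|m-m'\|_{L^2(\nu)}\le C(H+v)\le CH$. The affine Gram identity then yields $|\Delta\beta_0|+\|\Delta\beta_s\|\le CH$.

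For the scale, I will use the variance identity \eqref{eq:embed-variance-difference}. Its left side has $L^2(\nu)$ norm at most $CH$, because $\|\Delta V\|\le CH$ and $|h(t)-h(t')|\le C\|\Delta t\|_\infty\le CH$. On the right, $|a^2-a'^2|\le 2Av\le CH$. Together these give $|\Delta\sigma^2|\le CH$, and since $\sigma\ge\sigma_-$, also $|\Delta\sigma|\le CH$. Finally $\rho=a/\sigma$ is Lipschitz on the bounded domain, so $|\Delta\rho|\le C(v+|\Delta\sigma|)\le CH$.

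Combining Steps 1 and 2 gives \eqref{eq:strong-inverse}. All constants come from the $\nu$-based lemmas together with $h_\nu\le\ell^{-1/2}H_f$, so they are uniform in $f$ and in the strong truths. The competitor $\theta'$ is never restricted, because only the strong endpoint's $G$ or $|a|$ was used.

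\textbf{Step 3: statistical consequences.} Take $\theta'=\widehat\theta_n$ in \eqref{eq:strong-inverse}. Then $\|\widehat\beta_s-\beta_s\|>t$ forces $H_f(\widehat\theta_n,\theta)>t/C$. Writing this threshold as $w/\sqrt n$ with $w=t\sqrt n/C$, the tail \eqref{eq:main-risk-tail} gives $Ae^{-cnt^2}$ once $w\ge w_*$, that is, once $nt^2\ge C_0$. The squared risk bound follows directly from \eqref{eq:main-risk}:
\[
 \E\|\widehat\beta_s-\beta_s\|^2\le C^2\,\E H_f^2(\widehat\theta_n,\theta)\le C/n .
\]

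\textbf{Main obstacle.} The delicate point is Step 1 in the case $|a|\ge a_0$ when $a'$ has the opposite sign or is near zero. This is already covered by the quadratic-form identity $a^2+aa'+a'^2=(a'+a/2)^2+3a^2/4$, which is bounded below regardless of $a'$. The other point requiring care is keeping the variance projection free of any $G$-dependence. It is, because $a^2-a'^2$ is controlled directly through $v$ rather than through the projection used for $Z$.
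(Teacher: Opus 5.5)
Your proposal is correct and follows the paper's proof essentially step for step. The steps are: linear control of $v=|a-a'|$ from either $vG^2\le CH$ or the lower bound on $a^2+aa'+a'^2$; then the function $B$ and the affine Gram identity for $\beta$; then the variance identity for $\sigma$ and hence $\rho$; and finally substitution of $\widehat\theta_n$ into the tail of Theorem~\ref{thm:estimator}. The only cosmetic difference is that you obtain the $C/n$ squared risk directly from \eqref{eq:main-risk} rather than by integrating the tail, and this is equally valid.
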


\begin{proof}
Put $D=H_f(\theta,\theta')$, $v=|a-a'|$, and $G=\|\gamma_s\|$.
Lemmas~\ref{lem:moments}--\ref{lem:projection} and the proof of
Theorem~\ref{thm:modulus} give
\begin{equation}
 \begin{split}
 |a^3-a'^3|&\le CD,\qquad vG^2\le CD,
 \qquad\|\gamma-\gamma'\|\le CD,\\
 \|B\|_{L^2(\nu)}&\le CD,
 \qquad B=m-m'+(a-a')\lambda(t),\qquad
 \|V-V'\|_{L^2(\nu)}\le CD.
 \end{split}
 \label{eq:strong-input}
\end{equation}
If $G\ge g_0$, the second bound gives $v\le Cg_0^{-2}D$.
If $|a|\ge a_0$, use the factorization
\[
 |a^3-a'^3|=v(a^2+aa'+a'^2)
 =v\{(a'+a/2)^2+3a^2/4\}
 \ge(3a_0^2/4)v
\]
to obtain $v\le CD$ again, even when the two correlations have
different signs. Since $\lambda(t)$ is uniformly bounded,
$\|m-m'\|_{L^2(\nu)}\le\|B\|_{L^2(\nu)}+Cv\le CD$.
The affine Gram identity
$\|c_0+c_s^Tx\|_{L^2(\nu)}^2=c_0^2+\|c_s\|^2/3$
then controls all components of $\beta-\beta'$.

The variance identity gives
\[
 \sigma^2-\sigma'^2
 =V-V'+(a^2-a'^2)h(t)+a'^2\{h(t)-h(t')\}.
\]
The right side has $L^2(\nu)$ norm at most $CD$, using
$|a^2-a'^2|\le2A v$, the bounded derivative of $h$, and the
selection-index bound. The left side is constant, so the lower
scale bound gives $|\sigma-\sigma'|\le CD$. Finally
$\rho=a/\sigma$ implies $|\rho-\rho'|\le CD$.
This proves \eqref{eq:strong-inverse} with only the first endpoint strong.

Set $\theta'=\widehat\theta_n$ in \eqref{eq:strong-inverse}
and apply Theorem~\ref{thm:estimator} to obtain
\eqref{eq:strong-tail}, including weak estimator outputs.
Integration, bounding the probability by one for $nt^2<C_0$,
gives squared risk at most $C/n$.
\end{proof}

The marginal selection slope $\gamma_s$ differs from the conditional
coefficient in
\[
 \Prob(R=1\mid X=x,Y=y)=\Phi(\alpha_0+\alpha_s^Tx+\alpha_y y),
\]
since
\begin{equation}
 \alpha_y=\frac{\rho}{\sigma\sqrt{1-\rho^2}},\qquad
 \alpha_s=\frac{\gamma_s-\rho\beta_s/\sigma}
 {\sqrt{1-\rho^2}}.
 \label{eq:conditional-selection-coefficients}
\end{equation}
The two strength conditions are not numerically identical, but compactness
gives $\|\alpha_s\|\le C(\|\gamma_s\|+|a|)$. Hence
$\|\alpha_s\|\ge c_0>0$ implies a fixed lower bound on either
$\|\gamma_s\|$ or $|a|$; Proposition~\ref{prop:strong} on their
union gives root-$n$ accuracy. Likewise, $|\rho|\ge\rho_0>0$
implies $|a|\ge\sigma_-\rho_0$.

\begin{proposition}[A continuous exclusion restriction]
\label{prop:exclusion}
Let the full design be $(X,W)\in[-1,1]^{d+d_W}$ with the same
fixed upper and positive lower bounds on its joint density, where
$d_W\ge1$. Suppose the known outcome restriction is
$m(x,w)=\beta_0+\beta_s^Tx$, so that $\beta_W=0$, and let
$t(x,w)=\gamma_0+\gamma_X^Tx+\gamma_W^Tw$.
For two endpoints sharing this known zero restriction, if the first
has $\|\gamma_W\|\ge g_W>0$, then
\begin{equation}
 |a-a'|\,\|\gamma_W\|\le C H_f(\theta,\theta'),
 \qquad \|\theta-\theta'\|\le C_{g_W}H_f(\theta,\theta').
 \label{eq:exclusion-inverse}
\end{equation}
A tournament constructed within the known zero-constraint model has
the root-$n$ tail \eqref{eq:strong-tail} for the target $X$ slopes.
The full affine-model tournament also has that target-slope tail at
these truths, even when its competing candidates have nonzero
outcome $W$ slopes.
\end{proposition}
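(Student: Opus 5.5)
The plan is to rerun the argument of Lemmas~\ref{lem:moments}--\ref{lem:projection} on the enlarged cube $[-1,1]^{d+d_W}$, with reference measure $\nu$ now uniform on that cube. The new ingredient is a test function that is orthogonal to every admissible outcome mean but not to $\lambda(t)$.

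First, Lemma~\ref{lem:moments} carries over verbatim to the enlarged design. Its proof used only bounded indices, the response-probability floor and the affine Gram matrix. It therefore gives $\|\Delta\gamma\|+\|\Delta M\|_{L^2(\nu)}+\|\Delta V\|_{L^2(\nu)}+|a^3-a'^3|\le CH$, with $H=H_f(\theta,\theta')$. In particular
\[
 \|B\|_{L^2(\nu)}\le CH,\qquad B=m-m'+(a-a')\lambda(t),
\]
where $t$ is the index at the first endpoint.

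\emph{Restricted model.} When both endpoints share $\beta_W=0$, the difference $m-m'$ is a function of $x$ alone. Choose $j$ with $|\gamma_{W,j}|\ge\|\gamma_W\|/\sqrt{d_W}$ and test against $w_j$. Under the product uniform law, $w_j$ is orthogonal to every function of $x$, so $\langle B,w_j\rangle_\nu=(a-a')\langle\lambda(t),w_j\rangle_\nu$. One integration by parts in $w_j$, with weight $(1-w_j^2)/2$ vanishing at $\pm1$, gives
\[
 \langle\lambda(t),w_j\rangle_\nu=\tfrac{\gamma_{W,j}}{2}\E_\nu[(1-W_j^2)\lambda'(t)].
\]
Since $\lambda'=-h\le-h_*<0$, its modulus is at least $h_*|\gamma_{W,j}|/3$. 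Cauchy--Schwarz then yields $|a-a'|\,\|\gamma_W\|\le CH$, which is the first inequality in \eqref{eq:exclusion-inverse}.

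If $\|\gamma_W\|\ge g_W$, this gives $v=|a-a'|\le CH/g_W$. From there I would copy the proof of Proposition~\ref{prop:strong} line by line:
\begin{itemize}
 \item $\|m-m'\|\le\|B\|+Cv$, and the affine Gram identity recovers $\beta$;
 \item the variance identity recovers $\sigma^2$, and hence $\rho=a/\sigma$;
 \item $\gamma$ is already controlled.
\end{itemize}
This proves the full linear inverse bound. The tournament claim within the known-zero model then follows as in Proposition~\ref{prop:strong}. Lemma~\ref{lem:packing} applies to the compact zero-constrained submodel, since its grid construction allows rectangular zero constraints. Lemma~\ref{lem:embedding} likewise holds for that submodel, with the embedding now including the $W$ coordinates of $\gamma$ and $g$. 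Theorem~\ref{thm:estimator} gives the $e^{-cw^2}$ Hellinger tail, and substituting it into the linear inverse bound gives \eqref{eq:strong-tail}.

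\emph{Full affine-model tournament.} I expect this step to be the main obstacle, because a competitor $\theta'$ may carry $\beta'_W\ne0$. Then $m-m'$ contains the term $-\beta_W'^Tw$, which is affine in $w$ and is not orthogonal to $w_j$. In that case I would instead test against $T_j(w)=\psi''(w_j)$, with $\psi(z)=(1-z^2)^2$, exactly as in Lemma~\ref{lem:projection}. This $T_j$ is orthogonal to all functions affine in $w$, including those with $x$-dependent coefficients, so it annihilates $m-m'$. Two integrations by parts give $\langle\lambda(t),T_j\rangle_\nu=\gamma_{W,j}^2\E_\nu[\kappa(t)\psi(W_j)]\ge(8/15)\kappa_*\gamma_{W,j}^2$. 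Hence $v\,g_W^2\le CH$, which is still linear in $H$, with a constant depending on $g_W$.

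Next, project $B$ onto affine functions of $(x,w)$. The $x$-slope block equals $\beta_s-\beta'_s+(a-a')c_x(t)$, where $c_x(t)$ is the $x$-slope of the affine projection of $\lambda(t)$ and is bounded by \eqref{eq:geom-affine-slope}. It follows that $\|\beta_s-\beta'_s\|\le C(H+v)\le C_{g_W}H$ against every competitor in the full class. Taking $\theta'=\widehat\theta_n$ from the full-class estimator of Theorem~\ref{thm:estimator} and using its Hellinger tail then yields the target-slope tail \eqref{eq:strong-tail}, even when the selected candidate has nonzero outcome $W$ slopes.
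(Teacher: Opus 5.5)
Your proposal is correct and follows essentially the paper's argument. In the restricted model, a mean-zero odd test function in $w_j$ (you use $w_j$ itself with weight $(1-w_j^2)/2$, the paper uses $\psi'(w_j)$ with weight $\psi(w_j)$) annihilates $m-m'$ and gives $|a-a'|\,\|\gamma_W\|\le CH$, after which the steps of Proposition~\ref{prop:strong} apply. For the full affine tournament, the paper notes that $\|(\gamma_X,\gamma_W)\|\ge g_W$ and invokes Proposition~\ref{prop:strong} in dimension $d+d_W$, whereas your second-order projection against $\psi''(w_j)$ is exactly the Lemma~\ref{lem:projection} mechanism behind that proposition, written out directly.
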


\begin{proof}
The moment calculation in total dimension $d+d_W$ gives
\[
 B(x,w)=m(x)-m'(x)+(a-a')\lambda(t(x,w)),
 \qquad\|B\|_{L^2(\nu)}\le CD,
 \quad D=H_f(\theta,\theta').
\]
Choose $j$ such that
$|\gamma_{W,j}|\ge\|\gamma_W\|/\sqrt{d_W}$ and set
$\psi(z)=(1-z^2)^2$ and $\zeta_j(w)=\psi'(w_j)$.
For the uniform reference measure,
\[
 \E_\nu\zeta_j=0,\qquad
 \|\zeta_j\|_{L^2(\nu)}^2=128/105,
 \qquad\E_\nu\psi(W_j)=8/15.
\]
Because both outcome means are independent of $W$,
$\langle m-m',\zeta_j\rangle_\nu=0$.
An integration by parts in $w_j$, with
$\psi(-1)=\psi(1)=0$, yields
\begin{align*}
 \langle\lambda(t),\zeta_j\rangle_\nu
 &=-\gamma_{W,j}\E_\nu[\lambda'(t)\psi(W_j)]\\
 &=\gamma_{W,j}\E_\nu[h(t)\psi(W_j)].
\end{align*}
Since $h_*:=\min_{|t|\le T}h(t)>0$, its absolute value is at
least $(8/15)h_*|\gamma_{W,j}|$. Cauchy--Schwarz therefore gives
the explicit bound
\[
 |a-a'|\,\|\gamma_W\|
 \le\frac{15\sqrt{d_W}}{8h_*}\sqrt{\frac{128}{105}}
       \|B\|_{L^2(\nu)}\le CD.
\]
This integration is with respect to $\nu$, not the possibly
nonsmooth actual $f\nu$; the norm comparison with $H_f$ was
already included in the moment bound. The fixed strength implies
$|a-a'|\le C_{g_W}D$, and the affine mean, variance, and scale
steps in the proof of Proposition~\ref{prop:strong} then prove the
second inequality in \eqref{eq:exclusion-inverse}.

The known-zero model is a fixed coordinate slice of the full box.
The grid on this slice retains Lemma~\ref{lem:packing}, and the
embedding and conditional tests restrict to it, giving the restricted
estimator.

For the full-model tournament,
$\|\gamma_s^{\rm total}\|=\|(\gamma_X,\gamma_W)\|\ge g_W$.
Proposition~\ref{prop:strong} in dimension $d+d_W$ gives root-$n$
accuracy against unrestricted competitors, hence for the target $X$
slopes, without cancelling their affine $W$ terms.
\end{proof}

The projection requires the shared zeros $\beta_W=\beta'_W=0$;
nonzero competing $W$ slopes destroy its cancellation. This is a
restriction of that proof, not a universal necessity: fixed nonzero
$\gamma_W$ permits the separate full-model argument above. Under exclusion,
$\gamma_W=\sqrt{1-\rho^2}\alpha_W$, so the fixed correlation bound
makes marginal and conditional exclusion strengths comparable.
Naming an exclusion variable or assuming marginal continuity alone
is insufficient.

\begin{corollary}[Support in fixed-strength subclasses]
\label{cor:regular-support}
Let $\Theta_*$ be one of the fixed-strength subclasses just described,
and define its separated class by $\Theta_*(b_n)=\Theta_*\cap
\Theta(b_n)$, selecting only its allowed target slopes. For each
prespecified $b_n$, if $\sqrt n\,b_n\to\infty$, thresholding at
$b_n/2$ gives uniform support consistency with error at most
$Ae^{-cnb_n^2}$ for $nb_n^2$ above a fixed constant.

For a matching converse, assume additionally that $\Theta_*$ contains
a parameter $\theta_0$ with all target slopes zero and an allowed
target coordinate $j$, such that replacing only $\beta_j=0$ by
$\beta_j=z$ remains in $\Theta_*$ for every sufficiently small
$z\ge0$. The other outcome slopes and the known exclusion zeros,
if present, remain zero. Then uniform exact support recovery on
$\Theta_*(b_n)$ is possible if and only if
$\sqrt n\,b_n\to\infty$, and its slope modulus has order $\delta$.
\end{corollary}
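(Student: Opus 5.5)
The proof has two parts: an upper bound from thresholding the full-model estimator, and a converse from a two-point testing argument at a fixed-strength point.

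\textbf{Sufficiency.} For each fixed-strength subclass $\Theta_*$ I combine the linear inverse bounds with the tail of Theorem~\ref{thm:estimator}. The subclasses are: strong selection slope or strong $a$ (Proposition~\ref{prop:strong}); a continuous exclusion with $\|\gamma_W\|\ge g_W$ (Proposition~\ref{prop:exclusion}); or a strong conditional coefficient, which reduces to the union case by the remark after \eqref{eq:conditional-selection-coefficients}. In each case the full-model estimator, or the restricted one in the known-zero case, satisfies $\sup\Prob\{\|\widehat\beta_s-\beta_s\|>t\}\le Ae^{-cnt^2}$ for $nt^2\ge C_0$, uniformly over $f$ and the strong truths. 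This is \eqref{eq:strong-tail}, applied with $\theta'=\widehat\theta_n$ unrestricted. Now I repeat the margin argument from the proof of Theorem~\ref{thm:support} with $t=b_n/4$ and threshold $b_n/2$. On the event $\|\widehat\beta_s-\beta_s\|\le b_n/4$, every zero target coordinate is estimated at magnitude at most $b_n/4$, and every nonzero one at magnitude at least $3b_n/4$. This gives error at most $Ae^{-cnb_n^2/16}$ once $nb_n^2\ge16C_0$, which tends to zero when $\sqrt n b_n\to\infty$. In the exclusion case the support refers only to the target $X$ slopes. The known zeros are excluded from $S$, so no extra argument is needed.

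\textbf{Converse.} Suppose $\sqrt n b_n$ does not diverge. Then along an infinite subsequence $I$, $\sqrt n b_n\le B$ and $b_n\to0$. I take the two endpoints $\theta_0$ and $\theta_0^{(z)}$, which differ only in $\beta_j$, with $z=b_n$. For large $n\in I$ both lie in $\Theta_*(b_n)$: their supports are $\varnothing$ and $\{j\}$, and the single nonzero slope equals $b_n$. The key estimate is a linear upper bound $H_f(\theta_0,\theta_0^{(z)})\le Cz$. I get it from \eqref{eq:embed-original-lipschitz} together with $H_f\le\sqrt u\,h_\nu$; this is just the Euclidean Lipschitz bound, since only one coordinate moves within the convex box. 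So $nH_f^2\le C^2uB^2$. The product-affinity bound \eqref{eq:support-affinity} and the two-point inequality \eqref{eq:support-lower} then give maximal support error at least $\tfrac14e^{-2C^2uB^2}>0$ along $I$, for every rule, even one that knows $f$.

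\textbf{Modulus of order $\delta$.} The same pair gives $\sup_{H_f\le\delta}|\beta_j-\beta_j'|\ge\delta/C$. The reverse inequality $\|\beta_s-\beta_s'\|\le CH_f$ holds whenever one endpoint is strong, by Proposition~\ref{prop:strong} or \eqref{eq:exclusion-inverse}.

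The main obstacle is checking that the lower bound path stays inside $\Theta_*$ and respects its constraints. For the strong-$a$ or strong-$g$ classes the strength is unaffected by changing $\beta_j$, so this is fine. In the exclusion and conditional-coefficient classes, however, $\alpha_s$ depends on $\beta_s$ through \eqref{eq:conditional-selection-coefficients}. That is exactly why the statement assumes the perturbation remains in $\Theta_*$ for small $z$. I would therefore invoke this hypothesis explicitly rather than derive it. I would also note that the Lipschitz bound needs the segment to lie in the convex original box, which holds because only $\beta_j$ moves.
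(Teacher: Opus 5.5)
Your proposal is correct and follows the paper's proof: sufficiency by thresholding at $b_n/2$ and applying \eqref{eq:strong-tail} at $t=b_n/4$, and necessity by the product-affinity two-point bound along the single-slope perturbation $\beta_j=0\mapsto\beta_j=b_n$ on a subsequence where $\sqrt n\,b_n\le B$, with \eqref{eq:strong-inverse} (or \eqref{eq:exclusion-inverse}) giving the matching modulus upper bound. The only difference is packaging: you obtain $H_f\le Cz$ from the original-parameter Lipschitz bound \eqref{eq:embed-original-lipschitz}, whereas the paper recomputes it from the closed-form Hellinger distance between complete-data Gaussians with common covariance and contraction under the parameter-free observation map, which is the same mechanism that proves that Lipschitz bound.
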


\begin{proof}
The threshold margin \eqref{eq:support-threshold} gives exact support,
including the empty support, when target-vector error is at most $b_n/4$.
Apply \eqref{eq:strong-tail} at $t=b_n/4$.

For the converse use the stipulated feasible slope path, fixing
$a,\sigma,t$ and changing the target outcome slope by $z$.
The complete variables $(Y,L)$, where $L=t+U$, have, conditional on
the design, common covariance
\[
 \Sigma=\begin{pmatrix}\sigma^2&\sigma\rho\\
                         \sigma\rho&1\end{pmatrix}
\]
and mean difference $(zx_j,0)^T$. Its smallest eigenvalue has a
fixed positive lower bound. For normals with a common covariance,
their squared Hellinger distance is
$2-2\exp\{-\Delta\mu^T\Sigma^{-1}\Delta\mu/8\}$, which is
at most $\Delta\mu^T\Sigma^{-1}\Delta\mu/4\le Cz^2$.
The observation map sends $(Y,L)$ to the missing atom for $L\le0$
and to the respondent outcome for $L>0$. It is parameter independent,
so Hellinger distance contracts under it. Integration over the
common design law gives $H_f^2(P_z,P_0)\le Cz^2$ uniformly in
$f$. This also proves a direct slope-modulus lower bound of order
$\delta$, by taking $z=\delta/\sqrt C$; the matching upper bound
is \eqref{eq:strong-inverse}.

If $\sqrt n\,b_n$ fails to diverge, choose an infinite subsequence
on which it is at most a finite $B$. The paths with $z=b_n$
are feasible eventually, have supports $\varnothing$ and $\{j\}$,
and belong to the exact separated subclass. They satisfy
$nH_f^2(P_{b_n},P_0)\le CB^2$. Equations
\eqref{eq:support-affinity}--\eqref{eq:support-lower}, with
$Cb_n^2$ in place of $C_0'b_n^3$, bound the maximal support error
below by $\tfrac14e^{-2CB^2}>0$ on that subsequence. Feasible perturbations are necessary for this argument; smaller
subclasses inherit only the upper bound. For restrictions on
$\alpha_s$, a strict baseline strength margin preserves the
restriction under sufficiently small slope perturbations.
\end{proof}

\begin{remark}[Known scale]\label{rem:known-scale}
The weak-selection class with fixed known $\sigma$ inherits the
$O_{\Prob}(n^{-1/3})$ slope upper bound and the sufficient support
condition $b_n\gg n^{-1/3}$ by restriction of the main theorems.
It does not contain the free-scale hard path, whose two scales move
differently. The sharp weak-selection modulus, statistical rate, and
support boundary for known scale are not determined here. The
fixed-strength root-$n$ results remain valid in that subclass.
\end{remark}

\section{Protocol and checks for the two-point experiment}\label{app:numerical}
\subsection{Fixed design and observed-data test}
The experiment uses the two endpoints in \eqref{eq:hardpath}, with $X$ uniform on $[-1,1]$, $t_0=0$, and $\sigma_0=1.3$. We use the fixed rectangular class
\[
 \beta_0,\beta_1,g\in[-1,1],\qquad \gamma_0\in[-0.5,0.5],\qquad
 \sigma\in[1,2],\qquad \rho\in[-0.9,0.9].
\]
This class contains the natural weak neighborhood. Every simulated endpoint is strictly inside these bounds, with $\rho>0$ and $\sigma^2(1-\rho^2)\ge\sigma_0^2/2$. The response probability conditional on $X=x$ is $\Phi(r_nx)$, so the overall response probability is exactly $1/2$ for both endpoints.

The choices $n\in\{512,2048,8192,32768\}$ and
\[
 r_n=0.6(n/512)^{-\alpha},\qquad
 \alpha\in\{1/4,1/6,1/12\},\qquad b_n=h(0)r_n^2
\]
were fixed before simulation, as were the master seed and replication count. Since $nb_n^3$ is proportional to $n^{1-6\alpha}$, these paths lie below, at, and above the separation order. These labels concern orders, not optimal leading constants. The three paths share their $n=512$ endpoints, giving ten distinct testing problems; the common baseline is generated once.

At endpoint $k$, independently draw $X\sim\mathrm{Unif}[-1,1]$ and independent standard normal $\varepsilon,Z$, set $U=\rho_k\varepsilon+\sqrt{1-\rho_k^2}Z$, and generate $(Y,R)$ from \eqref{eq:latent}. The test uses only the resulting observed sample. With $q_k$ denoting the respondent component in \eqref{eq:respondent}, its statistic is
\begin{equation}\label{eq:two-point-lr}
 T_n=\sum_{i:R_i=1}\log\frac{q_2(Y_i\mid X_i)}{q_1(Y_i\mid X_i)}.
\end{equation}
The common design density and identical nonresponse masses cancel. No unobserved response enters \eqref{eq:two-point-lr}. The rule selects endpoint 2 when $T_n\ge0$. Define $e_1=P_1^{\otimes n}(T_n\ge0)$, $e_2=P_2^{\otimes n}(T_n<0)$ and $e=(e_1+e_2)/2$. This is a simple-versus-simple, equal-prior testing problem with both endpoints supplied.

\subsection{Monte Carlo uncertainty and reproducibility}
Each distinct pair uses $M=2000$ independent data sets per endpoint.
The frozen design, independent random streams, runtime versions, error counts,
and all 40000 sample log-likelihood ratios are documented in the ancillary
\path{NUMERICAL_PROTOCOL_DETAILS.txt} and reproducibility files
(see \hyperref[sec:code-data]{Code and data availability}).

Writing $\widehat e_k$ for the observed error proportion at endpoint $k$, the reported average and estimated standard error are
\[
 \widehat e=\frac{\widehat e_1+\widehat e_2}{2},\qquad
 \widehat{\mathrm{se}}(\widehat e)=
 \left\{\frac{\widehat e_1(1-\widehat e_1)+\widehat e_2(1-\widehat e_2)}{4(M-1)}\right\}^{1/2}.
\]
We also form two-sided exact binomial intervals $[l_k,u_k]$ of confidence level $97.5\%$ separately for the two endpoint errors. The interval $[(l_1+l_2)/2,(u_1+u_2)/2]$ covers $e$ with probability at least $95\%$ by the union bound. These intervals are pointwise for each testing problem, not simultaneous over the scenario grid. Table~\ref{tab:two-point-results} includes every prespecified scenario. The estimated standard errors range from $0.00361$ to $0.00781$.

\renewcommand{\thetable}{H.\arabic{table}}
\setcounter{table}{0}
\begin{table}[tbp]
\centering
\caption{Known-endpoint discrimination on the prespecified paths. Error rates are proportions. The final column gives conservative pointwise Monte Carlo intervals with coverage at least $95\%$.}
\label{tab:two-point-results}
\normalsize
\setlength{\tabcolsep}{4pt}
\begin{tabular}{@{}lrrrrrrr@{}}
\toprule
Path & $n$ & $r_n$ & $nb_n^3$ & $\widehat e_1$ & $\widehat e_2$ & $\widehat e$ & MC interval \\
\midrule
Below & 512 & 0.600 & 6.163 & 0.277 & 0.310 & 0.293 & [0.271, 0.317] \\
 & 2048 & 0.424 & 3.082 & 0.339 & 0.359 & 0.349 & [0.325, 0.373] \\
 & 8192 & 0.300 & 1.541 & 0.394 & 0.392 & 0.393 & [0.368, 0.418] \\
 & 32768 & 0.212 & 0.770 & 0.420 & 0.426 & 0.423 & [0.398, 0.448] \\
\addlinespace
Boundary & 512 & 0.600 & 6.163 & 0.277 & 0.310 & 0.293 & [0.271, 0.317] \\
 & 2048 & 0.476 & 6.163 & 0.273 & 0.296 & 0.285 & [0.262, 0.308] \\
 & 8192 & 0.378 & 6.163 & 0.283 & 0.284 & 0.284 & [0.261, 0.307] \\
 & 32768 & 0.300 & 6.163 & 0.281 & 0.290 & 0.286 & [0.264, 0.309] \\
\addlinespace
Above & 512 & 0.600 & 6.163 & 0.277 & 0.310 & 0.293 & [0.271, 0.317] \\
 & 2048 & 0.535 & 12.327 & 0.210 & 0.222 & 0.216 & [0.196, 0.238] \\
 & 8192 & 0.476 & 24.653 & 0.131 & 0.122 & 0.126 & [0.110, 0.144] \\
 & 32768 & 0.424 & 49.307 & 0.055 & 0.055 & 0.055 & [0.044, 0.068] \\
\bottomrule
\end{tabular}
\end{table}

\subsection{Distance checks and interpretation}
The convention in \eqref{eq:hellinger} gives single-observation affinity $1-H_1^2/2$. For the product laws let $A_n=(1-H_1^2/2)^n$. The likelihood-ratio rule minimizes the equal-prior error, so $e=\{1-\TV(P_1^{\otimes n},P_2^{\otimes n})\}/2$. The inequalities $1-A_n\le\TV\le\sqrt{1-A_n^2}$ yield
\begin{equation}\label{eq:two-point-affinity}
 \frac{1-\sqrt{1-A_n^2}}{2}\le e\le\frac{A_n}{2}.
\end{equation}
Figure~\ref{fig:two-point} evaluates these bounds using Gauss--Legendre quadrature for $X$ and Gauss--Hermite quadrature for the response integral. All ten pairs were checked by doubling node counts, verifying unit mass, and directly evaluating affinity; detailed diagnostics are recorded in the ancillary \path{NUMERICAL_PROTOCOL_DETAILS.txt}.

The coefficient in \eqref{eq:path-limit} is $L(0,1.3)=0.0133482867$. For $r=0.1,0.05,0.025$, the computed ratios $H_1^2/(Lr^6)$ are $1.00458$, $1.00120$, and $1.00030$. All reported average-error intervals intersect the corresponding affinity bounds; no simulated sample produced a likelihood-ratio tie.

\section{Geometry comparison and nonuniform-design experiments}\label{app:additional-experiments}
\renewcommand{\thetable}{I.\arabic{table}}
\setcounter{table}{0}
\renewcommand{\thefigure}{I.\arabic{figure}}
\setcounter{figure}{0}

\subsection{Construction of the matched-gap comparison}
The design for Figure~\ref{fig:geometry-comparison} uses normalized uniform measure on $[-1,1]$, $t_0=0$, $\sigma_0=1.3$, and 20 equally spaced values of $\log r$ from $r=0.025$ to $r=0.3$. Write $\delta=h(0)r^2$. The first pair is exactly \eqref{eq:hardpath}. For each of the other two pairs, set
\[
 \begin{gathered}
 (\beta_{0,1},\beta_{1,1})=(0,0),\qquad
 (\beta_{0,2},\beta_{1,2})=(0,\delta),\\
 \gamma_{0,k}=g_k=0,\quad \sigma_k=1.3,\quad \rho_k=a/1.3.
 \end{gathered}
\]
with $a=0$ or $a=0.4$ held fixed across the two endpoints. Thus $g=0$ does not require $\rho=0$. Every pair lies inside the rectangular class in Appendix~\ref{app:numerical}. Its two nonresponse atoms are equal pointwise in $x$, so their contribution to $H_f^2$ is exactly zero.

We integrate the actual respondent subdensities using the Hellinger convention in \eqref{eq:hellinger}. The weak path has limiting normalization $H_f^2/\delta^3\to L(0,1.3)/h(0)^3=0.05173508$. For the fixed-nuisance zero-layer paths, let $q_a(y)$ be the respondent subdensity at outcome location zero, and define $J_a=\int q_a(y)\{\partial_y\log q_a(y)\}^2\,dy$. The location expansion and $E_\nu X^2=1/3$ give $H_f^2/\delta^2\to J_a/12$. For $a=0$, this is $1/(24\sigma_0^2)=0.02465483$; for $a=0.4$, numerical integration gives $0.02623677$. The former also has the exact independent check
\[
 H_f^2=\int_{-1}^1\left\{1-\exp\left(-\frac{\delta^2x^2}{8\sigma_0^2}\right)\right\}\,d\nu(x).
\]
All 60 path--grid combinations were checked by doubling the outer quadrature nodes, refining the inverse-Mills-ratio integral, and directly subtracting log-CDF values. The normal-location identity and observed-law masses were also checked. Node counts and discrepancy bounds are recorded in the ancillary \path{NUMERICAL_PROTOCOL_DETAILS.txt}.

All 19 consecutive log--log slopes per path are retained, with no selected fitting window. Their ranges are $[3.00035,3.01055]$ on the weak path, $[1.99989,2.00000]$ for $a=0$, and $[1.99988,2.00000]$ for $a=0.4$, after rounding. The files \texttt{geometry.csv} and \texttt{geometry\_local\_slopes.csv} contain all 60 quadrature rows and 57 consecutive-interval slopes.

\subsection{Nonuniform covariate designs and simulation protocol}
We retain the endpoints, sample sizes, parameter class, and three $r_n$ paths of Appendix~\ref{app:numerical}, and replace the uniform design by
\[
 f_{\mathrm{tilt}}(x)=1+0.6x,\qquad
 f_{\mathrm{edge}}(x)=0.5+1.5x^2,\qquad -1\le x\le1.
\]
These are densities relative to $\nu$, not Lebesgue measure. Both integrate to one; their ranges are $[0.4,1.6]$ and $[0.5,2]$, respectively. Hence both belong to $\mathcal F_{0.4,2}$. Their means are $0.2$ and $0$, and their second moments are $1/3$ and $7/15$. The first design breaks symmetry and the second puts more weight near the edges. The conditional response probability remains $\Phi(r_nx)$. Its design average equals $1/2$ for the symmetric edge-weighted design and exceeds $1/2$ for the tilted design; it is computed separately for every pair.

Exact inverse-CDF sampling, its checks, and the random-stream specification
are recorded in the ancillary \path{NUMERICAL_PROTOCOL_DETAILS.txt}.
Outcomes, selection, and the observed-data test follow Appendix~\ref{app:numerical}.
The follow-up design was fixed after the uniform experiment and before new
outcomes were generated; it was not externally preregistered. With ten distinct
pairs per design and $M=2000$ independent repetitions per endpoint, the reused
$n=512$ pair yields 24 displayed scenarios, 20 distinct pairs, and 80000
saved likelihood ratios. Every scenario is retained.

\subsection{Complete testing results and Monte Carlo intervals}
In addition to $e=(e_1+e_2)/2$, we report $e_{\max}=\max(e_1,e_2)$ for the specified likelihood-ratio rule. From the endpoint $97.5\%$ Clopper--Pearson intervals $[l_k,u_k]$, the interval $[\max(l_1,l_2),\max(u_1,u_2)]$ covers $e_{\max}$ with probability at least $95\%$, by the same union-bound argument used for the average. These guarantees are pointwise for each testing problem. The maximum is over the two supplied endpoints only; the equal-prior likelihood-ratio rule need not minimize this maximum.

Figure~\ref{fig:nonuniform} and Tables~\ref{tab:nonuniform-tilt}--\ref{tab:nonuniform-edge}
display every scenario. Both designs reproduce the uniform experiment's
qualitative separation pattern. The tables report endpoint-specific, average,
and maximum errors and intervals; the ancillary CSV retains unrounded values.

\begin{figure}[tbp]
\centering
\includegraphics[width=\linewidth]{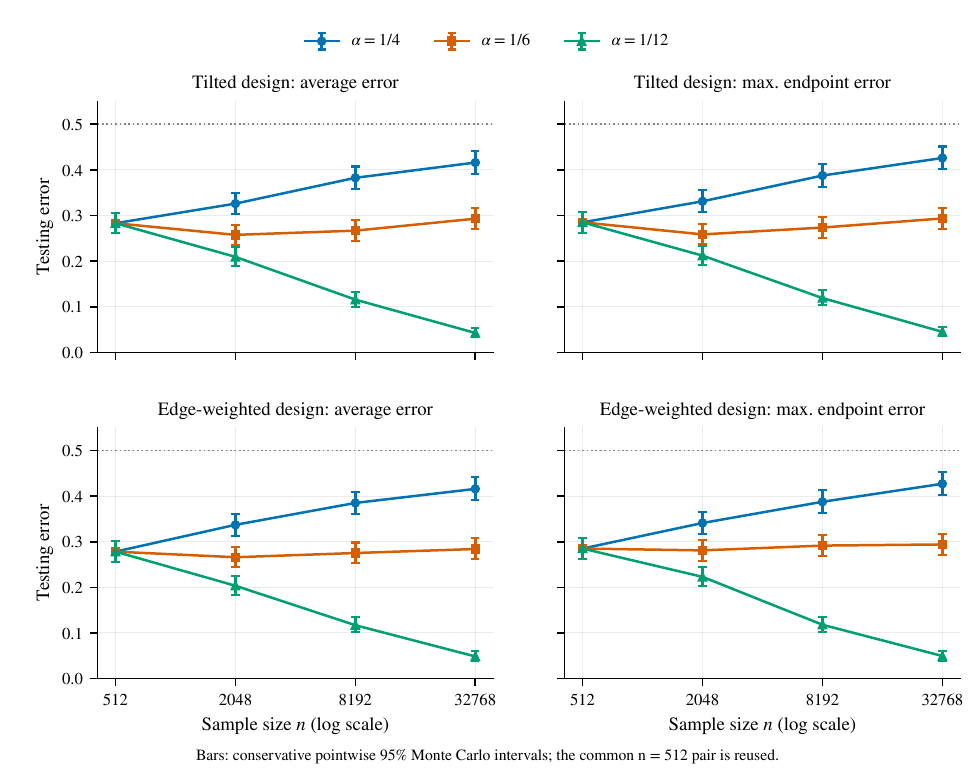}
\caption{Complete nonuniform-design results. Rows correspond to the two design densities; columns show estimates of equal-prior average error and of the maximum error over the two supplied endpoints. All three parameter paths and four sample sizes are displayed. Bars are conservative pointwise $95\%$ Monte Carlo intervals. The dotted line marks $1/2$ as a reference level, not an upper bound for the maximum endpoint error.}
\label{fig:nonuniform}
\end{figure}

\begin{table}[tbp]
\centering
\caption{Tilted design, $f(x)=1+0.6x$: all testing scenarios. Intervals are conservative pointwise $95\%$ Monte Carlo intervals.}
\label{tab:nonuniform-tilt}
\small\setlength{\tabcolsep}{3.5pt}
\begin{tabular}{crrrrrrr}
\toprule
$\alpha$ & $n$ & $\widehat e_1$ & $\widehat e_2$ & $\widehat e$ & Interval for $e$ & $\widehat e_{\max}$ & Interval for $e_{\max}$ \\
\midrule
$1/4$ & 512 & 0.285 & 0.281 & 0.283 & [0.261, 0.306] & 0.285 & [0.263, 0.308] \\
$1/4$ & 2048 & 0.331 & 0.321 & 0.326 & [0.303, 0.350] & 0.331 & [0.307, 0.355] \\
$1/4$ & 8192 & 0.388 & 0.378 & 0.383 & [0.358, 0.407] & 0.388 & [0.363, 0.412] \\
$1/4$ & 32768 & 0.426 & 0.406 & 0.416 & [0.391, 0.441] & 0.426 & [0.401, 0.451] \\
\addlinespace
$1/6$ & 512 & 0.285 & 0.281 & 0.283 & [0.261, 0.306] & 0.285 & [0.263, 0.308] \\
$1/6$ & 2048 & 0.259 & 0.257 & 0.258 & [0.236, 0.280] & 0.259 & [0.237, 0.281] \\
$1/6$ & 8192 & 0.260 & 0.274 & 0.267 & [0.245, 0.290] & 0.274 & [0.251, 0.296] \\
$1/6$ & 32768 & 0.292 & 0.293 & 0.293 & [0.270, 0.316] & 0.293 & [0.271, 0.317] \\
\addlinespace
$1/12$ & 512 & 0.285 & 0.281 & 0.283 & [0.261, 0.306] & 0.285 & [0.263, 0.308] \\
$1/12$ & 2048 & 0.206 & 0.212 & 0.209 & [0.189, 0.230] & 0.212 & [0.192, 0.233] \\
$1/12$ & 8192 & 0.112 & 0.119 & 0.115 & [0.100, 0.132] & 0.119 & [0.103, 0.136] \\
$1/12$ & 32768 & 0.040 & 0.045 & 0.042 & [0.033, 0.054] & 0.045 & [0.035, 0.057] \\
\bottomrule
\end{tabular}
\end{table}

\begin{table}[tbp]
\centering
\caption{Edge-weighted design, $f(x)=0.5+1.5x^2$: all testing scenarios. Intervals are conservative pointwise $95\%$ Monte Carlo intervals.}
\label{tab:nonuniform-edge}
\small\setlength{\tabcolsep}{3.5pt}
\begin{tabular}{crrrrrrr}
\toprule
$\alpha$ & $n$ & $\widehat e_1$ & $\widehat e_2$ & $\widehat e$ & Interval for $e$ & $\widehat e_{\max}$ & Interval for $e_{\max}$ \\
\midrule
$1/4$ & 512 & 0.272 & 0.285 & 0.278 & [0.256, 0.301] & 0.285 & [0.263, 0.308] \\
$1/4$ & 2048 & 0.333 & 0.341 & 0.337 & [0.313, 0.361] & 0.341 & [0.317, 0.365] \\
$1/4$ & 8192 & 0.388 & 0.383 & 0.385 & [0.361, 0.410] & 0.388 & [0.363, 0.412] \\
$1/4$ & 32768 & 0.427 & 0.405 & 0.416 & [0.391, 0.441] & 0.427 & [0.402, 0.452] \\
\addlinespace
$1/6$ & 512 & 0.272 & 0.285 & 0.278 & [0.256, 0.301] & 0.285 & [0.263, 0.308] \\
$1/6$ & 2048 & 0.251 & 0.281 & 0.266 & [0.244, 0.289] & 0.281 & [0.259, 0.304] \\
$1/6$ & 8192 & 0.259 & 0.291 & 0.275 & [0.253, 0.298] & 0.291 & [0.269, 0.315] \\
$1/6$ & 32768 & 0.294 & 0.274 & 0.284 & [0.262, 0.307] & 0.294 & [0.271, 0.317] \\
\addlinespace
$1/12$ & 512 & 0.272 & 0.285 & 0.278 & [0.256, 0.301] & 0.285 & [0.263, 0.308] \\
$1/12$ & 2048 & 0.184 & 0.223 & 0.204 & [0.184, 0.224] & 0.223 & [0.202, 0.245] \\
$1/12$ & 8192 & 0.116 & 0.118 & 0.117 & [0.101, 0.134] & 0.118 & [0.102, 0.135] \\
$1/12$ & 32768 & 0.048 & 0.050 & 0.049 & [0.038, 0.060] & 0.050 & [0.039, 0.061] \\
\bottomrule
\end{tabular}
\end{table}

\subsection{Numerical diagnostics and reproducibility}
Quadrature weights include the design density relative to $\nu$. For all 20 distinct pairs, checks compare doubled node counts, unit mass, and direct affinity with $1-H_f^2/2$. All average-error intervals intersect the design-specific bounds in \eqref{eq:two-point-affinity}; no likelihood-ratio tie occurs. Simulated mean log-likelihood ratios agree with quadrature within $2.01$ Monte Carlo standard errors. Detailed diagnostics are recorded in \path{NUMERICAL_PROTOCOL_DETAILS.txt}.

Table~\ref{tab:nonuniform-checks} gives $nH_f^2$ and theoretical and empirical
response rates for every scenario. The ancillary CSV also includes exact
error counts, $r_n$, $b_n$, $nb_n^3$, endpoint intervals, affinity bounds,
mean-statistic checks, and timings. Reproduction commands are provided in
the ancillary \path{REPRODUCIBILITY.txt}.

\begin{table}[!htbp]
\centering
\caption{Distance and response-rate checks for all nonuniform scenarios, with the two designs shown side by side. $p_R$ is the quadrature response probability; $\widehat p_{R,k}$ uses all generated observations at endpoint $k$.}
\label{tab:nonuniform-checks}
\small\setlength{\tabcolsep}{3.5pt}
\begin{tabular}{crrrrrrrrr}
\toprule
 & & \multicolumn{4}{c}{Tilted design} & \multicolumn{4}{c}{Edge-weighted design} \\
\cmidrule(lr){3-6}\cmidrule(lr){7-10}
$\alpha$ & $n$ & $nH_f^2$ & $p_R$ & $\widehat p_{R,1}$ & $\widehat p_{R,2}$ & $nH_f^2$ & $p_R$ & $\widehat p_{R,1}$ & $\widehat p_{R,2}$ \\
\midrule
$1/4$ & 512 & 0.3494 & 0.5462 & 0.5463 & 0.5461 & 0.3376 & 0.5000 & 0.5001 & 0.5000 \\
$1/4$ & 2048 & 0.1821 & 0.5333 & 0.5331 & 0.5332 & 0.1789 & 0.5000 & 0.5000 & 0.4998 \\
$1/4$ & 8192 & 0.0899 & 0.5237 & 0.5237 & 0.5240 & 0.0902 & 0.5000 & 0.4999 & 0.4998 \\
$1/4$ & 32768 & 0.0437 & 0.5168 & 0.5168 & 0.5168 & 0.0447 & 0.5000 & 0.5000 & 0.5001 \\
\addlinespace
$1/6$ & 512 & 0.3494 & 0.5462 & 0.5463 & 0.5461 & 0.3376 & 0.5000 & 0.5001 & 0.5000 \\
$1/6$ & 2048 & 0.3621 & 0.5372 & 0.5374 & 0.5373 & 0.3535 & 0.5000 & 0.5001 & 0.5004 \\
$1/6$ & 8192 & 0.3640 & 0.5297 & 0.5296 & 0.5297 & 0.3601 & 0.5000 & 0.5000 & 0.5000 \\
$1/6$ & 32768 & 0.3595 & 0.5237 & 0.5237 & 0.5236 & 0.3607 & 0.5000 & 0.4999 & 0.5000 \\
\addlinespace
$1/12$ & 512 & 0.3494 & 0.5462 & 0.5463 & 0.5461 & 0.3376 & 0.5000 & 0.5001 & 0.5000 \\
$1/12$ & 2048 & 0.7143 & 0.5415 & 0.5412 & 0.5415 & 0.6936 & 0.5000 & 0.5000 & 0.4994 \\
$1/12$ & 8192 & 1.4482 & 0.5372 & 0.5370 & 0.5373 & 1.4141 & 0.5000 & 0.4998 & 0.4999 \\
$1/12$ & 32768 & 2.9133 & 0.5333 & 0.5333 & 0.5332 & 2.8625 & 0.5000 & 0.5000 & 0.5000 \\
\bottomrule
\end{tabular}
\end{table}

\aliaslabel{eq:F}{eq:overview-F}
\aliaslabel{eq:main-embedding}{eq:embed-two-sided}

% Reuse main-text definitions in Appendix E without repeating displays.
\aliaslabel{eq:test-definition}{eq:overview-test}
\aliaslabel{eq:test-packing}{eq:overview-packing}
\aliaslabel{eq:test-tournament}{eq:overview-estimator}

% Reuse main-text coordinates and support rule in Appendices D and F.
\aliaslabel{eq:embed-coordinates}{eq:overview-coordinates}
\aliaslabel{eq:embed-F}{eq:overview-F}
\aliaslabel{eq:support-threshold}{eq:threshold-rule}

\section*{Code and data availability}\label{sec:code-data}
The proofs and numerical protocols are included in Appendices A--I of this
preprint. Programs and data for its numerical exhibits are included as ancillary
files, under \path{anc/reproducibility/} in the source archive. They contain the
fixed uniform and two nonuniform designs, all 120000 saved likelihood-ratio
statistics, the quadrature results, and the corresponding numerical checks.
The guide \path{REPRODUCIBILITY.txt} lists files, dependencies, and reproduction
commands; \path{NUMERICAL_PROTOCOL_DETAILS.txt} records random streams,
runtime versions, exact nonuniform sampling, and quadrature diagnostics.
Running \path{run_reproduction.py} in \texttt{stored} mode recomputes summaries,
quadrature, figures, and tables from the saved statistics; \texttt{fresh} mode
regenerates the Monte Carlo samples from the fixed random streams.
The scope of these experiments is stated in Section~\ref{sec:numerical}.

\begin{funding}
This research received no funding.
\end{funding}

\section*{Competing interests}
The author declares no competing interests.

\bibliographystyle{imsart-nameyear}
\bibliography{references}
\end{document}